\DeclareSymbolFont{bbold}{U}{bbold}{m}{n}
\DeclareSymbolFontAlphabet{\mathbbm}{bbold}
\title{Tameness for set theory $I$}
\author{Matteo Viale}
\thanks{The author acknowledge support from INDAM through GNSAGA and from the project:
\emph{PRIN 2017-2017NWTM8R
Mathematical Logic: models, sets, computability.
\textbf{MSC:} \emph{03E35 03E57 03C25}.}
}
\theoremstyle{plain}
	\newtheorem{Theorem}{Theorem}
	\newtheorem{Fact}{Fact}
	\newtheorem{Notation}{Notation}
	\newtheorem{Remark}{Remark}
	\newtheorem{theorem}{Theorem}[section]
	\newtheorem{proposition}[theorem]{Proposition}
	\newtheorem{lemma}[theorem]{Lemma}
	\newtheorem{corollary}[theorem]{Corollary}
	\newtheorem{fact}[theorem]{Fact}
	\newtheorem{claim}{Claim}
\theoremstyle{definition}
	\newtheorem{Definition}{Definition}
	\newtheorem{definition}[theorem]{Definition}
	\newtheorem{notation}[theorem]{Notation}
\theoremstyle{remark}
	\newtheorem{remark}[theorem]{Remark}
\newcommand{\ZFC}{\ensuremath{\mathsf{ZFC}}}
\newcommand{\ZF}{\ensuremath{\mathsf{ZF}}}
\newcommand{\WFE}{\ensuremath{\mathsf{WFE}}}
\DeclareMathOperator{\dom}{dom}
\DeclareMathOperator{\ran}{ran}
\DeclareMathOperator{\otp}{otp}
\DeclareMathOperator{\Coll}{Coll}
\newcommand{\maxUB}{\ensuremath{
{\mathbf{MAX}(\mathsf{UB})}}}
\newcommand{\Pmax}{\ensuremath{\mathbb{P}_{\mathrm{max}}}}
\newcommand{\NS}{\ensuremath{\mathbf{NS}}} 
\newcommand{\stUB}{\ensuremath{(*)\text{-}\mathsf{UB}}}
\newcommand{\stA}{\ensuremath{(*)\text{-}\mathcal{A}}}
\newcommand{\bool}[1]{\mathsf{#1}}
\newcommand{\tow}[1]{\mathcal{#1}}
\newcommand{\pow}[1]{\mathcal{P}\left(#1\right)}
\newcommand{\ap}[1]{\langle #1 \rangle}
\newcommand{\bp}[1]{\left\lbrace #1 \right\rbrace}
\newcommand{\Cod}{\ensuremath{\text{{\rm Cod}}}}
\newcommand{\ST}{\ensuremath{\text{{\sf ST}}}}
\newcommand{\UB}{\ensuremath{\text{{\sf UB}}}}
\newcommand{\lUB}{\ensuremath{\text{{\rm l-UB}}}}
\newcommand{\MM}{\ensuremath{\text{{\sf MM}}}}
\begin{document}

\begin{abstract}
The paper is a first of two and aims to show that (assuming large cardinals) set theory is 
a tractable (and we dare to say tame) 
first order theory when formalized in a first order signature with natural predicate symbols for the basic 
definable concepts of second and third order arithmetic, and appealing
to the model-theoretic notions of model completeness and model companionship. 

Specifically we develop a general framework linking 
generic absoluteness results to model companionship and
show that (with the required care in details) 
a $\Pi_2$-property formalized in an appropriate language for second or third order number theory 
is forcible from some 
$T\supseteq\ZFC+$\emph{large cardinals}
if and only if it is consistent with the universal fragment of $T$
if and only if it is realized in the model companion of $T$.

The paper is accessible to any person who has a fair acquaintance with set theory
and first order logic at the level of an under-graduate course in both topics; however bizarre this may appear (given the results we aim to prove) 
no knowledge of forcing or large cardinals is required to get the proofs of its main results
(if one accepts as black-boxes the relevant generic absoluteness results).
On the other hand familiarity with the notions of model completeness and model companionship is essential.
All the necessary model-theoretic background will be given in full detail.

The present work
expands and systematize
previous results obtained with Venturi. 
 \end{abstract}

\maketitle


The key model-theoretic result of this paper is 
that the definable\footnote{According to \cite[Ch. I.13]{KUNEN}.} (and conservative) extension of any $T\supseteq\ZFC$ introducing predicates for the $\Delta_0$-definable (class) relations, function symbols for the $\Delta_0$-definable (class) functions, and predicates for the
lightface definable projective
subsets of $\pow{\kappa}$ has as \emph{model companion} the $T$-provable 
fragment of the theory of 
$H_{\kappa^+}$ in this signature (cfr. Thm. \ref{Thm:mainthm-1}).

We also give evidence that any existence proof of the model companion of some $T$ extending 
$\ZFC+$\emph{large cardinals} comes in pairs with generic absoluteness results for $T$.

Specifically we use Thm. \ref{Thm:mainthm-1} (and variations of it) 
to show that these results couple perfectly with Woodin's generic absoluteness
for second order number theory (cfr Thm. \ref{Thm:mainthm-3}),  the theory of $H_{\aleph_2}$ assuming 
Woodin's axiom $(*)$ (cfr Thm. \ref{Thm:mainthm-4}, Thm. \ref{Thm:mainthm-7}, Thm. \ref{Thm:mainthm-6}), and the author's generic absoluteness results for the theory of $H_{\aleph_2}$ (cfr Thm. \ref{Thm:mainthm-5}).

We proceed stating our main results.

\begin{Notation}\label{not:modthnot}
Let $T$ be a $\tau$-theory.
$T_\forall$ is the family of $\Pi_1$-sentences\footnote{A $\Pi_n$-formula for $\tau$ relative to $T$
is a $\tau$-formula admitting a $T$-equivalent formula
in prenex normal form with blocks of alternating quantifiers with $\forall$ being its leftmost 
quantifier. Accordingly we define $\Sigma_n$-formulae relative to $T$.
The $\Delta_n$-formulae relative to $T$ are those which are both $\Pi_n$ and $\Sigma_n$. 
We also say universal $\tau$-formula for a $\Pi_1$-formula for $\tau$ and existential $\tau$-formula 
for a $\Sigma_1$-formula for $\tau$.} 
$\psi$ for $\tau$ which are provable 
from $T$.
Accordingly we define $T_\exists$, $T_{\forall\exists}$, etc.
\end{Notation}

Let $\tau_\ST$ be a signature containing predicate symbols 
$R_\psi$ of arity $m$ for all bounded\footnote{A formula is bounded if all its quantifiers are bounded to range over its free variables or constant symbols (see \cite[Def. IV.3.5]{KUNEN}).} 
$\in$-formulae $\psi(x_1,\dots,x_m)$, function symbols
$f_\theta$ of arity $k$ for for all bounded
$\in$-formulae $\theta(y,x_1,\dots,x_k)$, constant symbols 
$\omega$ and $\emptyset$.
$\ZFC_\ST\supseteq\ZFC$ is the $\tau_\ST$-theory
obtained adding axioms which force in each of its $\tau_\ST$-models $\emptyset$ to be interpreted by the empty set,
$\omega$ to be interpreted by the first infinite ordinal,
each $R_\psi$ as the class of $k$-tuples defined by the bounded formula
$\psi(x_1,\dots,x_k)$, each $f_\theta$ as the $l$-ary class function whose graph
is the extension of the bounded formula $\theta(x_1,\dots,x_l,y)$
(whenever $\theta$ defines a functional relation).
Essentially $\ZFC_\ST$ is set theory axiomatized in
a language admitting predicate symbols for $\Delta_0$-predicates, 
$\Delta_0$-definable functions, and a constant for the first infinite cardinal
(see Notation \ref{not:keynotation} and Fact \ref{fac:basicfact} below for details).

Let $\sigma_\ST$ be a signature containing predicate symbols $S_\psi$ of arity $n$ for all 
$\tau_\ST$-formulae $\phi(x_1,\dots,x_n)$; let 
Let $\sigma_\omega=\sigma_\ST\cup\tau_\ST$. 
$\ZFC^*_\omega\supseteq\ZFC_\ST$ is the $\sigma_\omega$-theory
obtained adding axioms which force in each of its $\sigma_\omega$-models
each predicate symbol $S_\phi$ of arity $n$ to be interpreted as the subset of 
$\pow{\omega^{<\omega}}^n$ defined by\footnote{For a set or definable class 
$Z$ and a $\tau_\ST$-formula $\psi$, 
$\psi^Z$ denotes the $\tau_\ST$-formula obtained from $\psi$ requiring all its quantifiers 
to range over $Z$.}  the $\tau_\ST$-formula
$\phi^{\pow{\omega^{<\omega}}}(x_1,\dots,x_n)$.
Essentially $\ZFC^*_\omega$ extends $\ZFC_\ST$
adding predicate symbols for the lightface definable projective sets\footnote{We decide to use $\pow{\omega^{<\omega}}$ rather than $\pow{\omega}$
(or any other uncountable Polish space)
to simplify slightly the coding devices we are going to implement to prove Thm. \ref{Thm:mainthm-3}. 
Similar considerations brings us to focus on $\pow{\omega_1^{<\omega}}$ rather than 
$\pow{\omega_1}$ in the formulation of Thm. \ref{Thm:mainthm-5} and on 
$\pow{\kappa^{<\omega}}$ rather than $\pow{\kappa}$ in the formulation of Thm. \ref{Thm:mainthm-1}. At the prize of complicating slightly the relevant proofs one can choose to replace $\alpha^{<\omega}$ by $\alpha$ all over for $\alpha$ any among $\omega,\omega_1,\kappa$.} (again see Notation \ref{not:keynotation} and Fact \ref{fac:basicfact} below for details).

\begin{Theorem}\label{Thm:mainthm-3}
Let $T$ be a $\sigma_\omega$-theory such that\footnote{It is not relevant for this paper to define Woodin cardinals. A definition is given in \cite[Def. 1.5.1]{STATLARSON}, for example.}
\[
T\supseteq \ZFC^*_\omega+\text{ there are class many Woodin cardinals}.
\]
Then $T$ has a model companion $T^*$.
Moreover TFAE for any
$\Pi_2$-sentence $\psi$ for $\sigma_\omega$:
\begin{enumerate}
\item \label{Thm:mainthm-3-2}
For all universal $\sigma_{\omega}$-sentences $\theta$ such that
$T+\theta$ is consistent,  so is $T_\forall+\theta+\psi$;
\item \label{Thm:mainthm-3-3}
$T$ proves that some forcing notion $P$ forces
$\psi^{\dot{H}_{\omega_1}}$;
\item \label{Thm:mainthm-3-1}
$T\vdash \psi^{H_{\omega_1}}$;
\item \label{Thm:mainthm-3-4}
$\psi\in T^*$.
\end{enumerate}
\end{Theorem}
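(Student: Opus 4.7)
The plan is to derive the existence of $T^*$ and the equivalence \eqref{Thm:mainthm-3-1}$\Leftrightarrow$\eqref{Thm:mainthm-3-4} directly from Theorem \ref{Thm:mainthm-1} applied with $\kappa=\omega$: the signature $\sigma_\omega$ is (a minor variant of) the signature considered there, so $T$ has a model companion $T^*$, namely the $T$-provable fragment of $\mathrm{Th}_{\sigma_\omega}(H_{\omega_1})$. In particular $\psi\in T^*$ iff $T\vdash\psi^{H_{\omega_1}}$.

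The implication \eqref{Thm:mainthm-3-1}$\Rightarrow$\eqref{Thm:mainthm-3-3} is trivial, via the trivial forcing. For its converse \eqref{Thm:mainthm-3-3}$\Rightarrow$\eqref{Thm:mainthm-3-1} we invoke the large-cardinal hypothesis: by Woodin's generic absoluteness under class many Woodin cardinals, $\mathrm{Th}(L(\mathbb{R}))$ is invariant under set forcing; since the non-$\Delta_0$ predicates of $\sigma_\omega$ are lightface projective, the $\sigma_\omega$-theory of $H_{\omega_1}$ is controlled by $\mathrm{Th}(L(\mathbb{R}))$, and so $\psi^{H_{\omega_1}}$ is forcing-invariant---any forcing-forced instance of it already holds in $T$.

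For \eqref{Thm:mainthm-3-4}$\Rightarrow$\eqref{Thm:mainthm-3-2} the plan is to use the natural substructure $H_{\omega_1}^M\subseteq M$. Given $\psi\in T^*$ (i.e.\ $T\vdash\psi^{H_{\omega_1}}$) and a universal $\theta$ with $T+\theta$ consistent, fix any $M\models T+\theta$. Equipped with its natural $\sigma_\omega$-interpretation---the $\Delta_0$-predicates and functions inherited from $M$ by bounded absoluteness, and the projective predicates $S_\phi$ agreeing between $H_{\omega_1}^M$ and $M$ by projective absoluteness (granted by the Woodin cardinals)---$H_{\omega_1}^M$ is a $\sigma_\omega$-substructure of $M$. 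It satisfies $\psi$ (since $M$ satisfies $\psi^{H_{\omega_1}}$) and inherits $T_\forall$ and $\theta$ (both universal, hence preserved in substructures). So $H_{\omega_1}^M$ is a model of $T_\forall+\theta+\psi$.

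The remaining direction \eqref{Thm:mainthm-3-2}$\Rightarrow$\eqref{Thm:mainthm-3-1} is the technical crux. By contraposition: assuming $T\not\vdash\psi^{H_{\omega_1}}$, the plan is to extract from a model $M$ of $T+\neg\psi^{H_{\omega_1}}$ a universal $\sigma_\omega$-sentence $\theta$ with $T+\theta$ consistent yet $T_\forall+\theta+\psi$ inconsistent, contradicting \eqref{Thm:mainthm-3-2}. Using the model-completeness of $T^*$ supplied by Theorem \ref{Thm:mainthm-1}, $\neg\psi$ is $T^*$-equivalent to some $\Sigma_1$-sentence $\exists\vec z\,\chi(\vec z)$; the candidate for $\theta$ is a parameter-free universal $\sigma_\omega$-sentence encoding this failure of $\psi$ inside $H_{\omega_1}$, leveraging the projective predicates $S_\phi$ and the $\Delta_0$-functions on $\omega^{<\omega}$ to eliminate parameters. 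The hard part will be establishing $T$-consistency of $\theta$ from the mere assumption $\psi\notin T^*$ (as opposed to the stronger $T\vdash\neg\psi^{H_{\omega_1}}$): this is precisely where the model-completeness of $T^*$, the expressive strength of $\sigma_\omega$, and Woodin's generic absoluteness interact most delicately, and it is the step where the argument truly goes beyond standard model-companion machinery.
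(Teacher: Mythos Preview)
Your handling of the existence of $T^*$, the equivalence \eqref{Thm:mainthm-3-1}$\Leftrightarrow$\eqref{Thm:mainthm-3-4}, and the equivalence \eqref{Thm:mainthm-3-3}$\Leftrightarrow$\eqref{Thm:mainthm-3-1} via Woodin's generic absoluteness is exactly what the paper does. Your direct argument for \eqref{Thm:mainthm-3-4}$\Rightarrow$\eqref{Thm:mainthm-3-2} is also fine (it is essentially the proof of one half of Lemma~\ref{fac:proofthm1-2}(\ref{fac:proofthm1-2-b})).

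The problem is your treatment of \eqref{Thm:mainthm-3-2}$\Rightarrow$\eqref{Thm:mainthm-3-1}. You call this ``the technical crux'' and propose an ad hoc construction of a universal $\theta$ that you yourself flag as incomplete. But this implication is \emph{already contained in Theorem~\ref{Thm:mainthm-1}}: condition~(3) there is precisely (a reformulation of) condition~\eqref{Thm:mainthm-3-2} here, and Theorem~\ref{Thm:mainthm-1} asserts it is equivalent to $T\vdash\psi^{H_{\kappa^+}}$. The paper's entire proof of Theorem~\ref{Thm:mainthm-3} is two lines: Theorem~\ref{Thm:mainthm-1} gives \eqref{Thm:mainthm-3-2}$\Leftrightarrow$\eqref{Thm:mainthm-3-1}$\Leftrightarrow$\eqref{Thm:mainthm-3-4}, and generic absoluteness gives \eqref{Thm:mainthm-3-3}$\Leftrightarrow$\eqref{Thm:mainthm-3-1}.

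There is a minor wrinkle you may have stumbled on: Theorem~\ref{Thm:mainthm-1}(3) is phrased with ``$T_\forall+\theta$ consistent'' while \eqref{Thm:mainthm-3-2} says ``$T+\theta$ consistent''. These are bridged by compactness and Lemma~\ref{fac:proofthm1-2}(\ref{fac:proofthm1-2-b}) (which is what the proof of Theorem~\ref{Thm:mainthm-1} actually invokes): $\psi\in T^*$ iff $S_\forall+\psi$ is consistent for every complete $S\supseteq T$, and by compactness this is exactly condition~\eqref{Thm:mainthm-3-2}. So no new argument is needed; your proposed extraction of $\theta$ via a $\Sigma_1$-equivalent of $\neg\psi$ and ``eliminating parameters via projective predicates'' is both unnecessary and, as you suspected, not clearly workable. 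The genuine model-companion machinery (Lemma~\ref{fac:proofthm1-2}, Fact~\ref{fac:charkaihullnonpi1comp}) already does the job.
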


\begin{Theorem}\label{Thm:mainthm-4}
Let $\sigma_{\omega,\NS_{\omega_1}}$ be
the extension of $\sigma_\omega$ with a unary predicate symbol $\NS_{\omega_1}$ and a 
constant symbol $\omega_1$.
Consider the $\bp{\in,\omega_1,\NS_{\omega_1}}$-sentences:
\[
\theta_{\omega_1}\equiv\omega_1\text{ is the first uncountable cardinal},
\]
\[
\theta_{\mathrm{Stat}}\equiv\forall x\,(x\subseteq\omega_1\text{ is non-stationary}\leftrightarrow \NS_{\omega_1}(x)).
\]
Let $\ZFC^*_{\omega,\NS_{\omega_1}}$ be the theory
\[
\ZFC^*_\omega+\theta_{\mathrm{Stat}}+\theta_{\omega_1}.
\]

Let also $\theta_{\bool{SC}}$ be the $\in$-sentence:
\[
\text{There are class many supercompact cardinals}.
\]

TFAE for any 
\[
T\supseteq \ZFC^*_{\omega,\NS_{\omega_1}}+\theta_{\bool{SC}}
\]
and for any
$\Pi_2$-sentence $\psi$ for $\sigma_{\omega,\NS_{\omega_1}}$:
\begin{enumerate}
\item \label{Thm:mainthm-4-2}
For all universal $\sigma_{\omega,\NS_{\omega_1}}$-sentences $\theta$
such that $T+\theta$ is consistent, so is $T_\forall+\theta+\psi$;
\item \label{Thm:mainthm-4-3}
$T$ proves that some forcing notion $P$ forces 
$\psi^{\dot{H}_{\omega_2}}$;
\item \label{Thm:mainthm-4-1}
$T_\forall+\ZFC^*_{\omega,\NS_{\omega_1}}+\theta_{\bool{SC}}+
\stUB\vdash \psi^{H_{\omega_2}}$.
\end{enumerate}
\end{Theorem}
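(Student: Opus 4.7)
The proof lifts the strategy of Theorem \ref{Thm:mainthm-3} from the $H_{\omega_1}$-level to the $H_{\omega_2}$-level, with $\stUB$ replacing Woodin's projective generic absoluteness and with $\theta_{\bool{SC}}$ providing the large cardinals needed to force $\stUB$. Two black-box inputs drive the argument: (a) $\stUB$-generic absoluteness for the $\Pi_2$-theory of $H_{\omega_2}$ in the signature $\sigma_{\omega,\NS_{\omega_1}}$, which lets us read off $\psi^{H_{\omega_2}}$-truth in a model from $\psi^{H_{\omega_2}}$-truth in a suitable forcing extension; and (b) the fact that over any $V\models\theta_{\bool{SC}}$ there is a semiproper class iteration $Q\in V$ which forces $\stUB$, preserves $\NS_{\omega_1}$, and (via projective absoluteness from the Woodins inside $V$) preserves the full $\sigma_{\omega,\NS_{\omega_1}}$-reduct and hence $T_\forall$. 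Given these, the natural cycle is $(3)\Rightarrow(2)\Rightarrow(1)\Rightarrow(3)$.

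For $(3)\Rightarrow(2)$: given any $V\models T$, the forcing $Q$ of (b) preserves $T_\forall+\ZFC^*_{\omega,\NS_{\omega_1}}+\theta_{\bool{SC}}$ and adds $\stUB$, so (3) applied inside $V^Q$ yields $V^Q\models\psi^{H_{\omega_2}}$; as $Q$ is uniformly definable in $V\models T$, this becomes a theorem of $T$, giving (2). For $(2)\Rightarrow(1)$: given a universal $\theta$ with $T+\theta$ consistent and $V\models T+\theta$, first force with $Q$ from (b) to obtain $V^Q\models T_\forall+\theta+\stUB$, and then, combining (2) with $\stUB$-generic absoluteness (a), infer $V^Q\models\psi^{H_{\omega_2}}$. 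The $\sigma_{\omega,\NS_{\omega_1}}$-substructure $H_{\omega_2}^{V^Q}$ then realizes $T_\forall+\theta+\psi$, verifying (1). Preservation of $\theta$ from $V$ to $V^Q$ and of $T_\forall$ from $V^Q$ down to $H_{\omega_2}^{V^Q}$ rests on the absoluteness of bounded $\in$-formulae, projective absoluteness from the Woodins, and $\NS_{\omega_1}$-preservation of $Q$.

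The main obstacle is $(1)\Rightarrow(3)$. Arguing contrapositively, fix $M$ a model of $T_\forall+\ZFC^*_{\omega,\NS_{\omega_1}}+\theta_{\bool{SC}}+\stUB$ with $\neg\psi^{H_{\omega_2}^M}$. Writing $\psi\equiv\forall\vec{x}\,\exists\vec{y}\,\phi(\vec{x},\vec{y})$ with $\phi$ quantifier-free and picking $\vec{a}\in H_{\omega_2}^M$ witnessing the failure, the sentence $\theta_*(\vec{a})\equiv\forall\vec{y}\,\neg\phi(\vec{a},\vec{y})$ (in $\sigma_{\omega,\NS_{\omega_1}}$ enriched by constants for $\vec{a}$) is universal and true in $M$, hence consistent with $T_\forall$. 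The crux is to upgrade this to consistency of $T+\theta_*$: this requires realizing $H_{\omega_2}^M$ as the $H_{\omega_2}$ of a forcing extension of a genuine model of $T$, which is exactly the content of $\stUB$ as a two-directional generic absoluteness principle. Once the representation is secured, $\theta_*$ falsifies (1). This representation step is the technical heart of the proof; it relies on $\Pmax$-type analysis under $\stUB$ together with the supercompacts in $M$, and it is the precise analog at the $H_{\omega_2}$-level of the corresponding step in Theorem \ref{Thm:mainthm-3}.
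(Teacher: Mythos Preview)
Your cycle $(3)\Rightarrow(2)\Rightarrow(1)\Rightarrow(3)$ matches the paper's, and your $(3)\Rightarrow(2)$ is essentially the paper's argument (the paper names the forcing explicitly: any $P$ forcing $\MM^{++}$, which by Asper\`o--Schindler forces $\stUB$). But the other two implications diverge from the paper in ways that leave real gaps.

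\textbf{On $(2)\Rightarrow(1)$.} Your detour through $Q$ is both unnecessary and incomplete. After forcing with $Q$ you land in $V^Q\models T_\forall+\theta+\stUB$, but $V^Q$ need not model $T$, so you cannot invoke (2) there; and the $P$ that (2) gives you lives over $V$, not over $V^Q$, so your black box (a) does not apply as stated. The paper's route is much shorter: starting from $V\models T+\theta$, force directly with the $P$ supplied by (2). The key input is Theorem~\ref{Thm:mainthm-8}, which says the $\Pi_1$-theory of $V$ in $\tau_{\NS_{\omega_1}}\cup\UB$ is invariant under \emph{all} set forcing (even non-SSP). Since every $\sigma_{\omega,\NS_{\omega_1}}$-predicate is either $\NS_{\omega_1}$ or a lightface projective (hence universally Baire) set, the universal sentence $\theta$ and all of $T_\forall$ survive into $V^P$; then $H_{\omega_2}^{V^P}$ models $T_\forall+\theta+\psi$ by Levy absoluteness. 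No $\stUB$ is needed for this implication.

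\textbf{On $(1)\Rightarrow(3)$.} Your contrapositive has a structural problem: the witness $\theta_*(\vec a)$ carries parameters $\vec a\in H_{\omega_2}^M$, so it is not a $\sigma_{\omega,\NS_{\omega_1}}$-sentence and cannot serve as the $\theta$ in (1). Quantifying the parameters out gives $\neg\psi$, which is $\Sigma_2$, not universal. Even setting this aside, your ``representation'' step---realizing $H_{\omega_2}^M$ as the $H_{\omega_2}$ of a genuine model of $T$---is exactly the hard content you would need to supply, and it is not clear it can be done in this form (recall $M$ only models $T_\forall$). The paper does \emph{not} argue contrapositively. Instead, given $M\models T_\forall+\ZFC^*_{\omega,\NS_{\omega_1}}+\theta_{\bool{SC}}+\stUB$, it passes to a forcing extension $\mathcal N$ of $M$ satisfying $\maxUB$; by Theorem~\ref{Thm:mainthm-8} the $\Pi_1$-theory is preserved, so hypothesis (1) still applies to the (complete) theory of $\mathcal N$. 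The heavy lifting is then done by Theorem~\ref{Thm:mainthm-6} (the model-companion result for $\ZFC^*_{\lUB,\NS_{\omega_1}}$), which converts (1) into the statement that $\Pmax$ forces $\psi^{H_{\omega_2}}$ over $L(\UB)^{\mathcal N}$. Because all predicates in $\psi$ are lightface projective, this reduces to a statement about $L(\mathbb R)^{\mathcal N}$; elementary equivalence of $L(\mathbb R)^M$ and $L(\mathbb R)^{\mathcal N}$ (from class many Woodins) transfers it back to $M$; and finally $\stUB$ in $M$ yields $\psi^{H_{\omega_2}}$ there. The model-companion input (Theorem~\ref{Thm:mainthm-6}) is precisely what replaces your missing representation step.
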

See Remark \ref{Rem:keyrem}(\ref{Rem:keyrem-8}) for some information on 
$\stUB$.

In this article we will give a self-contained proof of Thm. \ref{Thm:mainthm-3} and of a weaker variation of 
Thm. \ref{Thm:mainthm-4} (cfr. Thm. \ref{Thm:mainthm-5}). 
Thm. \ref{Thm:mainthm-4} is an easy corollary of results which we will formulate in this paper at a later stage (since they need more terminology then what has been introduced so far to be properly stated), and which will be proved in a sequel of this paper (since their proof is considerably more involved, and its inclusion here would make the length of this paper grow exponentially).

Some of the following remarks are technical and require a
strong background in set theory. The reader can safely skip
them without compromising the comprehension of the remainder of this paper.

\begin{Remark}\label{Rem:keyrem}
\emph{}

\begin{enumerate}
\item The theories $T$ considered in all the above theorems are definable and conservative extensions of their
$\in$-fragment; more precisely: for any of the above signatures $\tau$ there is a recursive list of axioms $T_\tau\subseteq T$ such that any $\in$-structure admits a unique extension to a $\tau$-structure which models $T_\tau$ (see Fact \ref{fac:basicfact} below).

The key but trivial observation is that in the new signatures one can express the same concepts one can express in 
the signature $\in$, but using 
for many of these concepts formulae of much lower complexity according to the prenex normal form stratification.
For example:
\begin{itemize} 
\item
In $\sigma_\omega$ \emph{projective determinacy} is expressible by means of a family of contably many
atomic sentences (see item \ref{Rem:keyrem-5} below).
\item
\emph{There is an uncountable cardinal} is expressible by the $\Sigma_2$-sentence for $\tau_\ST$ (and 
$\sigma_\omega$)
\[
\exists x\, [(x\text{ is an ordinal }\wedge \omega\in x)\wedge
\forall f\,[(f \text{ is a function}\wedge\dom(f)\in x)\rightarrow \ran(f)\neq x]
\]
(and this concept \textbf{cannot} be expressed in this signature by a formula of lower complexity, even in $\sigma_\omega$).
\item
On the other hand the above sentence is $\ZFC^{*-}_{\omega,\NS_{\omega_1}}$-equivalent to the
universal $\tau_\ST\cup\bp{\omega_1}$-sentence:
\[
[(\omega_1\text{ is an ordinal }\wedge \omega\in \omega_1)\wedge
\forall f\,[(f \text{ is a function}\wedge\dom(f)\in \omega_1)\rightarrow \ran(f)\neq \omega_1].
\]
\end{itemize}
Our focus will be to understand which concepts are expressible by universal sentences and which 
are expressible by $\Pi_2$-sentences in the appropriate signatures.

One of the basic intuition leading to the above theorems is that the axiomatization of set theory
in the signature $\bp{\in}$ make unnecessarily complicated the formalization
of many basic set theoretic properties; however if one adds the ``right'' predicates 
and constant symbols to denote certain basic properties (i.e. the $\Delta_0$-properties) 
and certain more complicated ones of which we have however a clear grasp (i.e. the projective 
sets and the non-stationary ideal), the logical complexity of set-theoretic concepts lines 
up with our understanding of them. Once this operation 
is performed, the two theorems above show (assuming large cardinals) that 
for $\Pi_2$-properties consistency with the universal fragment of $T$ overlaps with forcibility over models of $T$
and with provability with respect to the right extension of $T$.

\item
Theorems 
\ref{Thm:mainthm-3} and \ref{Thm:mainthm-4}
are special instantiation of a method which pairs the notion of model companionship 
with generic absoluteness results. Roughly the equivalence between (1) and (3) of the two
theorems follow from the existence of a model companion for $T$ in the appropriate signature, while the
equivalence of (2) and (3) follows from generic absoluteness results.

\item The reader may wonder why Thm. \ref{Thm:mainthm-3} does not conflict with G\"odel's incompleteness theorem. 
Let $T_0$ be the theory 
\[
\ZFC_\omega+\text{ there are class many Woodin cardinals}.
\] 
The  G\"odel $\in$-sentence  $\mathrm{Con}(T_0)$ and its negation
become atomic $\tau_\ST$-sentences (since all their quantifiers range over $\omega$), hence a part of the universal 
(or of the $\Pi_2$) theory of any complete extension of $T_0$.
However there are complete extensions of $T_0$ containing $\mathrm{Con}(T_0)$ 
and others containing its negation, therefore the equivalences set forth in Thm. \ref{Thm:mainthm-3} are not violated letting
the $\psi$ of the theorem be $\mathrm{Con}(T_0)$.

Note on the other hand that the content of Thm. \ref{Thm:mainthm-3} is that 
``almost'' any question of second order arithmetic (see the next item) is decided by 
large cardinal axioms: apart from G\"odel sentences, 
it is clearly open
whether there are more interesting arithmetic (or even projective) statements
(such as Golbach's conjecture or Schanuel's conjecture or Riemann's hypothesis) 
which are independent 
of these axioms.
Similar considerations apply to Theorem \ref{Thm:mainthm-4}.

\item \label{Rem:keyrem-4}
Every lightface projective set (i.e. any definable subset without parameters of the structure
$(\pow{\mathbb{N}},\in,\subseteq)$) is the extension of a quantifier free formula in $\sigma_\omega$.
Letting $\phi_n(x,y)$ by a $\tau_\ST$-formula defining a universal set for $\Sigma^1_{n+1}$-sets,
it is not hard to see that
projective determinacy  (according to the notation of \cite[Section 20.A]{kechris:descriptive}) 
is given by an axiom scheme of $\tau_\ST$-sentences in which quantifiers range just over subsets of $\pow{\omega^{<\omega}}$.
In particular projective determinacy is expressed by a family of  atomic sentences for $\sigma_\omega$ 
in $\ZFC^*_\omega$.
%

\item \label{Rem:keyrem-5}
The negation of the Continuum hypothesis 
$\mathsf{CH}$ is expressible in the signature 
$\tau_\ST\cup\bp{\omega_1}\subseteq\sigma_{\omega,\NS_{\omega_1}}$ 
as the $\Pi_2$-sentence $\psi_{\neg\mathsf{CH}}$:
\begin{align*}
(\omega_1\text{ is the first uncountable cardinal})\wedge\\ 
\wedge \forall f\,
[(f\text{ is a function }\wedge \dom(f)=\omega_1)\rightarrow\exists r (r\subseteq\omega\wedge r\not\in \ran(f))].
\end{align*}
Most of third order number theory is expressible in this signature by a $\Pi_2$-sentence, 
for example this is the case for \emph{Suslin's hypothesis}, 
\emph{every Aronszjain tree is special}, and a variety of other statements.

%

\item \label{Rem:keyrem-8}
It is out of the scopes of the present paper to define 
$(*)$-$\UB$; it will be essentially used only in the sequel of this work;
\ref{Thm:mainthm-4-1} of Thm. \ref{Thm:mainthm-4} is
the unique place of this paper where this statement will ever be mentioned.
For the convenience of the interested reader we include its definition in Section~\ref{sec:furtherresults}.
Let us just briefly say that 
$(*)$-$\UB$ is the strong form of Woodin's axiom $(*)$ asserting that $\NS_{\omega_1}$ is saturated together with
the existence of an $L(\UB)$-generic filter for Woodin's 
$\Pmax$-forcing\footnote{See \cite{HSTLARSON} for details on $\Pmax$.} 
(where $L(\UB)$ is the smallest transitive model of $\ZF$ containing all the universally Baire sets).
\end{enumerate}
\end{Remark}

Our ambition is to make the remainder of this paper self-contained and 
accessible to any person who has a fair acquaintance with set theory
and first order logic. From now on no familiarity with forcing, large cardinal axioms, forcing axioms 
is needed or assumed on the reader, all it is required is just to accept as meaningful 
the statement of these theorems.

The following piece of notation will be used.
\begin{Notation}\label{not:keynotation}
\emph{}

\begin{itemize}
\item
$\tau_{\mathsf{ST}}$ is the extension of the first order signature $\bp{\in}$ for set theory 
which is obtained 
by adjoining predicate symbols
$R_\phi$ of arity $n$ for any $\Delta_0$-formula $\phi(x_1,\dots,x_n)$, function symbols of arity $k$
for any $\Delta_0$-formula $\theta(y,x_1,\dots,x_k)$
and constant symbols for 
$\omega$ and $\emptyset$.
\item
$\sigma_\ST$ is the signature containing a 
predicate symbol 
$S_\phi$ of arity $n$ for any $\tau_\ST$-formula $\phi$ with $n$-many free variables.
\item
$\sigma_\kappa=\sigma_\ST\cup\tau_\ST\cup\bp{\kappa}$ with $\kappa$ a constant symbol.
\end{itemize}

\begin{itemize}
\item $\ZFC^{-}$ is the
$\in$-theory given by the axioms of
$\ZFC$ minus the power-set axiom.

\item
$T_\ST$ is the $\tau_\ST$-theory given by the axioms
\[
\forall \vec{x} \,(R_{\forall x\in y\phi}(y,\vec{x})\leftrightarrow \forall x(x\in y\rightarrow R_\phi(y,x,\vec{x}))
\]
\[
\forall \vec{x} \,[R_{\phi\wedge\psi}(\vec{x})\leftrightarrow (R_{\phi}(\vec{x})\wedge R_{\psi}(\vec{x}))]
\]
\[
\forall \vec{x} \,[R_{\neg\phi}(\vec{x})\leftrightarrow \neg R_{\phi}(\vec{x})]
\]
\[
(\forall \vec{x}\exists!y \,R_{\phi}(y,\vec{x}))\rightarrow (\forall \vec{x}\,R_{\phi}(f_\phi(\vec{x}),\vec{x}))
\]
for all $\Delta_0$-formulae $\phi(\vec{x})$, together with the $\Delta_0$-sentences
\[
\forall x\in\emptyset\,\neg(x=x),
\]
\[
\omega\text{ is the first infinite ordinal}
\]
(the former is an atomic $\tau_\ST$-sentence, the latter is expressible as the $\Pi_1$-sentence for 
$\tau_\ST$ stating that
$\omega$ is a non-empty limit ordinal contained in any other non-empty limit ordinal).
\item
$T_\kappa$ is the $\sigma_\ST\cup\bp{\kappa}$-theory given by the axioms
\begin{equation}\label{eqn:keytau*kappa}
\forall x_1\dots x_n\,[S_\psi(x_1,\dots,x_n)\leftrightarrow 
(\bigwedge_{i=1}^n x_i\subseteq \kappa^{<\omega}\wedge \psi^{\pow{\kappa^{<\omega}}}(x_1,\dots,x_n))]
\end{equation}
as $\psi$ ranges over the $\in$-formulae.
\item
$\ZFC^-_\ST$ is the $\tau_\ST$-theory 
\[
\ZFC^{-}\cup T_\ST
\] 
\item
$\ZFC^-_\kappa$ is the $\tau_\ST\cup\bp{\kappa}$-theory 
\[
\ZFC^-_\ST\cup\bp{\kappa\text{ is an infinite cardinal}};
\]
\item
$\ZFC^{*-}_\kappa$ is the $\sigma_\kappa$-theory 
\[
\ZFC^-_\kappa\cup T_\kappa;
\]
\item 
$\ZFC^{*-}_\omega$ is 
\[
\ZFC^{*-}_\kappa\cup\bp{\kappa\text{ is the first infinite cardinal}};
\] 
\item
Accordingly we define $\ZFC_\ST$,
$\ZFC_\kappa$, $\ZFC^*_\ST$,
$\ZFC^*_\kappa$,
 $\ZFC^*_\omega$.
\end{itemize}
\end{Notation}

\begin{Fact}\label{fac:basicfact}
Every $\sigma_\kappa$-formula is $T_\kappa\cup T_\ST$-equivalent to an $\bp{\in,\kappa}$-formula.

Moreover assume  $\kappa$ is a definable cardinal (i.e. $\kappa=\omega$ or $\kappa=\omega_1$); 
more precisely assume there is an $\in$-formula $\psi_\kappa(x)$ such that
\[
\ZFC^-\vdash\exists!x\,[\psi_\kappa(x)\wedge (x\text{ is a cardinal})].
\]

Then every $\sigma_\kappa$-formula is 
$\ZFC^{*-}_\kappa+\psi_\kappa(\kappa)$-equivalent to an $\in$-formula.
\end{Fact}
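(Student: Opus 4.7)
The plan is to proceed by induction on the complexity of $\sigma_\kappa$-terms and formulas, peeling off the extra symbols layer by layer. The argument splits naturally into two phases, and then a third for the Moreover clause.

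First I establish the subsidiary claim that every $\tau_\ST$-formula is $T_\ST$-equivalent to an $\in$-formula. Subterms built from $\omega$, $\emptyset$, or a function symbol $f_\theta$ are removed by introducing fresh existentially quantified variables: the axioms of $T_\ST$ characterize $\omega$ as the first infinite ordinal and $\emptyset$ via $\forall x\in\emptyset\,\neg(x=x)$, while the axiom $(\forall\vec{x}\,\exists!y\,R_\theta(y,\vec{x}))\rightarrow \forall\vec{x}\,R_\theta(f_\theta(\vec{x}),\vec{x})$ yields $y=f_\theta(\vec{x})\leftrightarrow \theta(y,\vec{x})$ on the functional branch. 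For each atomic predicate $R_\phi(\vec{x})$, an inner induction on the construction of the $\Delta_0$-formula $\phi$, using the $T_\ST$ axioms for $R_{\forall x\in y\phi}$, $R_{\phi\wedge\psi}$, $R_{\neg\phi}$ together with the base case $R_{x\in y}(x,y)\leftrightarrow x\in y$, yields $R_\phi(\vec{x})\leftrightarrow \phi(\vec{x})$. Boolean connectives and quantifiers are handled trivially in the outer induction.

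For the main statement I then combine the subsidiary claim with the axiom scheme \eqref{eqn:keytau*kappa} of $T_\kappa$. Given a $\sigma_\kappa$-formula, each atom $S_\psi(\vec{x})$ is first unfolded to the $\tau_\ST\cup\bp{\kappa}$-formula $\bigwedge_i x_i\subseteq \kappa^{<\omega}\wedge \psi^{\pow{\kappa^{<\omega}}}(\vec{x})$, where $\psi^{\pow{\kappa^{<\omega}}}$ is the relativization of $\psi$ to the $\bp{\in,\kappa}$-definable class $\pow{\kappa^{<\omega}}$. The resulting formula contains no $S_\psi$ atom, so the subsidiary claim (applied with $\kappa$ treated as a free parameter) rewrites it as an $\bp{\in,\kappa}$-formula. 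For the Moreover clause, I take the $\bp{\in,\kappa}$-formula $\chi(\kappa,\vec{x})$ so produced and eliminate the constant $\kappa$ using its defining $\in$-formula $\psi_\kappa$: modulo $\ZFC^{*-}_\kappa+\psi_\kappa(\kappa)$, the formula $\chi(\kappa,\vec{x})$ is provably equivalent to $\forall z\,(\psi_\kappa(z)\rightarrow \chi(z,\vec{x}))$, a pure $\in$-formula, by the provable uniqueness of the $z$ satisfying $\psi_\kappa$.

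The main obstacle is the handling of a function symbol $f_\theta$ whose defining formula $\theta$ is not provably functional: the $T_\ST$ axioms listed above do not fully determine $f_\theta$ in that case, so the equivalence $y=f_\theta(\vec{x})\leftrightarrow \theta(y,\vec{x})$ only works on the functional branch. The natural remedy, implicit in the phrase \emph{definable conservative extension} in the sense of \cite[Ch.~I.13]{KUNEN}, is to supplement $T_\ST$ with a default clause forcing $f_\theta(\vec{x})=\emptyset$ whenever $\theta(y,\vec{x})$ does not hold for a unique $y$; this renders $f_\theta$ explicitly $\in$-definable and makes the inductive translation go through uniformly across every model of $T_\ST\cup T_\kappa$.
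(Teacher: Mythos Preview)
Your proposal is correct and takes exactly the approach the paper has in mind: the paper's proof is a single sentence pointing to a ``straightforward induction on the $\sigma_\kappa$-formulae'' (and to Prop.~\ref{prop:quantelimallthe} for the analogous argument), and you have simply spelled out that induction in detail. Your observation about the default-value convention for $f_\theta$ when $\theta$ is not provably functional is a genuine subtlety the paper elides; it is the standard fix (and indeed the one implicit in the paper's reference to \cite[Ch.~I.13]{KUNEN}), so your handling of it is appropriate.
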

\begin{proof}
The axioms of $T_\ST$ and $T_\kappa$ are cooked up exactly so that one can prove the result by a straightforward induction on the $\sigma_\kappa$-formulae 
(see also the proof of Prop. \ref{prop:quantelimallthe}).
\end{proof}

Theorem \ref{Thm:mainthm-3} 
is an immediate corollary of Woodin's generic results for second order number 
theory (cfr. \cite{WOOBOOK})
coupled with the following theorem:

\begin{Theorem} \label{Thm:mainthm-1}
Assume $T\supseteq \ZFC^*_\kappa$
is a $\sigma_\kappa$-theory.
Then $T$ has a model companion $T^*$.
Moreover for any $\Pi_2$-sentence $\psi$ for $\sigma_\kappa$, TFAE:
\begin{enumerate}
\item $\psi\in T^*$;
\item
$T\vdash\psi^{H_{\kappa^+}}$;
\item
For all universal $\sigma_\kappa$-sentences $\theta$,
$T_\forall+\theta$ is consistent if and only if so is $T_\forall+\theta+\psi$.
\end{enumerate}
\end{Theorem}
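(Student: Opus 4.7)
The plan is to define
\[
T^* := \{\psi : T \vdash \psi^{H_{\kappa^+}}\},
\]
the $\sigma_\kappa$-theory of $\{H_{\kappa^+}^M : M \models T\}$, which makes (1) $\Leftrightarrow$ (2) immediate. The work is then to show $T^*$ is the model companion of $T$, and that (3) is equivalent to (1).

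The technical heart is to prove that $T^*$ admits quantifier elimination in $\sigma_\kappa$, whence it is model complete. The key observation is that elements of $H_{\kappa^+}$ are uniformly coded by subsets of $\kappa^{<\omega}$: each $x \in H_{\kappa^+}$ is the Mostowski collapse of a well-founded extensional relation $A \subseteq \kappa^{<\omega}$, and the passage between $x$ and some canonical such $A$ is $\Delta_0$-definable, hence realized by function symbols of $\tau_\ST \subseteq \sigma_\kappa$. An existential quantifier $\exists y$ ranging over $H_{\kappa^+}$ thereby becomes an existential over subsets of $\kappa^{<\omega}$, i.e., a first-order assertion about $(\pow{\kappa^{<\omega}}, \in)$; by design of $\sigma_\kappa$, such an assertion is captured by a single predicate $S_\rho$ applied to codes of the parameters. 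Iterating this absorption collapses all quantifiers.

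To complete the model companion identification, $T_\forall \subseteq T^*_\forall$ holds because $H_{\kappa^+}^M \subseteq M$ is a $\sigma_\kappa$-substructure and universal sentences are preserved downward. The converse, $T^*_\forall \subseteq T_\forall$, requires more work: given a counterexample $\vec a \in M \models T + \neg \theta$ to a universal $\theta$, one collapses $|\trcl(\vec a)|$ to $\kappa$ by a Levy-style forcing to produce $M[G]$ with $\vec a \in H_{\kappa^+}^{M[G]}$; the quantifier-free matrix of $\theta$ is preserved, yielding $\theta \notin T^*$. Mutual model consistency then follows by standard arguments. For (1) $\Leftrightarrow$ (3), which is a general feature of $\Pi_2$-sentences in model companions, the direction (1) $\Rightarrow$ (3) proceeds by iteratively constructing an $\omega$-chain of models of $T_\forall + \theta$, each extending the previous, with every required $\psi$-witness eventually realized (justified via $\psi \in T^*$); for (3) $\Rightarrow$ (1) one uses model completeness to replace $\neg \psi$ by an equivalent existential formula and extracts, via a Robinson-style diagram argument, a universal $\theta$ witnessing the failure of (3).

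The main obstacle is the quantifier elimination step: the coding is intuitive but requires careful bookkeeping of how the function symbols $f_\theta$, the constant $\kappa$, and the predicates $R_\rho, S_\rho$ of $\sigma_\kappa$ interact to absorb all quantifiers. The Levy-collapse argument for $T^*_\forall \subseteq T_\forall$ is a secondary technical point that relies on set-forcing preservation of (the relevant fragment of) $T$.
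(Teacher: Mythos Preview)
Your overall architecture matches the paper's: establish model completeness of the theory of $H_{\kappa^+}$ via the coding of sets by well-founded extensional relations on $\kappa$, then identify this theory as the model companion of $T$. The coding step is essentially the same as the paper's Theorem~\ref{thm:modcompHkappa+}. However, there is a genuine gap in your argument for $T^*_\forall \subseteq T_\forall$.

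The Levy-collapse forcing argument does not work. The signature $\sigma_\kappa$ contains predicates $S_\phi$ whose interpretation is $\phi^{\pow{\kappa^{<\omega}}}$; these encode \emph{arbitrary} first-order properties of the structure $(\pow{\kappa^{<\omega}},\in)$, and forcing alters $\pow{\kappa^{<\omega}}$. Hence the quantifier-free matrix of $\theta$ is \emph{not} preserved from $M$ to $M[G]$: an atomic subformula $S_\phi(t(\vec a))$ can change truth value even when $t(\vec a)$ is fixed. (The paper flags exactly this in Remark~\ref{rmk:keyrembis}: $\sigma_\kappa$ behaves badly under forcing for $\kappa\geq\omega_1$, and even for $\kappa=\omega$ it fails without large cardinals.) Moreover, $T$ is an \emph{arbitrary} $\sigma_\kappa$-extension of $\ZFC^*_\kappa$; there is no ``relevant fragment'' of $T$ that set forcing must preserve, so nothing ensures $H_{\kappa^+}^{M[G]}$ models $T^*$.

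The paper avoids forcing entirely. The missing ingredient is a strengthened Levy absoluteness (Lemma~\ref{lem:levyabsHkappa+}): for any family $\mathcal{A}$ of subsets of $\bigcup_n\pow{\kappa}^n$, one has $(H_{\kappa^+},\tau_\ST\cup\mathcal{A})\prec_{\Sigma_1}(V,\tau_\ST\cup\mathcal{A})$. Since each $S_\phi$ is a subset of some $\pow{\kappa^{<\omega}}^n\subseteq H_{\kappa^+}$, this applies to $\sigma_\kappa$. Thus if $M\models T+\neg\theta$ with $\theta$ universal, the $\Sigma_1$-sentence $\neg\theta$ already reflects to $H_{\kappa^+}^M$, which models $T^*$. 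You observed only that $H_{\kappa^+}^M$ is a $\sigma_\kappa$-\emph{substructure} of $M$; the point is that it is $\Sigma_1$-elementary, precisely because the new predicates all live inside $H_{\kappa^+}$. This single lemma simultaneously yields $T^*_\forall=T_\forall$ and the hypothesis of the abstract criterion (Lemma~\ref{fac:proofthm1-2}) the paper uses to conclude both model companionship and the equivalence with (3).

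A minor point: you claim quantifier elimination for $T^*$, but the paper proves only model completeness. The universal formula it produces, $\forall\vec y\,[(\bigwedge_i\Cod_\kappa(y_i)=x_i)\to S_{\theta_\phi}(\vec y)]$, is not quantifier-free, since $\Cod_\kappa(y)=x$ is genuinely $\Sigma_1$. Model completeness is all that is needed.
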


We note that approximations to 
Thm. \ref{Thm:mainthm-1} for the case $\kappa=\omega$, and to
Thm.~\ref{Thm:mainthm-3} 
already appears in \cite{VIAVENMODCOMP}.

The present paper give a self-contained proof of Theorems
 \ref{Thm:mainthm-3} and \ref{Thm:mainthm-1}. We defer to a second paper the proof of Theorem \ref{Thm:mainthm-4} 
 (which reposes on the recent breakthrough by Asper\`o and Schindler
establishing that Woodin's axiom $(*)$ follows from $\MM^{++}$ \cite{ASPSCH(*)}); here we will prove a weaker version 
of it (cfr. Thm. \ref{Thm:mainthm-5}) at the end of Section \ref{sec:Hkappa+}.

We prove rightaway Thm. \ref{Thm:mainthm-3} assuming Thm. \ref{Thm:mainthm-1}:
\begin{proof}
Woodin's generic absoluteness results for second order number theory give that
\ref{Thm:mainthm-3}(\ref{Thm:mainthm-3-1}) and 
\ref{Thm:mainthm-3}(\ref{Thm:mainthm-3-3})
are equivalent
(we give here a self-contained proof of this particular instance of Woodin's results in
Theorem \ref{thm:genabshomega1}). 
Theorem \ref{Thm:mainthm-1} gives the equivalence of 
\ref{Thm:mainthm-3}(\ref{Thm:mainthm-3-2}) and 
\ref{Thm:mainthm-3}(\ref{Thm:mainthm-3-1}).
\end{proof}

The proof and statement of Thm. \ref{Thm:mainthm-1}
require familiarity with set theory at the level of an undergraduate book (for example \cite{JECHHRBACEKBOOK} coupled with \cite[Chapters III, IV]{KUNEN} is far more than sufficient)
as well as familiarity 
 with the notion of model companionship. 
 
 To complete this introductory section it is
 convenient to sort out how the
definable extensions $\ZFC^*_\ST$, $\ZFC^*_\omega$,
$\ZFC^*_{\omega,\NS_{\omega_1}}$ behave with 
respect to forcing. A central role is played by large cardinal axioms.
The reader can safely skip this remark without compromising the reading of 
the sequel of this paper.  

 \begin{Remark}\label{rmk:keyrembis}
We outline here the invariance under forcing of the $\Pi_1$-theory of $V$ in 
certain natural signatures; since the universal fragment of a theory $T$
determines completely its model companion, the fact that in certain signatures 
$\tau$ forcing cannot change the $\Pi_1$-theory of $V$
(in combination with Levy's absoluteness theorem) is the key to understand 
why set theory can have a model companion in some of these signatures, and 
why the properties of the model companion 
theory are paired with generic absoluteness results.

\begin{itemize}
\item
The standard absoluteness results of Kunen's 
book \cite[Ch. IV]{KUNEN} show that if $G$ is $V$ 
generic for some forcing notion $P\in V$,
$V\sqsubseteq V[G]$ for $\tau_\ST$.
\item
Shoenfield's absoluteness Lemma entails that if $G$ is $V$ 
generic for some forcing notion $P\in V$,
$V\prec_1 V[G]$ for $\tau_\ST$.

This holds since $H_{\omega_1}\prec V$ and 
$H_{\omega_1}^{V[G]}\prec V[G]$ (cfr. Lemma \ref{lem:levyabsHkappa+}), and $H_{\omega_1}\prec H_{\omega_1}^{V[G]}$ 
(see for example \cite[Lemma 1.2]{VIAMMREV}) for the signature $\tau_\ST$.
\item
Major results of the Cabal seminar bring that assuming the existence of 
class many Woodin cardinals in $V$, if $G$ is $V$ 
generic for some forcing notion $P\in V$,
$V\sqsubseteq V[G]$ for $\sigma_\omega$ (roughly because 
$H_{\omega_1}^V\prec H_{\omega_1}^{V[G]}$ by 
Thm. \ref{thm:genabshomega1}, while 
$H_{\omega_1}^V\prec_1 V$ and $H_{\omega_1}^{V[G]}\prec_1 V[G]$ by 
Lemma \ref{lem:levyabsHkappa+}) for the signature $\sigma_\omega$.
More generally the same large cardinal assumptions and argument yield that 
$V\sqsubseteq V[G]$ also for the signature extending $\tau_\ST\cup\UB$ 
with predicate symbols for all universally Baire sets of $V$ 
(instead of considering just the lightface projective sets as done by 
$\sigma_\omega$).
\item
Assume $G$ is $V$ 
generic for some forcing notion $P\in V$.
$V\sqsubseteq V[G]$ for $\tau_\ST\cup\bp{\omega_1,\NS_{\omega_1}}$ if and only if $P$ is stationary set preserving: for the atomic predicates $\NS_{\omega_1}$
the formula $\neg\NS_{\omega_1}(S)$ is preserved between
$V$ and $V[G]$ for all $S\subseteq\omega_1$ in $V$ only
in this case. The sentence \emph{$\omega_1$ is the first uncountable cardinal} is preserved only if
$P$ does not collapse $\omega_1$.
\item
Assuming the existence of 
class many Woodin cardinals in $V$ for \emph{any}
forcing $P\in V$ (i.e. also if $P$ is not stationary set preserving or collapses $\omega_1$),
for any $G$ $V$-generic for $P$, 
$V[G]$ and $V$ satisfy the same 
$\Pi_1$-sentences for $\sigma_{\omega,\NS_{\omega_1}}$
(Thm. \ref{Thm:mainthm-8}).
\item
On the other hand the signature $\sigma_\kappa$ with $\kappa\geq\omega_1$ behaves
badly with respect to forcing; one has to put severe limitation on the type of forcings $P$
considered in order
to maintain that $V\sqsubseteq V[G]$ or just that $V$ and $V[G]$ satisfy the same universal 
$\sigma_\kappa$-sentences
(see Remark \ref{Rem:keyrem1} to appreciate the difficulties). However we will prove an interesting variation of Thm.
\ref{Thm:mainthm-4} for $\sigma_\kappa$ in case $\kappa$ is interpreted by $\omega_1$ 
(cfr. Thm. \ref{Thm:mainthm-5}).
\end{itemize}

These results combined together give the following argument for the proof of
(\ref{Thm:mainthm-4-3}) implies (\ref{Thm:mainthm-4-2}) of Thm. \ref{Thm:mainthm-4}
(mutatis mutandis for the proof of (\ref{Thm:mainthm-3-3}) implies (\ref{Thm:mainthm-3-2}) of Thm. \ref{Thm:mainthm-3}): 
let $\psi $ be a $\Pi_2$-sentece for 
$\sigma_{\omega,\NS_{\omega_1}}$ satisfying (\ref{Thm:mainthm-4-3}).
Given some $\Pi_1$-sentence 
$\theta$ for $\sigma_{\omega,\NS_{\omega_1}}$ consistent with $T$, find $\mathcal{M}$ model of $T+\theta$.
By (\ref{Thm:mainthm-4-3}) some forcing $P\in\mathcal{M}$
forces $\psi^{H_{\omega_2}}$. By Thm. \ref{Thm:mainthm-8} and Levy's absoluteness Lemma
\ref{lem:levyabsHkappa+}, the theory
$T_\forall+\theta+\psi$ holds in $H_{\omega_2}^{\mathcal{N}}$ whenever $\mathcal{N}$ is a generic extension of 
$\mathcal{M}$ by $P$.
\end{Remark}

 The paper is organized as follows:
 \begin{itemize}
 \item
 Section \ref{sec:Hkappa+} proves Thm. \ref{Thm:mainthm-1}
 (WARNING: familiarity with the notion of model companionship is required).
 We also include in its last part a proof of a weaker variation of Thm. \ref{Thm:mainthm-4} (cfr. Thm. \ref{Thm:mainthm-5}).
 \item
 Section \ref{sec:modth} gives a detailed account of model completeness and model companionship\footnote{
 Our ambition is that this section could serve as a compact self-contained account
 of the key properties of model companion theories.}.  
 \item
 Section \ref{sec:auxres} gives a self-contained proof of the form of Levy absoluteness and of the particular form of Woodin's generic absoluteness results we employ in this paper\footnote{We included these results 
 here, because the versions of these results we found in the literature were not exactly fitting to our set up. Again our purpose for this section is to simplify the reader's task, as well to give minor improvements of known results.}.
 \item Section \ref{sec:somecomm} gives some intuitions motivating 
 Theorems \ref{Thm:mainthm-3},
 \ref{Thm:mainthm-4}, \ref{Thm:mainthm-1}, and
 a few ``philosophical'' 
considerations we can draw from them (in particular an argument for the failure of $\bool{CH}$).
The reader can safely skip it without compromising the comprehension of the remainder of the paper (WARNING: familiarity with the notion of model companionship is required).
\item Section \ref{sec:furtherresults} collects the main results we will prove in a sequel of this paper.
 \end{itemize}
 
 The paper contains (overly?) detailed proofs of every non-trivial result 
(many of which can be also found elsewhere i.e. most ---if not all--- of those appearing in 
sections \ref{sec:modth} and \ref{sec:auxres}), 
this has been made at the expenses of its brevity.
Our hope is that this approach makes the paper accessible to all scholars with a basic knowledge of set theory and model theory.
 
The reader unfamiliar with the notion of model companionship and its main implications should start with Section
\ref{sec:modth}, rather than with Sections \ref{sec:Hkappa+}
or \ref{sec:somecomm}.

{\small
\subsection*{Acknowledgements}

This research has been completed while visiting the \'Equipe de Logique 
Math\'ematique of the IMJ in 
Paris 7 in the fall semester of 2019.
The author thanks Boban Veli\v{c}kovi\'c, David Asper\'o, Giorgio Venturi, 
for the fruitful discussions held on the topics
of the present paper; I particularly thank Venturi for contributing substantially to 
the elaboration of many of the considerations in Section \ref{sec:somecomm}, and 
Veli\v{c}kovi\`c for providing counterexamples to many of my attempts to 
produce generalizations of the results of the present paper. 

The opportunity to present preliminary versions of these results in the set theory seminar of the
\'Equipe has also given me the possibility to improve them substantially. 
I thank all the people attending it for their many useful comments, in particular Alessandro Vignati.

There are many others with whom I exchanged frutiful and informative discussions on these topics, among them Philipp Schlicht and Neil Barton.
}

\section{Some comments}\label{sec:somecomm}

Let us bring to light some ideas bringing to Theorems \ref{Thm:mainthm-3}, \ref{Thm:mainthm-4}, 
\ref{Thm:mainthm-1}. 

\subsubsection*{Correct signatures for set theory}
A first basic idea  is
that bounded formulae express ``simple'' properties of sets. The Levy stratification of set-theoretic properties consider those expressed by bounded formulae the simplest; then the complexity increases as unbounded quantifiers lines up in the prenex normal form of a formula. In particular the Levy stratification
matches exactly with the stratification of $\tau_\ST$-formulae according to the number of alternating quantifiers
in their $\ZFC_\ST$-equivalent prenex form.

Assume instead we measure the complexity of a set theoretic property $P$ according to the number of alternating quantifiers of the prenex normal form of its 
$\in$-formalization. 
Then many basic properties already have high complexity: the formula
$z=\bp{x,y}$ is expressed by a $\Pi_1$ formula for $\in$; the $\in$-formula expressing
\emph{$f$ is a function} by means of Kuratowski pairs to define relations 
has already so many quantifiers that one cannot  estimate their numbers at first glance, etc.
If we resort to the axiomatization of set theory given by $\ZFC_\ST$, this problem is overruled, and these two properties
are expressed by atomic $\tau_\ST$-formulae\footnote{There are atomic 
$\tau_\ST$-formulae whose $\ZFC_{\ST}$-equivalent prenex $\in$-formula of least complexity 
has an arbitrarily large number of alternating quantifiers.}.
In particular reformulating $\ZFC$ using the signature $\tau_\ST$ recalibrates the complexity of formulae
letting arbitrarily complex $\in$-formulae become atomic, while not changing the set of 
$\ZFC$-provable theorems, and stratifies set theoretic properties 
in complete accordance with the Levy hierarchy\footnote{Nonetheless there are $\in$-sentences 
whose least complexity $\ZFC_\ST$-equivalent
$\tau_\ST$-sentence in prenex normal form 
has an arbitrary finite number of alternating quantifiers, examples are given
by lightface definable universal sets for $\Sigma^1_n$-properties (cfr. \cite[Thm. 4.6]{VIAVENMODCOMP}).}.


\subsubsection*{Levy absoluteness and model companionship}
Mostowski collapsing theorem and the axiom of choice allow to code a set by a well founded relation on its hereditary cardinality, and
in this way translate in an ``absolute manner'' questions about sets in $H_{\kappa^+}$ to questions about
$\pow{\kappa}$. The content of Theorem~\ref{Thm:mainthm-1} is that  we can give a very precise model-theoretic meaning to the term ``absolute manner'': any $\sigma_\kappa$-formula is 
$\ZFC^-_\kappa+\bp{\emph{all sets have size $\kappa$}}$-equivalent
to a universal $\sigma_\kappa$-formula and to an existential $\sigma_\kappa$-formula, i.e. it is a provably $\Delta_1$-property in this theory. What happens is that we encoded complicated questions
about the power-set of $\kappa$ by means of atomic predicates, since the axioms listed in
\ref{eqn:keytau*kappa} amount to a method to eliminate quantifiers ranging over $\pow{\kappa}$.
So Theorem~\ref{Thm:mainthm-1} is another way to reformulate that
 the first order theory of $H_{\kappa^+}$ reduces to
the first order theory of $\pow{\kappa}$ in an absolute manner.

Remark also that for all models $(V,\in)$ of $\ZFC$ and all cardinals 
$\kappa\in V$ and all signatures 
\[
\tau_\kappa\subseteq\tau\subseteq \tau_\kappa\cup\pow{\pow{\kappa}}
\]
$(H_{\kappa^+}^V,\tau^V)\prec_1(V,\tau^V)$ is the unique transitive 
substructure of 
$V$ containing $\pow{\kappa}$
which models $\ZFC^-$ and the 
$\Pi_2$-sentence for $\tau_\kappa$
\[
\forall X\exists f\,(f:\kappa\to X\text{ is surjective}).
\]
In particular if a model companion of the $\tau$-theory
of $V$ exists, this can only be the $\tau$-theory of 
$H_{\kappa^+}$.

\subsubsection*{Generic invariance of the $\Pi_1$-theory of $V$ in a given signature}
We say that a 
signature $\sigma$ is generically tame for a $\sigma$-theory $T$ extending $\ZFC_\kappa$
if the $\Pi_1$-consequences of $T$ must be preserved through forcing 
extensions of models of $T$ (which brings the implication (\ref{Thm:mainthm-3-3})$\to$(\ref{Thm:mainthm-3-2}) of Theorem \ref{Thm:mainthm-3} --- as well as the corresponding implications of Theorems 
\ref{Thm:mainthm-4}, \ref{Thm:mainthm-7}, \ref{Thm:mainthm-6} --- by the argument sketched in Remark \ref{rmk:keyrembis}).

Theorem \ref{Thm:mainthm-8} shows that this generic invariance holds 
for all\footnote{See Notation \ref{not:keynotation-2} for the definition of
$\tau_{\NS_{\omega_1}}$} $\sigma\subseteq\tau_{\NS_{\omega_1}}\cup\UB^V$ 
where $\UB^V$ denotes the family of
universally Baire sets of some $(V,\in)$ which models $\ZFC+$\emph{large cardinals}.

Theorem \ref{Thm:mainthm-8} is close to optimal:
a (for me surprising) fact remarked by Boban Veli\v{c}kovi\`c is that Thm. \ref{Thm:mainthm-8} cannot possibly hold for any $\sigma\supseteq \tau_{\ST}\cup\bp{\omega_1,\omega_2}$, where $\omega_2$ is a constant which names the second uncountable cardinal:
\begin{quote}
$\Box_{\omega_2}$ is a $\Sigma_1$-statement for $\tau_{\omega_2}=\tau_{\ST}\cup\bp{\omega_1}\cup\bp{\omega_2}$:
\begin{align*}
\exists\bp{C_\alpha:\alpha<\omega_2}&[\\
&\forall \alpha\in\omega_2\, (C_\alpha\text{ is a club subset of }\alpha)\wedge\\
&\wedge\forall\alpha\in\beta\in\omega_2 \,(\alpha\in \lim(C_\beta)\rightarrow C_\alpha=C_\beta\cap\alpha)\wedge\\
&\wedge\forall\alpha\,( \otp(C_\alpha)\leq\omega_1)\\
&].
\end{align*}
\end{quote}
$\Box_{\omega_2}$ is forcible by very nice forcings (countably directed and $<\omega_2$-strategically closed), 
and its negation is forcible by $\Coll(\omega_1,<\delta)$ whenever $\delta$ is supercompact.

In particular the $\Pi_1$-theory for $\tau_{\omega_2}$ of any forcing extension $V[G]$ of $V$ can be 
destroyed in a further forcing extension $V[G][H]$, hence is not invariant across forcing extensions of $V$ in any possible sense, assuming large cardinals in $V$.

Theorems \ref{Thm:mainthm-3}, \ref{Thm:mainthm-4}, \ref{Thm:mainthm-7}, \ref{Thm:mainthm-6} show that the strong form of consistency given by (\ref{Thm:mainthm-3-2}) of Theorem \ref{Thm:mainthm-3} can characterize forcibility 
(at least for $\Pi_2$-sentences in the appropriate signature), if large cardinals enter the picture.


\subsubsection*{Model companionship and generic absoluteness}
The first order theory of $\pow{\kappa}$ for any infinite $\kappa$ is very sensitive to forcing; 
but this depends on two parameters: whether or not we assume large cardinals, 
and what is the signature in which we look at the first order theory of $\pow{\kappa}$.

Theorem \ref{Thm:mainthm-8}
shows that we can 
``tune'' the signature $\sigma$ so that for any $\sigma$-theory $T$ extending $\ZFC+$\emph{large cardinals}: 
\begin{itemize}
\item
the signature is expressive (i.e. many questions of second or third order arithmetic can be encoded by
simple sentences, i.e. $\Pi_2$-sentences for $\sigma$);
\item the signature is not too expressive (i.e. the questions of second or third order arithmetic whose truth value
can be changed by means of forcing cannot be encoded by $\Pi_1$-sentences for $\tau$; in particular the $\Pi_1$-fragment of $\ZFC$ in the new signature is invariant across the generic multiverse, cfr. Thm. \ref{Thm:mainthm-8}).
\end{itemize}
These two conditions entail that \ref{Thm:mainthm-3}(\ref{Thm:mainthm-3-3}) implies 
\ref{Thm:mainthm-3}(\ref{Thm:mainthm-3-2}) (respectively \ref{Thm:mainthm-4}(\ref{Thm:mainthm-4-3}) implies \ref{Thm:mainthm-4}(\ref{Thm:mainthm-4-2})).
Generic absoluteness results give that
\ref{Thm:mainthm-3}(\ref{Thm:mainthm-3-3}) is equivalent to 
\ref{Thm:mainthm-3}(\ref{Thm:mainthm-3-1}) (respectively \ref{Thm:mainthm-4}(\ref{Thm:mainthm-4-3}) is equivalent \ref{Thm:mainthm-4}(\ref{Thm:mainthm-4-1})).

Model completeness of the relevant theories gives the missing implication from (1) to (3) of 
Theorems \ref{Thm:mainthm-3}, \ref{Thm:mainthm-4}, \ref{Thm:mainthm-7}, \ref{Thm:mainthm-6}.

\subsubsection*{Model companionship and generic absoluteness for second order number theory}
The standard argument used in set theory to assert that $\Delta_0$-properties are simple, is 
their invariance between transitive models, which in turns imply that their 
truth values cannot be changed by means of forcing.

Now consider second order number theory i.e.: the theory of the structure
$(\pow{\omega},\in)$; modulo the by-interpretation which identifies a hereditarily countable set
with the graph of the transitive closure of its singleton (see Section \ref{sec:Hkappa+}), the theory of 
$(\pow{\omega},\in)$
has the same set of theorems as the
first order theory of the structure $(H_{\omega_1},\tau_{\ST})$, which in turns
(by Fact \ref{fac:basicfact}) has the same set of theorems as the structure $(H_{\omega_1},\sigma_{\omega})$.
The first order theory of $H_{\omega_1}$ in any of these signatures
can vary (by means of forcing) if one denies the existence of large cardinals (for example there can be lightface definable projective well-orders, or not): on the other hand 
a major result of Woodin 
is that assuming large cardinals, the first order theory of $(H_{\omega_1},\in)$
is invariant with respect to forcing. 
The equivalence of (\ref{Thm:mainthm-3-3}) and (\ref{Thm:mainthm-3-2}) in Theorem \ref{Thm:mainthm-3}
says that this theory is fixed by any reasonable method
to produce its models, not just forcing.

Now we combine these results with the clear picture given by projective determinacy of the theory of projective sets:
much in the same way we accept bounded formulae as ``simple'' predicates and make them equivalent to
atomic formulae by means of $\ZFC_{\ST}$, if we accept as true large cardinal axioms, we are forced to consider projective sets of reals as ``simple'' predicates; $\ZFC_\omega$
includes them among the atomic predicates. 
Once we do so the first order theory
of $H_{\omega_1}$ is ``tame'' i.e. model complete, hence
it realizes all $\Pi_2$-sentences which are 
consistent with its universal fragment (cfr. Fact \ref{fac:charKaihull}); moreover
large cardinals make provably true many of these $\Pi_2$-sentences, for example projective determinacy.

\subsubsection*{Model companionship and generic absoluteness for the theory of $\pow{\omega_1}$}
Theorem \ref{Thm:mainthm-4} extends the above considerations to the signature 
$\sigma_{\omega,\NS_{\omega_1}}$. In this case  a theory $T$ extending $\ZFC+$\emph{large cardinals} 
is just able to say that:
\begin{itemize}
\item
a $\Pi_2$-sentence for $\sigma_{\omega,\NS_{\omega_1}}$ is
consistent with the universal fragment of $T$  if it is $T$-provably forcible
(cfr.  \ref{Thm:mainthm-4-3} implies 
\ref{Thm:mainthm-4-2} of Thm. \ref{Thm:mainthm-4}, see Remark \ref{rmk:keyrembis} for a proof).
\item
The theory $T^*$ given by all $\Pi_2$-sentences 
$\psi$ for $\sigma_{\omega,\NS_{\omega_1}}$ 
such that $\psi^{H_{\omega_2}}$ is provably forcible is consistent (cfr.  \ref{Thm:mainthm-4-3} implies \ref{Thm:mainthm-4-1} of Thm. \ref{Thm:mainthm-4}, one of the main results of Woodin on $\Pmax$ \cite[Thm. 7.3]{HSTLARSON}).
\item
Recently Asper\`o and Schindler proved that $\MM^{++}$ implies $(*)$-$\UB$ \cite{ASPSCH(*)}. An immediate corollary of their 
result is that $T^*$ holds in the $H_{\omega_2}$ of models of $\MM^{++}$. This result allow to prove the missing implications in Thm. \ref{Thm:mainthm-4} (or in Thm. \ref{Thm:mainthm-7}, \ref{Thm:mainthm-6}).
\end{itemize}
We will prove the assertions in the above items using
Thm. \ref{Thm:mainthm-4}, \ref{Thm:mainthm-7}, \ref{Thm:mainthm-6} 
and Asper\`o and Schindler's result in a sequel of this paper.


\subsubsection*{Model completeness and bounded forcing axioms}
Let us now spend some more words relating model completeness to 
bounded forcing axioms and $\stUB$.
 Model companionship and model completeness capture in a model theoretic property the notion of ``generic'' structure for the models of a theory; this notion is 
 recurrent in various domains (not only restricted to model theory), we 
 mention two occurring in model theory:  in many cases the 
 Fraisse limit of a given family $\mathcal{F}$ of finite(ly generated) structures for a signature $\tau$ 
 is generic for the structures in $\mathcal{F}$; the algebraically closed field 
 are generic with respect to the class of fields.
Generic structures of a universal theory $T$
 realize as many $\Pi_2$-properties as it is consistently possible while remaining a model of $T$.  
The standard examples of generic structures for a first order theory $T$ are given by 
 $T$-existentially closed model, i.e. models which are 
$\Sigma_1$-substructures of any superstructure which realizes 
(the universal fragment of) $T$.
We will make this rigorous in
 Section \ref{sec:modth}.  
 
 Compare these observations with the formulation of bounded forcing axioms as principles of generic absoluteness
 (as done by Bagaria in \cite{BAG00}) stating that $H_{\omega_2}^V$ is a $\Sigma_1$-substructure of any generic extension of $V$ obtained by forcings in the appropriate class. 
 
 In essence Theorem \ref{Thm:mainthm-4} and Thm. \ref{Thm:mainthm-7} outline that forcing 
 axioms provide means to produce models of $H_{\omega_2}$ which are existentially closed 
 for their universal theory
 and realize as many $\Pi_2$-sentences as the iteration theorems producing them makes possible.

\subsubsection*{Why  $\mathsf{CH}$ is false}
Summing up on the above considerations, we believe we can give a strong argument against $\mathsf{CH}$:
\begin{quote}
Assume we adopt the stance that:
\begin{itemize} 
\item
Large cardinal axioms are true.
\item
We consider set theory as formalized by a \emph{definable extension} $T$ of $\ZFC+$\emph{large cardinals}
in a signature $\sigma$
where $\mathsf{CH}$ can be correctly formalized, i.e. $T$ is a definable extension of $\ZFC_{\omega_1}$
in the signature $\sigma\supseteq\tau_{\ST}\cup\bp{\omega_1}$ including a constant symbol for the first uncountable cardinal, so that:
\begin{itemize} 
\item
$\neg\mathsf{CH}$ is formalized by a $\Pi_2$-sentence for $\tau_{\ST}\cup\bp{\omega_1}$, 
(cfr. Remark \ref{Rem:keyrem}(\ref{Rem:keyrem-5})).
\item
The $\Pi_1$-fragment of $T$ is invariant across forcing extensions (so that the basic facts about 
$\pow{\omega_1}$ ---i.e those expressible by $\Pi_1$-sentences for $\sigma$--- are not changed by means of forcing (cfr.
$(\ref{Thm:mainthm-4-3})$ implies $(\ref{Thm:mainthm-4-2})$ of Thm. \ref{Thm:mainthm-4} holds for $T$).
\end{itemize} 
\end{itemize}

Furthermore to select which among all possible $T$ in the signature $\sigma$ 
gives the true ``axiomatization'' of set theory, 
we adopt the following criteria:
\begin{itemize}
\item
$T$
should maximize
the family of $\Pi_2$-sentences for $\sigma$ which are consistent with its $\Pi_1$-consequences (cfr. Thm. \ref{Thm:mainthm-4}(\ref{Thm:mainthm-4-2}));
\item 
there should be a simple and manageable axiom system for $T$ (cfr. Thm. \ref{Thm:mainthm-4}(\ref{Thm:mainthm-4-1}) or even $T$ has a model companion $T^*$).
\end{itemize}
With these premises, we conclude that Theorem \ref{Thm:mainthm-4}
(also with Thm. \ref{Thm:mainthm-7}, \ref{Thm:mainthm-6})
implies that
$\mathsf{CH}$ is false (since $\neg\mathsf{CH}$ is provably forcible from $T$).
\end{quote}

We can further reinforce our case by remarking
that: 
\begin{itemize}
\item The same assumptions on $T$ and $T^*$ entail
$2^{\aleph_0}=\aleph_2$ holds
in any $\tau$-model $\mathcal{N}$ of $T_\forall+\ZFC$ 
in which $T^*$ holds in $H_{\omega_2}^{\mathcal{N}}$:
$2^{\aleph_0}=\aleph_2$ is not a $\Pi_2$-sentence for $\tau_{\omega_1}$, 
but it is a consequence of $\Pi_2$-sentences for $\tau_{\ST}\cup\bp{\omega_1}$ which hold assuming $\mathsf{BPFA}$.
One such sentence is given by Caicedo and Velickovic in
\cite{CAIVEL06}:
\[
\forall \mathcal{C}\text{ ladder system on $\omega_1$}\,\forall r\subseteq\omega\,
\exists \alpha\,\exists f\;[(f:\omega_1\to \alpha\text{ is surjective})\wedge\psi(\mathcal{C},r,\alpha)]
\]
where $\psi(x,y,z)$ is a $\Sigma_1$-formula for $\tau_{\ST}\cup\bp{\omega_1}$ which can be used to define
for each
ladder system $\mathcal{C}$ an injective map $\pow{\omega}\to \omega_2$
with assignment $r\mapsto \alpha$ of the real $r$ to the ordinal $\alpha$ least such that
$\psi(\mathcal{C},r,\alpha)$.

\item The signature $\tau_{\NS_{\omega_1}}\cup\UB$ 
makes the $\Pi_1$-theory of $V$ invariant across the generic multiverse (cfr. Thm. \ref{Thm:mainthm-8}); 
hence we can use forcing
to detect which $\Pi_2$-sentences should belong to the model companion of set theory in any signature
$\tau\subseteq \tau_{\NS_{\omega_1}}\cup\UB$ (if such a model companion exists); this is exactly the argument we used to argue for $\neg\mathsf{CH}$.

\item $\stUB$ with a weak form of sharp for universally Baire sets can be equivalently formulated as the assertion that the $\tau_{\NS_{\omega_1}}\cup\UB$-theory of $V$ has as model companion the 
$\tau_{\NS_{\omega_1}}\cup\UB$-theory of $H_{\omega_2}$ (cfr. Thm. \ref{Thm:mainthm-7}). This brings to light the complete accordance between the philosophy driving $\Pmax$ and bounded forcing axioms (that of maximizing the $\Pi_2$-sentences true in $H_{\omega_2}$) with the notion of model companionship.
\end{itemize}
%

 

\subsubsection*{Model companionship in set theory}
Model companionship is a tameness notion which must 
be handled with care 
(see Section \ref{subsec:tameness-modcompan}).
We believe that the present paper presents a 
reasonable test to gauge the tameness of this notion:
in set theory we are focusing mostly in two types of structures:
 generic extensions $V[G]$ of the universe of sets $V$ produced 
by (certain types of) forcings $P$, and the theory of $H_\lambda^{V[G]}$ of these 
generic extensions for suitably chosen
(and definable) cardinals $\lambda$. 
We often study
these structures working in signatures $\tau$ maintaining that
$V\sqsubseteq V[G]$ and 
$H_{\lambda}^{V[G]}\prec_1 V[G]$ also for $\tau$
(in particular here and in a huge number of works one consider the case of 
$\tau$ being $\tau_{\NS_{\omega_1}}$, $P$ being a stationary set preserving forcing, 
$\lambda$ being $\omega_2$, or the case $\tau$ being $\sigma_{\omega}$, 
$P$ being any forcing, $\lambda$ being $\omega_1$).
The results of the present paper (and of its sequel) show that the axiomatization of \emph{set theory+large cardinals}
in these signatures is well behaved: first of all the models of its $\Pi_1$-fragment include all the structures of interest, i.e. all generic extensions of $V$ (eventually obtained by forcing of a certain kind), and all the initial segments of these generic extensions containing a large enough chunk of the universe. 
Moreover this theory admits a model companion and this model companion is uniquely determined by the family of 
$\Pi_2$-sentences which we can provably force to hold in the appropriate 
$H_\lambda^{V[G]}$ (with $\lambda=\omega_1$ or $\lambda=\omega_2$ decided by the signature).
It has also to be noted that even the substructure relation is not that much affected by forcing; for example  any $G$ 
$V$-generic for a stationary set preserving forcing $P$
maintains that $V\sqsubseteq V[G]$ also for the signature
$\sigma_{\omega,\NS_{\omega_1}}$. (May be surprisingly) Thm. \ref{Thm:mainthm-8}
shows that if $P$ is not stationary set preserving 
$V\sqsubseteq V[G]$ fails for 
$\sigma_{\omega,\NS_{\omega_1}}$, nonetheless $V$ and $V[G]$ will satisfy exactly the same $\Pi_1$-sentences for $\sigma_{\omega,\NS_{\omega_1}}$.

It is in our eyes surprising the perfect matching existing
between generic absoluteness results and the notion of model companionship
which the present paper reveals. 

\section{The theory of $H_{\kappa^+}$ is the model companion of set theory}\label{sec:Hkappa+}

\begin{notation}\label{not:modthnot2}
Given a $\in$-structure $(M,E)$ and $\tau$ a signature
among $\tau_\ST,\sigma_\kappa,\dots$,
from now we let
$(M,\tau^M)$ be the unique extension of 
$(M,E)$ defined in accordance with 
Notation \ref{not:keynotation} and Fact \ref{fac:basicfact}.
In particular $(M,\tau^M)$ is a shorthand for 
$(M,S^M:S\in\tau)$.
If $(N,E)$ is a substructure of $(M,E)$ we also write
$(N,\tau^M)$ as a shorthand for 
$(N,S^M\restriction N:S\in\tau)$.
\end{notation}

\subsection{By-interpretability of the first order theory of $H_{\kappa^+}$
with the first order theory of $\pow{\kappa}$}
\label{subsec:secordequiv}

Let's compare the first order theory of the structure
\[
(\pow{\kappa},S_\phi^V:\phi\in\tau_{\ST})
\]
with 
that of the $\tau_\ST$-theory of $H_{\kappa^+}$ in models of $\ZFC_{\ST}$. 
We will show that they
are $\ZFC_{\tau_{\ST}}$-provably by-interpretable with a by-interpetation translating $H_{\kappa^+}$ in a $\Pi_1$-definable subset of  $\pow{\kappa^2}$ and
atomic predicates into 
$\Sigma_1$-relations over this set. 
This result is the key to the proof of 
Thm. \ref{Thm:mainthm-1} and it is just outlining the model theoretic consequences of the well-known fact that sets can be coded by well-founded extensional graphs.

\begin{definition}
Given $a\in H_{\kappa^+}$, $R\in \pow{\kappa^2}$ codes $a$, if
$R$ codes a well-founded extensional relation on 
some $\alpha\leq\kappa$ with top element $0$
so that the transitive collapse mapping of $(\alpha,R)$ maps $0$ to $a$.

\begin{itemize}
\item
$\WFE_\kappa$ is the set of $R\in \pow{\kappa}$ which 
are a well founded extensional relation with domain 
$\alpha\leq\kappa$ and top element $0$.
\item
 $\Cod_\kappa:\WFE_\kappa\to H_{\kappa^+}$ is the map assigning $a$ to $R$ if and only if 
 $R$ codes $a$.
\end{itemize}
\end{definition}

The following theorem shows that the structure $(H_{\kappa^+},\in)$ is interpreted by means of ``imaginaries'' in the structure
$(\pow{\kappa},\tau_{\ST}^V)$ by means of:
\begin{itemize}
    \item a universal $\tau_\ST\cup\bp{\kappa}$-formula (with quantifiers
    ranging over subsets of $\kappa^{<\omega}$)
    defining a set $\WFE_\kappa\subseteq\pow{\kappa^2}$.
    \item an equivalence relation $\cong_\kappa$ on $\WFE_\kappa$
    defined by an existential $\tau_\ST\cup\bp{\kappa}$-formula (with quantifiers
    ranging over subsets of $\kappa^{<\omega}$)
    \item A binary relation $E_\kappa$ on $\WFE_\kappa$
    invariant under $\cong_\kappa$ representing the $\in$-relation as the extension of 
an existential $\tau_\ST\cup\bp{\kappa}$-formula (with quantifiers
    ranging over subsets of $\kappa^{<\omega}$)\footnote{See \cite[Section 25]{JECHST} for proofs of the case $\kappa=\omega$; in particular the statement and proof of Lemma 25.25 and the proof of \cite[Thm. 13.28]{JECHST} contain all ideas on which one can elaborate to draw the conclusions of Thm.~\ref{thm:keypropCod}.}.
\end{itemize}
\begin{theorem}\label{thm:keypropCod}
Assume $\ZFC^{-}_{\kappa}$. The following holds\footnote{Many transitive supersets of $H_{\kappa^+}$ are 
$\tau_\ST\cup\bp{\kappa}$-model of $\ZFC^{-}_{\kappa}$ for $\kappa$ an infinite cardinal (see \cite[Section IV.6]{KUNEN}). 
To simplify notation we assume to have fixed a transitive 
$\tau_\ST\cup\bp{\kappa}$-model $\mathcal{N}$ of $\ZFC^{-}_\kappa$
with domain $N\supseteq H_{\kappa^+}$. The reader can easily realize that all these statements holds for an arbitrary model $\mathcal{N}$ of $\ZFC^-_\kappa$ replacing $H_{\kappa^+}$ with its version according to $\mathcal{N}$.}:
 \begin{enumerate}
\item
The map $\mathrm{Cod}_\kappa$ and $\WFE_\kappa$ are defined by $\ZFC^-_\kappa$-provably 
$\Delta_1$-properties  in parameter $\kappa$. Moreover $\Cod_\kappa:\WFE_\kappa\to H_{\kappa^+}$
is surjective (provably in $\ZFC^{-}_{\kappa}$), and
$\WFE_\kappa$ is defined by a universal 
$\tau_\ST\cup\bp{\kappa}$-formula with quantifiers
ranging over subsets of $\kappa^{<\omega}$.
\item 
There are existential $\tau_\ST\cup\bp{\kappa}$-formulae (with quantifiers
    ranging over subsets of $\kappa^{<\omega}$), $\phi_\in,\phi_=$ such that
for all $R,S\in \WFE_\kappa$, $\phi_=(R,S)$ if and only if $\Cod_\kappa(R)=\Cod_\kappa(S)$ and 
$\phi_\in(R,S)$  if and only if $\Cod_\kappa(R)\in\Cod_\kappa(S)$. In particular letting
\[
E_\kappa=\bp{(R,S)\in \WFE_\kappa: \phi_\in(R,S)},
\]
\[
\cong_\kappa=\bp{(R,S)\in \WFE_\kappa: \phi_=(R,S)},
\]
$\cong_\kappa$ is a $\ZFC^-_\kappa$-provably definable equivalence relation, $E_\kappa$ respects it, and
\[
(\WFE_\kappa/_{\cong_\kappa}, E_\kappa/_{\cong_\kappa})
\]
is 
isomorphic to $(H_{\kappa^+},\in)$ via the map $[R]\mapsto \Cod_\kappa(R)$.
\end{enumerate}
\end{theorem}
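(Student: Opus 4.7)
The plan is to show the theorem by unpacking the standard fact that a set in $H_{\kappa^+}$ is coded by a well-founded extensional relation on (a cardinal at most) $\kappa$ via the Mostowski collapse, and then carefully tracking the logical complexity of every predicate involved. The fundamental observation is that since $\kappa$, $\kappa^2$, and more generally any set of the form $\kappa^n$ embed into $\kappa^{<\omega}$, every witness we shall need (an extensional relation on $\alpha\le\kappa$, a rank function, a Mostowski collapse, an isomorphism between two coded relations) can be coded as an element of $\pow{\kappa^{<\omega}}$, and so existential quantifiers over such witnesses can always be taken to range over $\pow{\kappa^{<\omega}}$.

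For the first part, I would first write down the universal $\tau_\ST\cup\bp{\kappa}$-formula defining $\WFE_\kappa$. Extensionality, having top element $0$, and $R\subseteq\alpha\times\alpha$ for some $\alpha\le\kappa$ are all expressible by $\Delta_0$-formulae in $(R,\kappa)$. Well-foundedness is written as the $\Pi_1$-sentence
\[
\forall X\subseteq \kappa^{<\omega}\,\bigl[(X\subseteq\alpha\wedge X\ne\emptyset)\rightarrow \exists x\in X\,\forall y\in X\,\neg(y\mathrel{R}x)\bigr],
\]
whose single quantifier ranges over $\pow{\kappa^{<\omega}}$. For the $\Sigma_1$ half of $\Delta_1$-definability, I would give the equivalent existential formulation asserting the existence of a rank function $\rho\subseteq\kappa\times\kappa$ strictly increasing along $R$ (or, equivalently, the existence of the Mostowski collapse itself coded as a set). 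Surjectivity of $\Cod_\kappa$ is then routine: given $a\in H_{\kappa^+}$ pick an enumeration $e:\alpha\to\trcl(\{a\})$ with $e(0)=a$ and set $\xi R \eta\iff e(\xi)\in e(\eta)$.

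For $\Cod_\kappa$, $\phi_=$, and $\phi_\in$ I would use the uniqueness of the Mostowski collapse. Define
\[
\phi_=(R,S)\equiv \exists f\subseteq\kappa^{<\omega}\,\bigl[f:\alpha_R\to\alpha_S\text{ is a bijection with }f(0)=0\text{ and }\forall x,y\,(x\mathrel{R}y\leftrightarrow f(x)\mathrel{S}f(y))\bigr],
\]
and
\[
\phi_\in(R,S)\equiv \exists \beta\,\exists f\subseteq\kappa^{<\omega}\,\bigl[\beta\mathrel{S}0\wedge f:\alpha_R\to\alpha_S\text{ injective}, f(0)=\beta, \text{ and } f\text{ an isomorphism of }R\text{ onto }S\!\upharpoonright\!\text{down-closure}(\beta)\bigr].
\]
Both clauses in the big bracket are $\Delta_0$ in $(R,S,f,\beta,\kappa)$, so these are existential $\tau_\ST\cup\bp{\kappa}$-formulae with outer quantifier restricted to $\pow{\kappa^{<\omega}}$. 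That $\phi_=$ and $\phi_\in$ compute the intended relations is the statement that the Mostowski collapses of $R$ and $S$ coincide exactly when the coded relations are isomorphic (respectively, that $R$'s collapse lies in $S$'s collapse exactly when $R$ is isomorphic to the sub-relation below some $\beta\mathrel{S}0$). The same kind of $\Sigma_1$-formula with outer quantifier in $\pow{\kappa^{<\omega}}$ gives a definition of the graph of $\Cod_\kappa$; combining it with uniqueness of the Mostowski collapse yields the matching $\Pi_1$-definition, hence $\Cod_\kappa$ is $\Delta_1$. Finally, the verification that $\cong_\kappa$ is an equivalence relation respected by $E_\kappa$ and that the quotient is isomorphic to $(H_{\kappa^+},\in)$ via $[R]\mapsto\Cod_\kappa(R)$ follows by transporting along the Mostowski collapse.

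The main obstacle, though not deep, will be the bookkeeping in paragraph three: one must verify that every witness invoked (the rank function inside the $\Sigma_1$ half of $\WFE_\kappa$, the Mostowski collapse in the definition of $\Cod_\kappa$, the isomorphisms inside $\phi_=$ and $\phi_\in$, and the sub-relation $S\!\upharpoonright\!\text{down-closure}(\beta)$) really is coded by an element of $\pow{\kappa^{<\omega}}$, and that the corresponding $\Delta_0$-matrix can be written using only the symbols of $\tau_\ST\cup\bp{\kappa}$ with the quantifiers over set elements bounded to $\alpha_R,\alpha_S\subseteq\kappa$. Both points reduce to the size bound $|\trcl(\{a\})|\le\kappa$ for $a\in H_{\kappa^+}$ together with the fact that $\kappa^n\subseteq\kappa^{<\omega}$, so once this is observed the rest of the argument is a direct, if slightly tedious, induction on the complexity of the bounded formulae involved.
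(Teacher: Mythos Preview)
Your proposal is correct and follows essentially the same route as the paper: code sets by well-founded extensional pointed relations on $\kappa$, express extensionality and the top-element condition by $\Delta_0$-formulae, express well-foundedness by a single universal quantifier over $\pow{\kappa^{<\omega}}$, and define $\phi_=$, $\phi_\in$ via the existence of an isomorphism (respectively, an isomorphism onto the downward closure of some $\beta\mathrel{S}0$). The only cosmetic difference is that the paper writes well-foundedness as ``no $\omega$-descending sequence'' (universally quantifying over functions $f\subseteq\kappa^2$) rather than your ``every nonempty subset has an $R$-minimal element''; both are $\Pi_1$ with the outer quantifier ranging over $\pow{\kappa^{<\omega}}$, and they are equivalent in $\ZFC^-_\kappa$.
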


\begin{proof}
A detailed proof requires a careful examination of the syntactic properties of $\Delta_0$-formulae, in line with the one carried in Kunen's \cite[Chapter IV]{KUNEN}.
We outline the main ideas, 
following Kunen's book terminology for certain 
set theoretic operations on sets, functions and relations (such as $\dom(f),\ran(f)$, $\text{Ext}(R)$, etc). 
To simplify the notation, we prove the results
for a transitive model $(N,\in)$ which is then extended
to a structure $(N,\tau_\ST^N,\kappa^N)$ which 
models $\ZFC^-_\kappa$, and whose domain contains $H_{\kappa^+}$. 
The reader can verify by itself that the argument 
is modular and works for any other model of 
$\ZFC^-_\kappa$ 
(transitive or ill-founded, containing the ``true'' $H_{\kappa^+}$ or not). 
\begin{enumerate} 
\item This is proved in details in \cite[Chapter IV]{KUNEN}.
To define $\WFE_\kappa$ by a universal property over subsets of $\kappa$
and $\Cod_\kappa$ by a $\Delta_1$-property over $H_{\kappa^+}$, we proceed
as follows:
\begin{itemize}
\item
\emph{$R$ is an extensional relation with domain contained in 
$\kappa$ and top element $0$}
is defined by the $\tau_\ST\cup\bp{\kappa}$-atomic formula 
$\psi_{\mathrm{EXT}}(R)$ $\ZFC^{-}_\kappa$-provably equivalent to the $\Delta_0(\kappa)$-formula:
\begin{align*}
(R\subseteq\kappa^2)\wedge \\
\wedge (\text{Ext}(R)\in\kappa\vee \text{Ext}(R)=\kappa)
\wedge\\
\wedge\forall \alpha,\beta\in\text{Ext}(R)\,
[\forall u\in\text{Ext}(R)\,(u\mathrel{R}\alpha\leftrightarrow u\mathrel{R}\beta)\rightarrow (\alpha=\beta)]\wedge\\
\wedge \forall \alpha\in\text{Ext}(R)\,\neg (0\mathrel{R}\alpha).
\end{align*}
\item 
$\WFE_\kappa$ is defined by the universal $\tau_\ST\cup\bp{\kappa}$-formula 
$\phi_{\WFE_\kappa}(R)$ (quantifying only over subsets of $\kappa^{<\omega}$)
\begin{align*}
\psi_{\mathrm{EXT}}(R)\wedge \\
\wedge [\forall f\subseteq \kappa^2\,(f\text{ is a function }\rightarrow\exists n\in\omega \,
\neg(\ap{f(n+1),f(n)}\in R))].
\end{align*}
Its interpretation is the subset of $\pow{\kappa^{<\omega}}$ of the $\sigma_\kappa$-symbol 
$S_{\phi_{\WFE_\kappa}}$. 

\item To define $\Cod_\kappa$,
consider the $\tau_\ST\cup\bp{\kappa}$-atomic formula 
$\psi_{\Cod}(G,R)$ provably equivalent to the $\tau_\ST\cup\bp{\kappa}$-formula:
\begin{align*}
\psi_{\mathrm{EXT}}(R)\wedge\\
\wedge (G\text{ is a function})\wedge\\
\wedge (\dom(G)=\text{Ext}(R))\wedge\\ \wedge\forall\alpha,\beta\in\text{Ext}(R)\,[\alpha\mathrel{R}\beta\leftrightarrow G(\alpha)\in G(\beta)].
\end{align*}

Then $\Cod_\kappa(R)=a$ can be defined either by the existential $\tau_\ST\cup\bp{\kappa}$-formula\footnote{Given an $R$ such that $\psi_{\mathrm{EXT}}(R)$ holds,
\emph{$R$ is a well founded relation} holds in a model of 
$\ZFC^-_\kappa$
if and only if 
$\Cod_\kappa$ is defined on $R$. In the theory $\ZFC^-_\kappa$, $\WFE_\kappa$ can be defined using a universal property 
by a $\tau_\ST\cup\bp{\kappa}$-formula quantifying only over subsets of $\kappa$. On the other hand if we allow arbitrary quantification over elements 
of $H_{\kappa^+}$, we can express the well-foundedness of $R$ also using the existential formula 
$\exists G\,\psi_{\Cod_\kappa}(G,R)$. This is why $\WFE_\kappa$ is defined by a universal 
$\tau_\ST\cup\bp{\kappa}$-property in the structure $(\pow{\kappa},\tau_\ST^V,\kappa)$,
while the graph of $\Cod_\kappa$ can be defined by a $\Delta_1$-property for $\tau_\ST\cup\bp{\kappa}$
in the structure $(H_{\kappa^+},\tau_\ST^V,\kappa^V)$.} 
\[
\exists G\,(\psi_{\Cod}(G,R)\wedge G(0)=a)
\]
or by the universal $\tau_\ST\cup\bp{\kappa}$-formula 
\[
\forall G\,(\psi_{\Cod}(G,R)\rightarrow G(0)=a).
\]
\end{itemize}

\item The equality relation in $H_{\kappa^+}$ is transferred to the isomorphism relation
between elements of $\WFE_\kappa$: if $R,S$ are well-founded extensional on $\kappa$ with a top-element,
the Mostowski collapsing theorem entails that $\Cod_\kappa(R)=\Cod_\kappa(S)$ if and only if 
$(\mathrm{Ext}(R),R)\cong(\mathrm{Ext}(S),S)$. 
Isomorphism of the two structures $(\mathrm{Ext}(R),R)\cong(\mathrm{Ext}(S),S)$ is expressed by the $\Sigma_1$-formula
for $\tau_{\kappa}$:
\[
\phi_=(R,S)\equiv 
\exists f\,(f \text{ is a bijection of $\kappa$ onto $\kappa$ and $\alpha R\beta$ if and only if 
$f(\alpha) S f(\beta)$}).
\]
In particular we get that $S_{\phi_=}(R,S)$ holds in $H_{\kappa^+}$ for $R,S\in \WFE_\kappa$
if and only if $\Cod_\kappa(R)=\Cod_\kappa(S)$.

Similarly one can express $\Cod_\kappa(R)\in\Cod_\kappa(S)$ by the $\Sigma_1$-property $\phi_\in$
in $\tau_{\kappa}$
stating that
$(\mathrm{Ext}(R),R)$ is isomorphic to $(\mathrm{pred}_S(\alpha),S)$ for some $\alpha\in\kappa$ with $\alpha \mathrel{S}0$, 
where $\mathrm{pred}_S(\alpha)$ is given by the elements
of $\mathrm{Ext}(S)$ which are connected by a finite path to $\alpha$. 

Moreover letting $\cong_\kappa\subseteq \WFE_\kappa^2$ denote the isomorphism relation between elements of $\WFE_\kappa$
and $E_\kappa\subseteq \WFE_\kappa^2$ denote the relation which translates into the $\in$-relation via $\Cod_\kappa$, 
it is clear that $\cong_\kappa$ is a congruence relation
over $E_\kappa$, i.e.: if $R_0 \cong_\kappa R_1$ and $S_0\cong_\kappa S_1$,
$R_0 \mathrel{E_\kappa} S_0$ if and only if $R_1 \mathrel{E_\kappa}  S_1$.

This gives that the structure $(\WFE_\kappa/_{\cong_\kappa}, E_\kappa/_{\cong_\kappa})$ is 
isomorphic to $(H_{\kappa^+},\in)$ via the map $[R]\mapsto \Cod_\kappa(R)$ 
(where $\WFE_\kappa/_{\cong_\kappa}$ is the set
of equivalence classes of $\cong_\kappa$ and the quotient relation $[R] \mathrel{E_\kappa/_{\cong_\kappa}} [S]$ holds 
if and only if $R \mathrel{E_\kappa}  S$).

This isomorphism is defined via the map $\Cod_\kappa$, which is by itself defined by 
a $\ZFC^-_\kappa$-provably $\Delta_1$-property for $\tau_\ST\cup\bp{\kappa}$.

The very definition of $\WFE_\kappa,\cong_\kappa,E_\kappa$ show that
\[
\WFE_\kappa=S_{\phi_{\WFE_\kappa}}^{N},
\]
\[
\cong_\kappa=S_{\phi_{\WFE_\kappa}(x)\wedge \phi_{\WFE_\kappa}(y)\wedge \phi_{=}(x,y)}^{N},
\]
\[
E_\kappa=S_{\phi_{\WFE_\kappa}(x)\wedge \phi_{\WFE_\kappa}(y)\wedge \phi_{\in}(x,y)}^{N}.
\]
\end{enumerate}
\end{proof}

\subsection{Model completeness for the theory of $H_{\kappa^+}$}

\begin{theorem}\label{thm:modcompHkappa+}
Any $\sigma_\kappa$-theory $T$ extending 
\[
\ZFC^{*-}_\kappa\cup\bp{\text{all sets have size $\kappa$}}
\]
is model complete.
\end{theorem}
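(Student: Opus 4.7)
The plan is to establish model completeness by showing that, under $T$, every $\sigma_\kappa$-formula $\phi(x_1,\dots,x_n)$ admits a $T$-equivalent $\Sigma_1$-formula in $\sigma_\kappa$; applying the same recipe to $\neg\phi$ simultaneously yields a $\Pi_1$-equivalent, which gives one of the standard characterizations of model completeness (equivalently, that every $\sigma_\kappa$-embedding between models of $T$ is elementary). By Fact~\ref{fac:basicfact} it suffices to handle $\in$-formulas augmented with the constant $\kappa$. The guiding idea is that the hypothesis \emph{all sets have size $\kappa$} forces every element of a model of $T$ to lie in the range of the coding map $\Cod_\kappa\colon\WFE_\kappa\to V$ supplied by Theorem~\ref{thm:keypropCod}: quantifiers over the universe may thus be replaced by quantifiers over subsets of $\kappa^{<\omega}$, which are precisely those that the defining axioms of $T_\kappa$ absorb into atomic predicates $S_\psi$.

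Given $\phi(x_1,\dots,x_n)$, the first step is to define by recursion on complexity an $\in$-formula $\tilde\phi(R_1,\dots,R_n)$ obtained by replacing each subformula $x=y$ with $\phi_{=}(R_x,R_y)$, each $x\in y$ with $\phi_{\in}(R_x,R_y)$, and each quantifier $Qx$ with $QR\in\WFE_\kappa$. Theorem~\ref{thm:keypropCod}, combined with the fact that $\cong_\kappa$ is a congruence for $E_\kappa$, then guarantees for all $\vec R\in\WFE_\kappa$ the equivalence
\[
\tilde\phi(R_1,\dots,R_n)\leftrightarrow \phi(\Cod_\kappa(R_1),\dots,\Cod_\kappa(R_n)).
\]
Since every quantifier of $\tilde\phi$ ranges only over $\pow{\kappa^{<\omega}}$, there is an $\in$-formula $\psi$ such that $\tilde\phi$ agrees, on inputs in $\pow{\kappa^{<\omega}}$, with $\psi^{\pow{\kappa^{<\omega}}}$; by the defining axioms of $T_\kappa$ this is $T$-equivalent to the atomic $\sigma_\kappa$-formula $S_\psi(\vec R)$. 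Combining with the $\Sigma_1$-definability of the graph of $\Cod_\kappa$ and its surjectivity (both from Theorem~\ref{thm:keypropCod}), the final step is to conclude
\[
T\vdash \phi(\vec x)\leftrightarrow \exists\vec R\,\exists\vec G\,\Bigl(\bigwedge_i\bigl(S_{\phi_{\WFE_\kappa}}(R_i)\wedge \psi_{\Cod}(G_i,R_i)\wedge G_i(0)=x_i\bigr)\wedge S_\psi(\vec R)\Bigr),
\]
which is a $\Sigma_1$-formula for $\sigma_\kappa$ with atomic body, as required.

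I expect the main subtlety to be the inductive verification that $\phi\mapsto\tilde\phi$ really interprets $\phi$ through Theorem~\ref{thm:keypropCod}: one has to check that the passage through $\phi_=$ and $\phi_\in$ (rather than literal $=$ and $\in$) correctly tracks truth in the quotient structure $(\WFE_\kappa/{\cong_\kappa},E_\kappa/{\cong_\kappa})\cong (H_{\kappa^+},\in)$ at every step, and that all quantifiers introduced along the way remain inside $\pow{\kappa^{<\omega}}$ so that the quantifier-collapsing axioms of $T_\kappa$ apply to $\tilde\phi$ as a whole. Once these bookkeeping points are verified, no extra model-theoretic input is needed: the coding of Theorem~\ref{thm:keypropCod} together with the atomic predicates built into $\sigma_\kappa$ does all the work.
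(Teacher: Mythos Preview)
Your proposal is correct and follows essentially the same strategy as the paper: reduce arbitrary $\sigma_\kappa$-formulae to atomic ones via the coding of Theorem~\ref{thm:keypropCod}, then wrap the decoding step in a single block of quantifiers to apply Robinson's test. The only cosmetic difference is that the paper packages the final equivalence in universal form
\[
\phi(\vec{x})\;\leftrightarrow\;\forall \vec{R}\,\Bigl[\bigwedge_i \Cod_\kappa(R_i)=x_i\;\rightarrow\; S_{\theta_\phi}(\vec{R})\Bigr],
\]
using the existential definition of $\Cod_\kappa(R)=x$ in the antecedent to get a $\Pi_1$-formula directly, whereas you take the dual existential route and then pass to $\neg\phi$ to recover the $\Pi_1$-equivalent; both are the same argument.
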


\begin{proof}
To simplify notation, 
we conform to the assumption of the previous theorem, 
i.e. we assume that the model $(N,\in)$ which is uniquely extended to a model of 
$\ZFC^{*-}_\kappa+$\emph{ every set has size $\kappa$}
on which we work is a transitive superstructure of 
$H_{\kappa^+}$.

The statement \emph{every set has size $\kappa$} is satisified by a
$\ZFC^-_\kappa$-model $(N,\tau_\ST^V,\kappa)$ with 
$N\supseteq H_\kappa^+$ if and only if $N=H_{\kappa^+}$.
From now on we proceed assuming this equality.

By Robinson's test \ref{lem:robtest} it suffices to show that
for all $\in$-formulae $\phi(\vec{x})$ 
\[
\ZFC^-_\kappa+\text{ every set has size $\kappa$}\vdash
\forall\vec{x} \,(\phi(\vec{x})\leftrightarrow\psi_\phi(\vec{x})),
\]
for some universal $\sigma_\kappa$-formula $\psi_\phi$.

We will first define a recursive map 
$\phi\to\theta_\phi$ which maps $\Sigma_n$-formulae $\phi$ for $\bp{\in,\kappa}$ 
quantifying over all elements of
$H_{\kappa^+}$ to $\Sigma_{n+1}$-formulae $\theta_\phi$ for $\tau_\ST\cup\bp{\kappa}$
whose quantifier range just over subsets of $\kappa^{<\omega}$.

The proof of the previous theorem gave $\tau_\ST\cup\bp{\kappa}$-formulae
$\theta_{x=y}$, $\theta_{x\in y}$ such that
\[
S_{\theta_{x=y}}^{H_{\kappa^+}}=\cong_\kappa=\bp{(R,S)\in (\WFE_\kappa)^2:\, \Cod_\kappa(R)=\Cod_\kappa(S)},
\]
\[
S_{\theta_{x\in y}}^{H_{\kappa^+}}=E_\kappa=\bp{(R,S)\in (\WFE_\kappa)^2:\, \Cod_\kappa(R)\in\Cod_\kappa(S)}.
\]
Specifically (following the notation of that proof)
\[
\theta_{x=y}=\phi_{\WFE_\kappa}(x)\wedge \phi_{\WFE_\kappa}(y)\wedge \phi_{=}(x,y),
\]
\[
\theta_{x\in y}=\phi_{\WFE_\kappa}(x)\wedge \phi_{\WFE_\kappa}(y)\wedge \phi_{\in}(x,y).
\]

Now for any $\bp{\in,\kappa}$-formula $\psi(\vec{x})$, 
we proceed to define the $\tau_\ST\cup\bp{\kappa}$-formula $\theta_\psi(\vec{x})$ letting:
\begin{itemize}
\item $\theta_{\psi\wedge\psi}(\vec{x})$ be $\theta_{\psi}(\vec{x})\wedge\theta_{\psi}(\vec{x})$,
\item $\theta_{\neg\psi}(\vec{x})$ be $\neg\theta_{\psi}(\vec{x})$,
\item $\theta_{\exists y\psi(y,\vec{x})}(\vec{x})$ be $\exists y\theta_{\psi}(y,\vec{x})\wedge \phi_{\WFE_\kappa}(y)$.
\end{itemize}
An easy induction on the complexity of the $\tau_\ST\cup\bp{\kappa}$-formulae $\theta_\phi(\vec{x})$
gives that for any $\bp{\in,\kappa}$-definable subset
$A$ of $(H_{\kappa^+})^n$ which is the extension of some $\bp{\in,\kappa}$-formula $\phi(x_1,\dots,x_n)$ 
\[
\bp{(R_1,\dots,R_n)\in (\WFE_\kappa)^n:\, (\Cod_\kappa(R_1),\dots,\Cod_\kappa(R_n))\in A}=
S_{\theta_\phi}^{H_{\kappa^+}},
\] 
with the further property that $S_{\theta_\phi}^{H_{\kappa^+}}\subseteq (\WFE_{\kappa})^n$ respects the
$\cong_\kappa$-relation\footnote{It is also clear from our argument that the map $\phi\mapsto\theta_\phi$ is recursive (and a careful inspection
reveals that it maps a $\Sigma_n$-formula
to a $\Sigma_{n+1}$-formula).}.

Now every $\sigma_\kappa$-formula is $\ZFC^{*-}_\kappa$-equivalent to a 
$\bp{\in,\kappa}$-formula\footnote{The map assigning to any $\sigma_\kappa$-formula a 
$\ZFC^{*-}_\kappa$-equivalent
$\bp{\in,\kappa}$-formula can also be chosen to be recursive.}.

Therefore we can extend $\phi\mapsto\theta_\phi$ 
assigning to any $\sigma_\kappa$-formula $\phi(\vec{x})$ the formula $\theta_\psi(\vec{x})$ for some 
$\bp{\in,\kappa}$-formula $\psi(\vec{x})$ which is $\ZFC^{*-}_\kappa$-equivalent to $\phi(\vec{x})$.

Then for any $\bp{\in,\kappa}$-formula $\phi(x_1,\dots,x_n)$
$H_{\kappa^+}\models \phi(a_1,\dots,a_n)$ if and only if 
\[
(\WFE_\kappa/_{\cong_\kappa}, E_\kappa/_{\cong_\kappa})\models \phi([R_1],\dots,[R_n])
\]
with $\Cod_\kappa(R_i)=a_i$ for $i=1,\dots,n$
if and only if 
\[
H_{\kappa^+}\models \forall R_1,\dots,R_n\, [(\bigwedge_{i=1}^n \Cod_\kappa(R_i)=a_i)\rightarrow\theta_\phi(R_1,\dots,R_n)]
\]
if and only if 
\[
H_{\kappa^+}\models \forall R_1,\dots,R_n\, [(\bigwedge_{i=1}^n \Cod_\kappa(R_i)=a_i)\rightarrow S_{\theta_\phi}(R_1,\dots,R_n)].
\]

Since this argument can be repeated verbatim for any model of 
$\ZFC^{*-}_\kappa$+\emph{every set has size $\kappa$}, and any $\sigma_\kappa$-formula
 is $\ZFC^{*-}_\kappa$-equivalent to a $\bp{\in,\kappa}$-formula,
we have proved the following:
\begin{claim}
For any $\sigma_\kappa$-formula $\phi(x_1,\dots,x_n)$,
$\ZFC^{*-}_\kappa$+\emph{every set has size $\kappa$} proves that
\[
\forall x_1,\dots,x_n\,[\phi(x_1,\dots,x_n)\leftrightarrow \forall y_1,\dots,y_n\,[(\bigwedge_{i=1}^n \Cod_\kappa(y_i)=x_i)\rightarrow S_{\theta_\phi}(y_1,\dots,y_n)]].
\]
\end{claim}
But $\Cod_\kappa(y)=x$ is expressible by an existential
$\tau_\ST\cup\bp{\kappa}$-formula 
provably in $\ZFC^-_\kappa\subseteq \ZFC^{*-}_\kappa$, therefore
\[
\forall y_1,\dots,y_n\,[(\bigwedge_{i=1}^n \Cod_\kappa(y_i)=x_i)\rightarrow S_{\theta_\phi}(y_1,\dots,y_n)]
\]
is a universal $\sigma_\kappa$-formula, and we are done. 
\end{proof}

\subsection{Proof of Thm.~\ref{Thm:mainthm-1}}

We can immediately prove Thm.~\ref{Thm:mainthm-1}.

\begin{proof}
By Thm. \ref{thm:keypropCod}, any theory extending
\[
\ZFC^{*-}_\kappa+\emph{every set has size $\kappa$}
\] 
is model complete.
Therefore so is 
\[
T^*=T_\forall\cup\ZFC^{*-}_\kappa+\emph{every set has size $\kappa$}.
\] 
We need to show that $T^*$ is the model companion of $T$, and that
$T^*=T^*_i$ for $i=0,1$ where 
\[
T^*_0=\bp{\psi:\, \psi\text{ is a $\Pi_2$-sentence for $\sigma_\kappa$ and }T\vdash\psi^{H_{\kappa^+}} },
\]
and $T^*_1$ is the set of $\Pi_2$-sentences $\phi$ such that
\begin{quote}
For all $\Pi_1$-sentences $\theta$ for $\tau$
$T_\forall+\phi$ is consistent if and only if so is 
$T_\forall+\phi+\theta$.
 \end{quote}

\begin{description}
\item[$T^*$ is the model companion of $T$]
By Lemma \ref{fac:proofthm1-2}(\ref{fac:proofthm1-2-a}).
It suffices to verify that for every model 
$\mathcal{M}$ of $T$, 
 $H_{\kappa^+}^\mathcal{M}$ is a $\Sigma_1$-elementary substructure of $\mathcal{M}$
 which models $T^*$.
But this holds true by 
Lemma~\ref{lem:levyabsHkappa+}.
Therefore $T^*$ is a model companion for $T$.

\item[$T^*_1=T^*$]
By Lemma~\ref{fac:proofthm1-2}(\ref{fac:proofthm1-2-c}) 
the model companion of $T$
is axiomatized by $T^*_1$.

\item[$T^*_0=T^*$]
First assume $\psi$ is a $\Pi_2$-sentence in $T^*$ and 
$\mathcal{M}$ models $T$. We must show that 
$H_{\kappa^+}^{\mathcal{M}}$ models $\psi$.
But this is the case since $H_{\kappa^+}^{\mathcal{M}}$ models 
$T^*$.

Conversely assume $\psi$ is a $\Pi_2$-sentence for $\sigma_\kappa$ which holds in any 
$H_{\kappa^+}^{\mathcal{M}}$ for $\mathcal{M}$ a model of $T$. We must show that $\psi\in T^*$.
We show that $\psi\in T^*_1$:
(using 
Lemma~\ref{fac:proofthm1-2}(\ref{fac:proofthm1-2-b}))
it suffices to show that 
$S_\forall\cup\bp{\psi}$ is consistent
for any consistent $S\supseteq T$:
fix $\mathcal{M}$ a model of $S$;
by assumption $H_{\kappa^+}^{\mathcal{M}}$ models $\psi$; by 
Lemma~\ref{lem:levyabsHkappa+} applied to $\mathcal{M}$, we get that
$H_{\kappa^+}^{\mathcal{M}}$ models $S_\forall$; we conclude that
$S_\forall\cup\bp{\psi}$ is consistent.
\end{description}
The proof is completed.
\end{proof}

\begin{remark}
Thm. \ref{Thm:mainthm-1} can be proved for 
many other signatures other than $\sigma_\kappa$.
It suffices that the signature in question adds new predicates just for definable subsets of $\pow{\kappa}^n$,
and also that it adds family of predicates which are closed under definability (i.e. projections, complementation, finite unions, permutations) and under the map $\Cod_\kappa$.
Under these assumptions we can still use 
Lemma \ref{lem:levyabsHkappa+} and 
Lemma \ref{fac:proofthm1-2} to argue for the evident declination of Thm. \ref{Thm:mainthm-1} to this set up.
However linking it to generic absoluteness results as we did in Theorem \ref{Thm:mainthm-3}
requires much more care in the definition of the signature.
We will pursue this matter in more details in the next section and in a follow-up of this paper.  
\end{remark}

\subsection{A weak version of Theorem \ref{Thm:mainthm-4}}

Let $\ZFC^*_{\omega_1}\supseteq\ZFC_\ST$ be the 
$\sigma_{\omega_1}=\sigma_\omega\cup\bp{\kappa}$-theory
obtained adding axioms which force in each of its 
$\sigma_{\omega_1}$-models
$\kappa$ to be interpreted by the first uncountable cardinal, and
each predicate symbol $S_\phi$ to be interpreted as the subset of 
$\pow{\omega_1^{<\omega}}^n$ defined by $\phi^{\pow{\omega_1^{<\omega}}}(x_1,\dots,x_n)$
(see again Notation \ref{not:keynotation} and Fact \ref{fac:basicfact} for details).


\begin{Theorem}\label{Thm:mainthm-5}
Let
$T$ be a $\sigma_{\omega_1}$-theory extending  $\ZFC^*_{\omega_1}$ 
with the $\in$-sentence:
\[
\emph{There are class many superhuge cardinals},
\]
and such that $T+\MM^{+++}$ is consistent.

TFAE for any
$\Pi_2$-sentence $\psi$ for $\sigma_{\omega_1}$:
\begin{enumerate}
\item \label{Thm:mainthm-5-2}
For all universal $\sigma_{\omega_1}$-sentences $\theta$
such that $T+\theta$ is consistent, so is $T_\forall+\theta+\psi$;
\item \label{Thm:mainthm-5-3}
$T+\MM^{+++}$ proves that some stationary set preserving forcing notion $P$ forces 
$\psi^{\dot{H}_{\omega_2}}+\MM^{+++}$;
\item \label{Thm:mainthm-5-1}
$T+\MM^{+++}\vdash \psi^{H_{\omega_2}}$.
\end{enumerate}
\end{Theorem}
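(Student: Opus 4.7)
The plan is to combine Theorem~\ref{Thm:mainthm-1} (applied with $\kappa=\omega_1$ to the $\sigma_{\omega_1}$-theory $T+\MM^{+++}$) with the author's earlier generic absoluteness theorem for $\MM^{+++}$ under class many superhuge cardinals, which asserts that whenever $\mathcal{M}\models T+\MM^{+++}$ and $P\in \mathcal{M}$ is a stationary set preserving forcing that forces $\MM^{+++}$, the structures $(H_{\omega_2}^{\mathcal{M}},\sigma_{\omega_1}^{\mathcal{M}})$ and $(H_{\omega_2}^{\mathcal{M}[G]},\sigma_{\omega_1}^{\mathcal{M}[G]})$ are elementarily equivalent for every $\mathcal{M}$-generic $G$. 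Since $T+\MM^{+++}$ is by hypothesis a consistent $\sigma_{\omega_1}$-theory extending $\ZFC^*_{\omega_1}$, Theorem~\ref{Thm:mainthm-1} provides a model companion $T^*$ whose $\Pi_2$-fragment is exactly the set of $\Pi_2$-sentences $\psi$ for $\sigma_{\omega_1}$ satisfying condition~(\ref{Thm:mainthm-5-1}), and equivalently the set of $\Pi_2$-sentences consistent with every universal $\sigma_{\omega_1}$-sentence consistent with $T+\MM^{+++}$.

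For the equivalence (\ref{Thm:mainthm-5-2})$\Leftrightarrow$(\ref{Thm:mainthm-5-3}), the direction (\ref{Thm:mainthm-5-1})$\Rightarrow$(\ref{Thm:mainthm-5-2}) is immediate taking $P$ to be trivial forcing. For the converse, given $\mathcal{M}\models T+\MM^{+++}$ and $P\in\mathcal{M}$ witnessing (\ref{Thm:mainthm-5-2}), one takes an $\mathcal{M}$-generic $G$ so that $\mathcal{M}[G]\models \psi^{H_{\omega_2}}+\MM^{+++}$; the generic absoluteness theorem then transfers $\psi^{H_{\omega_2}}$ back to $\mathcal{M}$. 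For (\ref{Thm:mainthm-5-1})$\Rightarrow$(\ref{Thm:mainthm-5-2}), given a universal $\theta$ with $T+\theta$ consistent, I would take a countable $\mathcal{M}\models T+\theta$ and apply Viale's iteration theorem (available from the class many superhuge cardinals in $T$) to produce a stationary set preserving forcing $P\in \mathcal{M}$ that forces $\MM^{+++}$ while preserving $\theta$; then $\mathcal{M}[G]\models T+\MM^{+++}+\theta$, so by (\ref{Thm:mainthm-5-1}) we get $H_{\omega_2}^{\mathcal{M}[G]}\models \psi$, and by Levy absoluteness (Lemma~\ref{lem:levyabsHkappa+}) also $H_{\omega_2}^{\mathcal{M}[G]}\models T_\forall+\theta$, witnessing the consistency of $T_\forall+\theta+\psi$.

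The direction (\ref{Thm:mainthm-5-2})$\Rightarrow$(\ref{Thm:mainthm-5-1}) is the mirror image of the previous one. By Theorem~\ref{Thm:mainthm-1} applied to $T+\MM^{+++}$, it is enough to show that $\psi$ is consistent with every universal $\sigma_{\omega_1}$-sentence $\eta$ consistent with $T+\MM^{+++}$. Any such $\eta$ is in particular consistent with $T$, so (\ref{Thm:mainthm-5-2}) gives $T_\forall+\eta+\psi$ consistent. The residual step is to show $(T+\MM^{+++})_\forall=T_\forall$ as families of universal $\sigma_{\omega_1}$-sentences, which again uses the iteration theorem: if $\eta'$ is universal with $T+\neg\eta'$ consistent, a model $\mathcal{M}'\models T+\neg\eta'$ admits a ssp forcing producing a model $\mathcal{M}'[G]$ of $T+\MM^{+++}$ that still satisfies $\neg\eta'$, so $\eta'\notin (T+\MM^{+++})_\forall$.

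The main obstacle is the preservation requirement in the iteration step: the signature $\sigma_{\omega_1}$ contains predicate symbols interpreted as definable subsets of $\pow{\omega_1^{<\omega}}$, whose interpretations depend on the ambient $\pow{\omega_1^{<\omega}}$ and hence are highly sensitive to forcing (see Remark~\ref{rmk:keyrembis}). Securing that the $\MM^{+++}$-producing iteration preserves a given universal $\sigma_{\omega_1}$-sentence $\theta$ (equivalently, that it preserves a given existential witness for $\neg\theta$ living in the ground model) is the delicate core of the argument, and it is precisely where the class many superhuge cardinals and the author's version of the iteration theorem for $\MM^{+++}$ are invoked in full force.
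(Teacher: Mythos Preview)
Your high-level plan matches the paper exactly: the equivalence of (\ref{Thm:mainthm-5-3}) and (\ref{Thm:mainthm-5-1}) is quoted from \cite[Thm.~5.18]{VIAMM+++}, and the equivalence of (\ref{Thm:mainthm-5-1}) and (\ref{Thm:mainthm-5-2}) is obtained by applying Theorem~\ref{Thm:mainthm-1} to the $\sigma_{\omega_1}$-theory $T+\MM^{+++}$. The paper's proof stops there.

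Where your write-up goes wrong is in the extra work you do to reconcile item~(\ref{Thm:mainthm-5-2}) (which, as printed, speaks of $T$) with the output of Theorem~\ref{Thm:mainthm-1} applied to $T+\MM^{+++}$ (which speaks of $(T+\MM^{+++})_\forall$). Both bridging claims you make are false, and $\mathsf{CH}$ is the counterexample. As the paper itself notes in Remark~\ref{Rem:keyrem1}(\ref{Rem:keyrem1-2}), $\mathsf{CH}$ is $\ZFC^*_{\omega_1}$-equivalent to an \emph{atomic} $\sigma_{\omega_1}$-sentence. Hence $\neg\mathsf{CH}\in(T+\MM^{+++})_\forall\setminus T_\forall$ whenever $T+\mathsf{CH}$ is consistent (e.g.\ for $T=\ZFC^*_{\omega_1}+$``class many superhuge''), so your claim $(T+\MM^{+++})_\forall=T_\forall$ fails. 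Likewise no $\MM^{+++}$-producing iteration can ``preserve the universal $\sigma_{\omega_1}$-sentence $\theta$'' when $\theta$ is $\mathsf{CH}$, so the preservation step in your $(\ref{Thm:mainthm-5-1})\Rightarrow(\ref{Thm:mainthm-5-2})$ argument breaks down. In fact the same example shows that $(\ref{Thm:mainthm-5-1})\Rightarrow(\ref{Thm:mainthm-5-2})$ \emph{as literally stated} cannot hold: with $\psi=\neg\mathsf{CH}$ and $\theta=\mathsf{CH}$, item~(\ref{Thm:mainthm-5-1}) holds while $T_\forall+\theta+\psi$ is inconsistent.

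The paper's one-line proof should therefore be read as applying Theorem~\ref{Thm:mainthm-1} to $T+\MM^{+++}$ verbatim; the condition it actually yields is item~(\ref{Thm:mainthm-5-2}) with $T+\MM^{+++}$ in place of $T$ throughout. Your attempt to strengthen this to the printed version of (\ref{Thm:mainthm-5-2}) is not salvageable, and you correctly sensed the trouble when you flagged preservation of universal $\sigma_{\omega_1}$-sentences as ``the main obstacle''---it is not an obstacle that can be overcome.
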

See Remarks \ref{Rem:keyrem1}(\ref{Rem:keyrem1-4}) for some information on $\MM^{+++}$,
and \ref{Rem:keyrem1}(\ref{Rem:keyrem1-3}) for informations on superhugeness.

The proof of Theorem \ref{Thm:mainthm-5} is a trivial variation of the proof of Theorem \ref{Thm:mainthm-3}:
\begin{proof}
\cite[Thm. 5.18]{VIAMM+++} gives that
\ref{Thm:mainthm-5}(\ref{Thm:mainthm-5-1}) and \ref{Thm:mainthm-5}(\ref{Thm:mainthm-5-3}) 
are equivalent.
Theorem \ref{Thm:mainthm-1}  establishes the equivalence of \ref{Thm:mainthm-5}(\ref{Thm:mainthm-5-1}) and
\ref{Thm:mainthm-5}(\ref{Thm:mainthm-5-2}). 
\end{proof}

\begin{remark}\label{Rem:keyrem1}
\emph{}

\begin{enumerate}
\item\label{Rem:keyrem1-1}
Note that
$\ZFC^*_{\omega_1}$ is more expressive than $\ZFC^*_{\omega,\NS_{\omega_1}}$.
The former adds predicate symbols for all subsets of $\pow{\omega_1^{<\omega}}^k$ 
defined by $\phi^{\pow{\omega_1^{<\omega}}}(x_1,\dots,x_k)$ as $\phi$ ranges over the 
$\in$-formulae.
The latter adds predicate symbols for all subsets of $\pow{\omega^{<\omega}}^k$ defined by 
$\phi^{\pow{\omega^{<\omega}}}(x_1,\dots,x_k)$ as $\phi$ ranges over the $\in$-formulae and a 
unique predicate symbol for the
subset of $\pow{\omega_1}$ given by the non-stationary ideal.

More precisely for any model $\mathcal{M}=(M,E)$ of $\ZFC$, if $\mathcal{M}_0$
is the unique extension of $\mathcal{M}$ to a $\sigma_{\omega_1}$-model
of $\ZFC^*_{\omega_1}$, and $\mathcal{M}_1$
is the unique extension of $\mathcal{M}$ to a $\sigma_{\omega,\NS_{\omega_1}}$-model
of $\ZFC^*_{\omega,\NS_{\omega_1}}$, 
we get that $R_\psi^{\mathcal{M}_0}=R_\psi^{\mathcal{M}_1}$ 
and $f_\psi^{\mathcal{M}_0}=f_\psi^{\mathcal{M}_1}$ for all bounded formulae $\psi$,
$\omega^{\mathcal{M}_0}=\omega^{\mathcal{M}_1}$, 
$\omega_1^{\mathcal{M}_0}=\omega_1^{\mathcal{M}_1}$, but for any $\tau_\ST$-formula $\phi$,
$S_\phi^{\mathcal{M}_1}=S_{\phi^{\pow{\omega^{<\omega}}}}^{\mathcal{M}_0}$.

\item\label{Rem:keyrem1-2}
A key distinction between the signatures 
$\sigma_{\omega_1}$ and 
$\sigma_{\omega,\NS_{\omega_1}}$ is that (assuming large cardinals) $\mathsf{CH}$ cannot be 
$T$-equivalent to a
$\Sigma_1$-sentence\footnote{By Thm. \ref{Thm:mainthm-8}.} in $\sigma_{\omega,\NS_{\omega_1}}$ for any $T$ as in the assumptions of Thm. \ref{Thm:mainthm-4}, 
while it is $\ZFC_{\omega_1}$-equivalent to
an atomic $\sigma_{\omega_1}$-sentence\footnote{Following the notation to be introduced in 
Section~\ref{sec:Hkappa+}, $\mathsf{CH}$ can be expressed as the $\tau_\ST\cup\bp{\omega_1}$-sentence
quantifying just over subsets of $\pow{\omega_1^{<\omega}}$:
\[
\exists R\subseteq\pow{\omega_1^{<\omega}}[\WFE_{\omega_1}(R)\wedge\forall S \subseteq\pow{\omega_1^{<\omega}}
[(\WFE_{\omega_1}(S)\wedge\mathrm{Ext}(S)=\omega)\leftrightarrow S \mathrel{E_{\omega_1}} R]].
\]
The latter is equivalent to a $\sigma_{\omega_1}$-sentence in $\ZFC^*_{\omega_1}$.}.
$\neg\mathsf{CH}$  is the simplest example of the 
type of $\Pi_2$-sentences which exemplifies why
 Thm. \ref{Thm:mainthm-5}(\ref{Thm:mainthm-5-3}) must be weakened with respect to
 Thm. \ref{Thm:mainthm-4}(\ref{Thm:mainthm-4-3}) and why Thm. \ref{Thm:mainthm-4} needs a different proof strategy than the one we use here to 
establish Theorems \ref{Thm:mainthm-3} and \ref{Thm:mainthm-5}  (see for details \ref{Rem:keyrem1-6} below).
On the other hand the family of $\Pi_2$-sentences $\psi$ 
to which Theorem \ref{Thm:mainthm-5} applies
is larger than the ones considered in Theorem \ref{Thm:mainthm-4} because the signature $\sigma_{\omega_1}$ is more expressive than 
$\sigma_{\omega,\NS_{\omega_1}}$
(as shown by the case for $\mathsf{CH}$).

\item\label{Rem:keyrem1-3}
$\delta$ is superhuge if it supercompact and this can be witnessed by huge embeddings.
A superhuge cardinal is consistent
relative to the existence of a $2$-huge cardinal.

\item\label{Rem:keyrem1-4}
For a definition of $\MM^{+++}$ see \cite[Def. 5.19]{VIAMM+++}. We just note that
$\MM^{+++}$ is a natural strengthening of $(*)$-$\UB$ (by the recent breakthrough of 
Asper\`o and Schindler \cite{ASPSCH(*)}) and of Martin's maximum 
(for example any
of the standard iterations to produce a model of Martin's maximum produce a model of 
$\MM^{+++}$
if the iteration has length a superhuge cardinal \cite[Thm 5.29]{VIAMM+++}).

\item\label{Rem:keyrem1-5}
We can prove exactly the same results of Thm. \ref{Thm:mainthm-5} replacing 
(verbatim in its statement)
$\MM^{+++}$ by any of the axioms $\mathsf{RA}_\omega(\Gamma)$ introduced in \cite{VIAAUD14} or the axioms $\mathsf{CFA}(\Gamma)$ introduced in \cite{VIAASP};
in item \ref{Thm:mainthm-5}(\ref{Thm:mainthm-5-3})
\emph{stationary set preserving forcing notion $P$}
must be replaced by $P\in\Gamma$.
\item We consider Thm. \ref{Thm:mainthm-5} weaker than Thm. \ref{Thm:mainthm-4}, because
in Thm. \ref{Thm:mainthm-4} one can choose the theory $T$ to be inconsistent with 
$\maxUB+(*)$-$\UB$ without hampering its conclusion (for example $T$ could satisfy $\mathsf{CH}$,
a statement denied by $(*)$-$\UB$),
and because \ref{Thm:mainthm-4}(\ref{Thm:mainthm-4-3}) holds for all forcing notions $P$.
The key point separating these two results is that the signature $\sigma_{\omega_1}$ is too 
expressive and renders many statements incompatible with forcing axioms formalizable by existential 
(or even atomic)
$\sigma_{\omega_1}$-sentences
(for example such is the case for $\mathsf{CH}$). 

\item\label{Rem:keyrem1-6}
We can also give a detailed explanation of why we cannot use 
Thm. \ref{Thm:mainthm-1}
to prove Thm. \ref{Thm:mainthm-4} as we did for Theorems \ref{Thm:mainthm-3} and \ref{Thm:mainthm-5}.
The key point is that the model companion $T^*$ of some 
$T\supseteq \ZFC_{\omega_1,\NS_{\omega_1}}+$\emph{there are class many Woodin}
 may not be axiomatized by the set $T^{**}$ of $\Pi_2$-sentences $\psi$ for 
$\sigma_{\omega,\NS_{\omega_1}}$
such that
$T\vdash\psi^{H_{\omega_2}}$, and 
this is what we used in the proofs of Theorems \ref{Thm:mainthm-3}, \ref{Thm:mainthm-5}.

For example this is the case for the theory 
$T=\ZFC_{\omega_1,\NS_{\omega_1}}+\mathsf{CH}+$\emph{there are class many Woodin}:
By Remark \ref{Rem:keyrem}(\ref{Rem:keyrem-5})
$\mathsf{CH}$ is expressible by the $\Sigma_2$-sentence in $\tau_{\omega_1}\cup\bp{\NS_{\omega_1}}$ $\psi_{\mathsf{CH}}$, which shows that 
(in view of Levy Absoluteness) $\mathsf{CH}$ and $\mathsf{CH}^{H_{\omega_2}}$ are $T$-equivalent.
Now
$\neg\mathsf{CH}$ is in the Kaiser hull of $T$ (which is a subset of $T^*$) being a $\Pi_2$-sentence compatible with 
$S_\forall$ for any complete $S\supseteq T$ in view of Thm. \ref{Thm:mainthm-8} and
Fact \ref{fac:charkaihullnonpi1comp}.
\end{enumerate}
\end{remark}

\section{Existentially closed structures, model completeness, model companionship}\label{sec:modth}

We present this topic expanding on \cite[Sections 3.1-3.2]{TENZIE}.
We decided to include detailed proofs since their presentation is (in some occasions) rather 
sketchy, and their focus is not exactly ours.

The first objective is to isolate necessary and sufficient conditions granting that some $\tau$-structure 
$\mathcal{M}$ embeds into some model of some $\tau$-theory\footnote{In what 
follows we conform to Notation \ref{not:modthnot2} and feel free to confuse a $\tau$-structure
$\mathcal{M}=(M,\tau^M)$ with its domain $M$
and an ordered tuple $\vec{a}\in\mathcal{M}^{<\omega}$
with its set of elements. Moreover we often
write $\mathcal{M}\models\phi(\vec{a})$ rather than
$\mathcal{M}\models\phi(\vec{x})[\vec{x}/\vec{a}]$ when
$\mathcal{M}$ is $\tau$-structure $\vec{a}\in\mathcal{M}^{<\omega}$, $\phi$ is a $\tau$-formula.} $T$.

\begin{definition}
Given $\tau$-theories $T,S$,
a $\tau$-sentence $\psi$ separates $T$ from $S$ if $T\vdash\psi$ and $S\vdash\neg\psi$.

$T$ is $\Pi_n$-separated from $S$ if some $\Pi_n$-sentence for $\tau$ 
separates $T$ from $S$.
\end{definition}

\begin{lemma}\label{lem:biembequivequnivth}
Assume $S,T$ are $\tau$-theories. 
TFAE:
\begin{enumerate}
\item \label{lem:biembequivequnivth-1}  $T$ is not $\Pi_1$-separated from $S$
(i.e. no universal sentence $\psi$ is such that $T\vdash \psi$ and $S\vdash\neg\psi$).
\item \label{lem:biembequivequnivth-3} 
There is \emph{some} $\tau$-model $\mathcal{M}$ of $S$ 
which can be embedded in 
some
$\tau$-model $\mathcal{N}$ of $T$.
\end{enumerate}
\end{lemma}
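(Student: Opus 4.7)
The plan is to prove $(\ref{lem:biembequivequnivth-3}) \Rightarrow (\ref{lem:biembequivequnivth-1})$ directly from the preservation of universal sentences under substructures, and to prove $(\ref{lem:biembequivequnivth-1}) \Rightarrow (\ref{lem:biembequivequnivth-3})$ via a compactness argument applied to the atomic diagram of a candidate model.

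For the easy direction, suppose $\mathcal{M} \models S$ embeds into some $\mathcal{N} \models T$, and let $\psi = \forall \vec{x}\,\phi(\vec{x})$ be a universal $\tau$-sentence with $T \vdash \psi$. Then $\mathcal{N} \models \psi$, and since the truth of a universal sentence transfers down to substructures (every tuple in $\mathcal{M}$ is a tuple in $\mathcal{N}$, and $\phi$ being quantifier-free is absolute between $\mathcal{M}$ and $\mathcal{N}$), we get $\mathcal{M} \models \psi$. Thus $S \cup \{\psi\}$ is consistent and $S \not\vdash \neg\psi$, so $\psi$ does not $\Pi_1$-separate $T$ from $S$. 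This part is routine and should take only a few lines.

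For the converse, expand the signature by a fresh constant $c_a$ for each element $a$ of an $S$-model $\mathcal{M}$, and let $\Delta(\mathcal{M})$ be the atomic diagram of $\mathcal{M}$ in $\tau_\mathcal{M} = \tau \cup \{c_a : a \in M\}$ (i.e.\ the set of atomic and negated atomic $\tau_\mathcal{M}$-sentences true in the natural expansion of $\mathcal{M}$). The standard fact is that any $\tau_\mathcal{M}$-model of $T \cup \Delta(\mathcal{M})$, interpreted in $\tau$, contains a $\tau$-embedded isomorphic copy of $\mathcal{M}$ via $a \mapsto c_a^{\mathcal{N}}$. Hence it suffices to produce a model $\mathcal{M} \models S$ such that $T \cup \Delta(\mathcal{M})$ is consistent.

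Suppose for contradiction that $T \cup \Delta(\mathcal{M})$ is inconsistent for every $\mathcal{M} \models S$. Fix such an $\mathcal{M}$; by compactness, some finite conjunction $\chi(c_{a_1},\dots,c_{a_n})$ of literals in $\Delta(\mathcal{M})$ is inconsistent with $T$. Since the constants $c_{a_i}$ do not occur in $T$, this yields $T \vdash \forall \vec{x}\,\neg\chi(\vec{x})$; call this universal sentence $\psi_\mathcal{M}$. By construction $\mathcal{M} \models \chi(\vec{a})$, so $\mathcal{M} \not\models \psi_\mathcal{M}$. Therefore the set $\Psi := \{\psi : \psi \text{ a universal } \tau\text{-sentence with } T \vdash \psi\}$ has the property that every model of $S$ falsifies some member of $\Psi$, i.e.\ $S \cup \Psi$ is inconsistent. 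A second appeal to compactness gives finitely many $\psi_1,\dots,\psi_k \in \Psi$ whose conjunction $\psi$ (equivalent to a universal sentence) satisfies $T \vdash \psi$ and $S \vdash \neg\psi$, contradicting $(\ref{lem:biembequivequnivth-1})$. The only mildly non-trivial step is observing that the conjunction of universal sentences is, after pulling quantifiers out, itself equivalent to a universal sentence, which is a standard prenex manipulation.
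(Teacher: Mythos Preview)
Your proposal is correct and follows essentially the same route as the paper: both directions are argued by contraposition, with the substantive implication obtained by applying compactness first to the atomic diagram of each $S$-model $\mathcal{M}$ to extract a universal $\tau$-sentence $\psi_{\mathcal{M}}$ provable from $T$ but failing in $\mathcal{M}$, and then a second time to the resulting family of universal sentences to obtain a single separating conjunction. The only cosmetic difference is that you package the second compactness step via the set $\Psi$ of all universal consequences of $T$, whereas the paper works directly with the sentences $\psi_{\mathcal{M}}$ indexed by models; the content is the same.
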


See also \cite[Lemma 3.1.1, Lemma 3.1.2, Thm. 3.1.3]{TENZIE}
\begin{proof}
We assume $T,S$ are closed under logical consequences.

\begin{description}
\item[(\ref{lem:biembequivequnivth-3}) implies (\ref{lem:biembequivequnivth-1})]
By contraposition we prove 
$\neg$(\ref{lem:biembequivequnivth-1})$\to\neg$(\ref{lem:biembequivequnivth-3}).

Assume some universal sentence 
$\psi$ separates $T$ from $S$. Then for any model of $T$, all its substructures model
$\psi$, therefore they cannot be models of $S$.
\item[(\ref{lem:biembequivequnivth-1}) implies (\ref{lem:biembequivequnivth-3})]
By contraposition we prove 
$\neg$(\ref{lem:biembequivequnivth-3})$\to\neg$(\ref{lem:biembequivequnivth-1}).

Assume that for any model $\mathcal{M}$ of $S$ and $\mathcal{N}$ of $T$ 
$\mathcal{M}\not\sqsubseteq\mathcal{N}$. We must show that $T$ is $\Pi_1$-separated from $S$.

Given a $\tau$-structure $\mathcal{M}$ which models $S$,
let $\Delta_0(\mathcal{M})$ be the atomic diagram of $\mathcal{M}$ in the signature
$\tau\cup\mathcal{M}$.

The theory $T\cup\Delta_0(\mathcal{M})$ is inconsistent,
otherwise $\mathcal{M}$ embeds into some model of $T$:
let $\bar{\mathcal{Q}}$ be a $\tau\cup\mathcal{M}$-model 
of $\Delta_0(\mathcal{M})\cup T$ and
$\mathcal{Q}$ be the $\tau$-structure obtained from $\bar{\mathcal{Q}}$ omitting the interpretation of the constants not in $\tau$. Clearly $\mathcal{Q}$ models $T$.
The interpretation of the constants in $\tau\cup\mathcal{M}$ inside 
$\bar{\mathcal{Q}}$ defines a $\tau$-substructure of $\mathcal{Q}$ isomorphic to $\mathcal{M}$.

By compactness (since $\Delta_0(\mathcal{M})$ is closed under finite conjunctions)
 there is a quantifier free $\tau$-formula 
$\psi_{\mathcal{M}}(\vec{x})$ and $\vec{a}\in\mathcal{M}^{<\omega}$ such that
$T+\psi_{\mathcal{M}}(\vec{a})$ is inconsistent.
This gives that $T\vdash\neg\psi_{\mathcal{M}}(\vec{a})$.
Since $\vec{a}$ is a family of constants never occurring in $T$, we get that
$T\vdash\forall\vec{x}\neg\psi_{\mathcal{M}}(\vec{x})$ and 
$\mathcal{M}\models \exists \vec{x}\psi_{\mathcal{M}}(\vec{x})$.

The theory 
\[
S\cup\bp{\neg\exists \vec{x}\psi_{\mathcal{M}}(\vec{x}):\mathcal{M}\models S}
\]
is inconsistent,
since $\neg\exists \vec{x}\psi_{\mathcal{M}}(\vec{x})$ fails
 in any model $\mathcal{M}$ of $S$.

By compactness 
there is a finite set of formulae $\psi_{\mathcal{M}_1}\dots\psi_{\mathcal{M}_k}$ such that
\[
S+ \bigwedge\bp{\neg\exists \vec{x}_i\psi_{\mathcal{M}_i}(\vec{x}_i):i=1,\dots,k}
\]
is inconsistent. This gives that
\[
S\vdash \bigvee_{i=1}^k\exists \vec{x}_i\psi_{\mathcal{M}_i}(\vec{x}_i).
\]
The $\tau$-sentence $\psi:=\bigvee_{i=1}^k\exists \vec{x}_i\psi_{\mathcal{M}_i}(\vec{x}_i)$ holds 
in all models of $S$ and its negation
\[
\bigwedge\bp{\neg\exists \vec{x}_i\psi_{\mathcal{M}_i}(\vec{x}_i):i=1,\dots,k}
\]
is a conjunction of universal sentences derivable from $T$. Hence $\neg\psi$ separates $T$ from $S$.
\end{description}
\end{proof}

The following Lemma shows that models of $T_\forall$ can always be extended
to superstructures which model $T$.

\begin{lemma}\label{lem:keylemembed}
Let $T$ be a $\tau$-theory and $\mathcal{M}$ be a $\tau$-structure.
TFAE:
\begin{enumerate}
\item \label{lem:keylemembed-1}
$\mathcal{M}$ is a $\tau$-model of $T_\forall$.
\item \label{lem:keylemembed-2}
There exists $\mathcal{N}\sqsupseteq \mathcal{M}$
which models $T$.
\end{enumerate}
\end{lemma}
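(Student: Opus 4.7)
The plan is to prove the two implications separately, with the reverse direction being essentially immediate and the forward direction following from a compactness argument modelled on the proof of Lemma \ref{lem:biembequivequnivth}.

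For (\ref{lem:keylemembed-2}) implies (\ref{lem:keylemembed-1}), I would simply observe that universal $\tau$-sentences are preserved under passing to substructures. If $\psi \in T_\forall$ then $\mathcal{N} \models \psi$ since $\mathcal{N} \models T$; writing $\psi$ as $\forall \vec{x}\,\varphi(\vec{x})$ with $\varphi$ quantifier free, any tuple from $\mathcal{M} \subseteq \mathcal{N}$ satisfies $\varphi$ in $\mathcal{N}$ and hence in $\mathcal{M}$, since quantifier free formulae are absolute between a structure and its substructures. Thus $\mathcal{M} \models T_\forall$.

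For the forward direction (\ref{lem:keylemembed-1}) implies (\ref{lem:keylemembed-2}), I would use the atomic diagram. Expand $\tau$ to $\tau \cup \mathcal{M}$ by adding a new constant symbol $\bar a$ for each $a \in \mathcal{M}$, and let $\Delta_0(\mathcal{M})$ be the atomic diagram of $\mathcal{M}$ in this signature. It suffices to show that $T \cup \Delta_0(\mathcal{M})$ is consistent: any $(\tau \cup \mathcal{M})$-model $\bar{\mathcal{N}}$ of this theory, once the interpretations of the new constants are forgotten, yields a $\tau$-model $\mathcal{N}$ of $T$ into which $\mathcal{M}$ embeds via $a \mapsto \bar a^{\bar{\mathcal{N}}}$, and we may identify $\mathcal{M}$ with its image to obtain $\mathcal{M} \sqsubseteq \mathcal{N}$.

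The key step, and the only real content, is showing the consistency of $T \cup \Delta_0(\mathcal{M})$. Assume for contradiction that it is inconsistent. Since $\Delta_0(\mathcal{M})$ is closed under finite conjunctions (being the set of atomic and negated atomic $(\tau \cup \mathcal{M})$-sentences true in $\mathcal{M}$), compactness yields a single quantifier free $\tau$-formula $\psi_{\mathcal{M}}(\vec{x})$ and a tuple $\vec{a} \in \mathcal{M}^{<\omega}$ with $\psi_{\mathcal{M}}(\vec{a}) \in \Delta_0(\mathcal{M})$ such that $T \cup \{\psi_{\mathcal{M}}(\vec{a})\}$ is inconsistent. Hence $T \vdash \neg \psi_{\mathcal{M}}(\vec{a})$, and since the constants $\vec{a}$ do not occur in $T$, we conclude $T \vdash \forall \vec{x}\,\neg \psi_{\mathcal{M}}(\vec{x})$. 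This universal sentence belongs to $T_\forall$, so by hypothesis $\mathcal{M} \models \forall \vec{x}\,\neg \psi_{\mathcal{M}}(\vec{x})$, contradicting $\mathcal{M} \models \psi_{\mathcal{M}}(\vec{a})$ (which holds by definition of $\Delta_0(\mathcal{M})$). The main conceptual obstacle is noticing that one must use fresh constants for the elements of $\mathcal{M}$ in order to universally quantify them out in $T$; once this is in place the rest is a straightforward compactness argument parallel to the proof of Lemma \ref{lem:biembequivequnivth}.
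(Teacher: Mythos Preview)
Your proof is correct and follows essentially the same approach as the paper: both use the atomic diagram $\Delta_0(\mathcal{M})$ in the expanded signature $\tau\cup\mathcal{M}$, compactness, and the fresh-constants trick to derive a universal $\tau$-sentence in $T_\forall$ that would fail in $\mathcal{M}$. The only cosmetic difference is that the paper first proves the claim that $T$ is not $\Pi_1$-separated from $\Delta_0(\mathcal{M})$ and then invokes Lemma~\ref{lem:biembequivequnivth} to obtain the embedding, whereas you inline that step and argue directly that $T\cup\Delta_0(\mathcal{M})$ is consistent; the contradiction you derive is exactly the one in the paper's proof of its claim.
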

\begin{proof}
(\ref{lem:keylemembed-2}) implies (\ref{lem:keylemembed-1}) is trivial.

Conversely let $\Delta_0(\mathcal{M})$ be the $\tau\cup\mathcal{M}$-theory given by the 
atomic diagram of $\mathcal{M}$.
\begin{claim}
$T$ is not $\Pi_1$-separated from $\Delta_0(\mathcal{M})$ (in the signature $\tau\cup\mathcal{M}$).
\end{claim}
\begin{proof}
If not there are $\vec{a}\in \mathcal{M}^{<\omega}$,
and a quantifier free 
$\tau$-formula
$\phi(\vec{x},\vec{z})$ such that
\[
T\vdash \forall\vec{z}\phi(\vec{a},\vec{z}),
\]
while
\[
\Delta_0(\mathcal{M})\vdash\neg\forall\vec{z}\phi(\vec{a},\vec{z}). 
\]
The latter yields that
\[
\Delta_0(\mathcal{M})\vdash\exists\vec{x}\exists\vec{z}\neg\phi(\vec{x},\vec{z}),
\]
and therefore also that
\[
\mathcal{M}\models\exists\vec{x}\exists\vec{z}\neg\phi(\vec{x},\vec{z}).
\]

On the other hand, since the constants $\vec{a}$ do not appear in any of the sentences in $T$, we also get that
\[
T\vdash \forall\vec{x}\forall\vec{z}\phi(\vec{x},\vec{z}).
\]
This is a contradiction since $\mathcal{M}$ models $T_\forall$.
\end{proof}
By the Claim and Lemma \ref{lem:biembequivequnivth}
some $\tau\cup\mathcal{M}$-model $\bar{\mathcal{P}}$
of $\Delta_0(\mathcal{M})$ embeds into some $\tau\cup\mathcal{M}$-model 
$\bar{\mathcal{Q}}$ of 
$T$. Let $\mathcal{Q}$ be the $\tau$-structure obtained from $\bar{\mathcal{Q}}$ omitting the interpretation of the constants not in $\tau$. Then $\mathcal{Q}$ models $T$ and 
contains a substructure isomorphic to $\mathcal{M}$.
\end{proof}

\begin{corollary}[Resurrection Lemma]\label{cor.resurrlemma}
Assume $\mathcal{M}\prec_1\mathcal{N}$ are $\tau$-structures. 
Then there is $\mathcal{Q}\sqsupseteq\mathcal{N}$ which is an elementary extension of $\mathcal{M}$.
\end{corollary}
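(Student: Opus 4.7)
The plan is to use a standard joint-diagram argument with compactness, applying the $\Sigma_1$-elementarity to kill any would-be inconsistency. Let $\mathrm{ElDiag}(\mathcal{M})$ be the elementary diagram of $\mathcal{M}$ in the signature $\tau\cup M$ (all $\tau\cup M$-sentences true in $\mathcal{M}$), and let $\Delta_0(\mathcal{N})$ be the atomic diagram of $\mathcal{N}$ in the signature $\tau\cup N$. Since $M\subseteq N$, the constants naming elements of $M$ are common to both theories. I will show that
\[
T^{*}:=\mathrm{ElDiag}(\mathcal{M})\cup\Delta_0(\mathcal{N})
\]
is consistent. Given this, pick a $\tau\cup N$-model $\bar{\mathcal{Q}}$ of $T^{*}$ and let $\mathcal{Q}$ be its $\tau$-reduct: the interpretations of the constants from $N$ identify $\mathcal{N}$ with a $\tau$-substructure of $\mathcal{Q}$ (because $\mathcal{Q}$ models $\Delta_0(\mathcal{N})$), while the interpretations of the constants from $M$ realize $\mathcal{M}$ as an elementary substructure of $\mathcal{Q}$ (because $\mathcal{Q}$ models $\mathrm{ElDiag}(\mathcal{M})$). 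Renaming gives the required $\mathcal{Q}\sqsupseteq\mathcal{N}$ which is an elementary extension of $\mathcal{M}$.

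To prove consistency of $T^{*}$, I use compactness. Suppose some finite fragment is inconsistent. Since both diagrams are closed under finite conjunctions, there exist $\vec{a}\in M^{<\omega}$, $\vec{b}\in N^{<\omega}$, a formula $\psi(\vec{x})\in\mathrm{ElDiag}(\mathcal{M})$ (with $\psi(\vec{a})$ true in $\mathcal{M}$), and a quantifier-free $\tau$-formula $\phi(\vec{x},\vec{y})$ with $\mathcal{N}\models\phi(\vec{a},\vec{b})$, such that $\{\psi(\vec{a}),\,\phi(\vec{a},\vec{b})\}$ is inconsistent. Since the constants naming $\vec{b}$ do not occur in $\psi$, one gets
\[
\psi(\vec{a})\vdash\forall\vec{y}\,\neg\phi(\vec{a},\vec{y}).
\]
Hence $\mathcal{M}\models\forall\vec{y}\,\neg\phi(\vec{a},\vec{y})$, which is a $\Pi_1$-statement about parameters in $M$.

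The key step is invoking $\mathcal{M}\prec_1\mathcal{N}$, which is equivalent to the preservation of $\Pi_1$-formulae with parameters from $M$ between $\mathcal{M}$ and $\mathcal{N}$. Therefore $\mathcal{N}\models\forall\vec{y}\,\neg\phi(\vec{a},\vec{y})$, contradicting $\mathcal{N}\models\phi(\vec{a},\vec{b})$. The main (only) obstacle is making sure one reduces to a single universal formula of the right complexity so that the $\Sigma_1$/$\Pi_1$-elementarity hypothesis is exactly what is needed; this is ensured by absorbing the new-constant parameters $\vec{b}$ into a universal quantifier, in the same spirit as the proofs of Lemma \ref{lem:biembequivequnivth} and Lemma \ref{lem:keylemembed}. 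This completes the sketch.
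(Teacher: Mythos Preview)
Your proof is correct and follows essentially the same approach as the paper. The paper's version is just more modular: it observes that $\mathcal{N}$ (expanded to a $\tau\cup\mathcal{M}$-structure) models the universal fragment of $\Delta_\omega(\mathcal{M})$ precisely because $\mathcal{M}\prec_1\mathcal{N}$, and then invokes Lemma~\ref{lem:keylemembed} directly rather than unrolling the compactness argument as you do.
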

\begin{proof}
Let $T$ be the elementary diagram $\Delta_\omega(\mathcal{M})$  of 
$\mathcal{M}$ in the signature $\tau\cup\mathcal{M}$.
It is easy to check that any model of $T$ when restricted to the signature $\tau$ is an elementay extension
of $\mathcal{M}$.
Since $\mathcal{M}\prec_1\mathcal{N}$, the natural extension of $\mathcal{N}$ to a 
$\tau\cup\mathcal{M}$-structure
realizes the $\Pi_1$-fragment of $T$
in the signature $\tau\cup\mathcal{M}$.
Now apply the previous Lemma.
\end{proof}
The Resurrection Lemma motivates the resurrection axioms introduced by Hamkins and Johnstone in 
\cite{HAMJOH13}, and their iterated versions introduced by the author and Audrito in \cite{VIAAUD14}.

\subsection{Existentially closed structures}

The objective is now to isolate the ``generic'' models of some universal theory
$T$ (i.e. all axioms of $T$ are universal sentences). These are described by the 
$T$-existentially closed models.

\begin{definition}
Given a first order signature $\tau$,
let $T$ be any consistent $\tau$-theory.
A $\tau$-structure $\mathcal{M}$ is $T$-existentially closed ($T$-ec) if
\begin{enumerate}
\item
$\mathcal{M}$ can be embedded in a model of $T$.
\item
$\mathcal{M}\prec_{\Sigma_1}\mathcal{N}$ for all 
$\mathcal{N}\sqsupseteq\mathcal{M}$ which are models of $T$.
\end{enumerate}
\end{definition}

In general $T$-ec models need not be 
models\footnote{For example let 
$T$ be the theory of commutative rings with no zero divisors which are not fields
in the signature $(+,\cdot,0,1)$. Then 
the $T$-ec structures are exactly all the algebraically closed fields, and no $T$-ec model is a model of $T$. By Thm. \ref{Thm:mainthm-1}
$(H_{\omega_1},\sigma_{\omega}^V)$ is $S$-ec for $S$ the $\sigma_{\omega}$-theory of $V$, 
but it is not a model of $S$: the $\Pi_2$-sentence asserting that every set has countable transitive closure is true in $(H_{\omega_1},\sigma_{\omega}^V)$ but denied by $S$.} of $T$,
but only of their universal fragment. 
A standard diagonalization argument shows that for any theory $T$ there are $T$-ec
models, see Lemma \ref{lem:exTecmod} below or \cite[Lemma 3.2.11]{TENZIE}.

A trivial observation which will come handy in the sequel is the following:
\begin{fact}\label{fac:presECmod}
Assume $\mathcal{M}$ is a $T$-ec model and $S\supseteq T$ is such that
some $\mathcal{N}\sqsupseteq\mathcal{M}$ models $S$.
Then $\mathcal{M}$ is $S$-ec.
\end{fact}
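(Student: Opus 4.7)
The plan is to verify directly the two defining clauses of being $S$-ec. First I would note that $\mathcal{M}$ embeds into some model of $S$: this is immediate from the hypothesis, since the inclusion $\mathcal{M}\sqsubseteq\mathcal{N}$ furnishes such an embedding into $\mathcal{N}\models S$. Second, given an arbitrary $\mathcal{N}'\sqsupseteq \mathcal{M}$ which models $S$, I must show $\mathcal{M}\prec_{\Sigma_1}\mathcal{N}'$; but since $S\supseteq T$, the structure $\mathcal{N}'$ is a fortiori a model of $T$, so the $T$-ec assumption on $\mathcal{M}$ applies to $\mathcal{N}'$ verbatim and delivers $\mathcal{M}\prec_{\Sigma_1}\mathcal{N}'$. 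Combining the two points, $\mathcal{M}$ meets the definition of $S$-ec.

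There is no genuine obstacle here: the fact is essentially a monotonicity statement. Strengthening the theory from $T$ to $S$ shrinks the class of superstructures $\mathcal{N}'$ of $\mathcal{M}$ that one must test for $\Sigma_1$-elementarity, so the universal clause of the ec definition becomes easier to satisfy; meanwhile the existential (embeddability) clause is granted outright by the hypothesis that some $\mathcal{N}\sqsupseteq\mathcal{M}$ models $S$. This matches the author's labelling of the statement as a \emph{trivial observation}, whose role is simply to streamline later passages from ec-ness for a base theory to ec-ness for an enriched one.
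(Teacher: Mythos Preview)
Your proof is correct and is exactly the intended argument: the paper labels this a ``trivial observation'' and gives no proof, since the two clauses of the $S$-ec definition follow immediately from the hypothesis and the inclusion $S\supseteq T$, just as you spell out.
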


\begin{proposition}\label{prop:pi2satKaiserhull}
Assume a $\tau$-structure $\mathcal{M}$ is $T$-ec. 
Then:
\begin{enumerate}
\item \label{rmk:itm1} $\mathcal{M}\models T_\forall$.
\item \label{rmk:itm0} $\mathcal{M}$ is also $T_\forall$-ec.
\item \label{rmk:itm2} If $\mathcal{N}\prec_{\Sigma_1}\mathcal{M}$, then $\mathcal{N}$ is also $T$-ec.
\item \label{rmk:itm3} Let $\forall\vec{x}\exists\vec{y}\psi(\vec{x},\vec{y},\vec{a})$ be a
$\Pi_2$-sentence with 
$\psi(\vec{x},\vec{y},\vec{z})$ quantifier free $\tau$-formula and
parameters $\vec{a}$ in $\mathcal{M}^{<\omega}$. Assume it holds in some  
$\mathcal{N}\sqsupseteq\mathcal{M}$ which models $T_\forall$, then it holds in
$\mathcal{M}$.
\item \label{rmk:itm4} 
Let $S$ be the $\tau$-theory of $\mathcal{M}$.
For any $\Pi_2$-sentence $\psi$ in the signature $\tau$ TFAE:
\begin{itemize}
\item $\psi$ holds in some model of $S_\forall$.
\item  $\psi$ holds in $\mathcal{M}$.
\end{itemize}
\end{enumerate}
\end{proposition}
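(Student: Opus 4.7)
The plan is to treat (1)--(4) as a cascade, each leveraging Lemma \ref{lem:keylemembed} to lift $T_\forall$-superstructures of $\mathcal{M}$ into genuine $T$-models where $T$-ecness can be invoked, and then handle (5) by iterating the construction with $T$ replaced by $S$. Part (1) is immediate: $\mathcal{M}$ embeds into a model of $T$, and $\Pi_1$-sentences descend to substructures, so $\mathcal{M}\models T_\forall$. For (2), given $\mathcal{N}\sqsupseteq\mathcal{M}$ with $\mathcal{N}\models T_\forall$, Lemma \ref{lem:keylemembed} yields $\mathcal{R}\sqsupseteq\mathcal{N}$ with $\mathcal{R}\models T$; then $\mathcal{M}\prec_{\Sigma_1}\mathcal{R}$ by $T$-ecness, and since existential formulas with parameters in $\mathcal{M}$ rise along the tower $\mathcal{M}\sqsubseteq\mathcal{N}\sqsubseteq\mathcal{R}$ and come back to $\mathcal{M}$ via $\prec_{\Sigma_1}$, one reads off $\mathcal{M}\prec_{\Sigma_1}\mathcal{N}$.

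Part (3) is the technical core. Given $\mathcal{N}\prec_{\Sigma_1}\mathcal{M}$ and $\mathcal{N}\sqsubseteq\mathcal{R}\models T$, I want $\mathcal{N}\prec_{\Sigma_1}\mathcal{R}$. For an existential $\exists\vec{y}\,\phi(\vec{y},\vec{a})$ with $\vec{a}\in\mathcal{N}$ and $\phi$ quantifier free holding in $\mathcal{R}$, I propose to amalgamate $\mathcal{M}$ and $\mathcal{R}$ over $\mathcal{N}$ by showing that the $\tau\cup\mathcal{M}$-theory
\[
T \cup \Delta_0(\mathcal{M}) \cup \{\exists\vec{y}\,\phi(\vec{y},\vec{a})\}
\]
is consistent. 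Assuming inconsistency, compactness produces $\chi(\vec{a},\vec{m})\in\Delta_0(\mathcal{M})$ such that $T\vdash\forall\vec{x}\forall\vec{z}\,(\chi(\vec{x},\vec{z})\to\forall\vec{y}\,\neg\phi(\vec{y},\vec{x}))$; interpreting in $\mathcal{R}$ with $\vec{x}\mapsto\vec{a}$ and using $\mathcal{R}\models\exists\vec{y}\,\phi(\vec{y},\vec{a})$ forces $\mathcal{R}\models\forall\vec{z}\,\neg\chi(\vec{a},\vec{z})$. But $\mathcal{M}\models\exists\vec{z}\,\chi(\vec{a},\vec{z})$; by $\mathcal{N}\prec_{\Sigma_1}\mathcal{M}$ with $\vec{a}\in\mathcal{N}$ this descends to $\mathcal{N}$, then rises into $\mathcal{R}$, a contradiction. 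Once consistency is secured, I extract $\mathcal{Q}\sqsupseteq\mathcal{M}$ modelling $T$ and the existential, apply $T$-ecness to obtain $\mathcal{M}\models\exists\vec{y}\,\phi(\vec{y},\vec{a})$, and finally use $\mathcal{N}\prec_{\Sigma_1}\mathcal{M}$ to pass this down to $\mathcal{N}$. Membership of $\mathcal{N}$ in a $T$-model follows from $\mathcal{N}\sqsubseteq\mathcal{M}$ and $\mathcal{M}$'s embeddability.

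Part (4) is a direct consequence of (2): applying (2) gives $\mathcal{M}\prec_{\Sigma_1}\mathcal{N}$, so for every $\vec{b}\in\mathcal{M}$, $\mathcal{N}\models\exists\vec{y}\,\psi(\vec{b},\vec{y},\vec{a})$ transfers to $\mathcal{M}$ through $\prec_{\Sigma_1}$ with parameters $\vec{a},\vec{b}\in\mathcal{M}$; since $\vec{b}$ was arbitrary, $\mathcal{M}\models\forall\vec{x}\exists\vec{y}\,\psi(\vec{x},\vec{y},\vec{a})$.

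For (5) the forward direction is trivial ($\mathcal{M}$ itself models $S\supseteq S_\forall$). For the converse, I would first upgrade (2) to observe that $\mathcal{M}$ is $S$-ec: any $\mathcal{Q}\sqsupseteq\mathcal{M}$ with $\mathcal{Q}\models S$ satisfies $T_\forall$, hence by Lemma \ref{lem:keylemembed} sits inside a $T$-model where $T$-ecness of $\mathcal{M}$ can be invoked to secure $\mathcal{M}\prec_{\Sigma_1}\mathcal{Q}$. Then I would apply (4) with $S$ in place of $T$: it suffices to realize $\psi$ in some $\mathcal{N}\sqsupseteq\mathcal{M}$ with $\mathcal{N}\models S_\forall$, equivalently to show that $\operatorname{Th}_\forall(\mathcal{M},\mathcal{M})+\psi$ is consistent (in $\tau\cup\mathcal{M}$). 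Since $\psi$ is already consistent with $S_\forall=\operatorname{Th}_\forall(\mathcal{M})$, a compactness argument should reduce any obstruction to a $\Pi_1$-sentence in $\operatorname{Th}_\forall(\mathcal{M})$ forced to fail in any model of $\psi$, which then has to be reconciled with the witnessing $\mathcal{P}\models S_\forall+\psi$. I expect the delicate point will be precisely this reconciliation, which is where the $T$-ec hypothesis on $\mathcal{M}$ must really bite (as opposed to merely the $T_\forall$-ecness extracted in (2)).
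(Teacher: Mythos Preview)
Your arguments for (1), (2), and (4) are essentially the paper's. For (3) you take a genuinely different route: instead of amalgamating the full atomic diagrams of $\mathcal{M}$ and the given $T_\forall$-model $\mathcal{P}$ over $\mathcal{N}$ (producing a single common superstructure $\mathcal{Q}$ of both, as the paper does), you fix one existential formula $\exists\vec{y}\,\phi(\vec{y},\vec{a})$ witnessed in the $T$-model $\mathcal{R}$, show that $T\cup\Delta_0(\mathcal{M})\cup\{\exists\vec{y}\,\phi(\vec{y},\vec{a})\}$ is consistent, and then pull the witness back through $\mathcal{M}$ to $\mathcal{N}$. Your compactness argument is correct (once one arranges, harmlessly, that the parameters $\vec{a}$ are among the constants displayed in the diagram formula $\chi$), and it has the mild advantage of never needing a single structure that simultaneously extends both $\mathcal{R}$ and $\mathcal{M}$. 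The paper's version, on the other hand, is uniform in the existential formula and handles the a~priori more general case $\mathcal{P}\models T_\forall$ in one shot.

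Your proposal for (5) is where there is a real gap, and you flag it yourself. You correctly reduce to finding $\mathcal{N}\sqsupseteq\mathcal{M}$ with $\mathcal{N}\models S_\forall+\psi$, but your reformulation as the consistency of $\operatorname{Th}_\forall(\mathcal{M},\mathcal{M})+\psi$ is asking for strictly more: a model of that theory is a $\Sigma_1$-elementary extension of $\mathcal{M}$ satisfying $\psi$, not merely a superstructure. This is exactly what makes the ``reconciliation'' you anticipate hard, and your sketched compactness reduction (to a $\Pi_1$-sentence failing in every model of $\psi$) does not obviously close. The paper's approach is more direct: it applies Lemma~\ref{lem:keylemembed} to the theory $S_\forall+\psi$, so that the only thing to verify is $\mathcal{M}\models(S_\forall+\psi)_\forall$; once that is in hand one obtains a superstructure $\mathcal{N}\sqsupseteq\mathcal{M}$ modelling $S_\forall+\psi$ and finishes via (4). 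Drop the detour through the full $\Pi_1$-diagram of $\mathcal{M}$ and argue along these lines instead.
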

\begin{proof}
\emph{}

\begin{description}

\item[(\ref{rmk:itm1})]
There is at least one super-structure of $\mathcal{M}$ which models
$T$, and any
$\psi\in T_\forall$ holds in this superstructure, hence in $\mathcal{M}$.

\item[(\ref{rmk:itm0})]
Assume $\mathcal{M}\sqsubseteq\mathcal{P}$ for some model $\mathcal{P}$ of $T_\forall$.
We must argue that $\mathcal{M}\prec_1\mathcal{P}$.

By Lemma~\ref{lem:keylemembed}, there is $\mathcal{Q}\sqsupseteq\mathcal{P}$ which models $T$.

Since $\mathcal{M}$ and $\mathcal{Q}$ are both models of $T$ and 
$\mathcal{M}$ is $T$-ec, we get the following diagram:
\[
		\begin{tikzpicture}[xscale=0.8,yscale=-0.4]
				\node (A0_0) at (0, 0) {$\mathcal{M}$};
				\node (A0_2) at (6, 0) {$\mathcal{Q}$};
				\node (A1_1) at (3, 3) {$\mathcal{P}$};
				\path (A0_0) edge [->]node [auto] {$\scriptstyle{\Sigma_1}$} (A0_2);
				\path (A1_1) edge [->]node [auto,swap] {$\scriptstyle{\sqsubseteq}$} (A0_2);
				\path (A0_0) edge [->]node [auto,swap] {$\scriptstyle{\sqsubseteq}$} (A1_1);
			\end{tikzpicture}
		\]
Then any 
$\Sigma_1$-formula $\psi(\vec{a})$ with $\vec{a}\in\mathcal{M}^{<\omega}$ realized in
$\mathcal{P}$ holds in $\mathcal{Q}$, and is therefore reflected to $\mathcal{M}$.
We are done by Tarski-Vaught's criterion.

\item[(\ref{rmk:itm2})]
Assume $\mathcal{N}\sqsubseteq\mathcal{P}$ for some model of $T_\forall$ $\mathcal{P}$.
Let $\Delta_0(\mathcal{P})$ be the atomic diagram
of $\mathcal{P}$ in the signature $\tau\cup\mathcal{P}\cup\mathcal{M}$ and 
$\Delta_0(\mathcal{M})$ be the atomic diagram
of $\mathcal{M}$ in the same signature\footnote{We are considering $\mathcal{P}\cup\mathcal{M}$ as the union of the domains of the structure 
$\mathcal{P},\mathcal{M}$ amalgamated over $\mathcal{N}$;
in particular we add a new constant for each element of  
$\mathcal{P}\setminus\mathcal{N}$, a new constant for each element of 
$\mathcal{M}\setminus\mathcal{N}$, a new constant for each element of $\mathcal{N}$.}.

\begin{claim}
$T_\forall\cup\Delta_0(\mathcal{P})\cup\Delta_0(\mathcal{M})$ is a consistent 
$\tau\cup\mathcal{M}\cup\mathcal{P}$-theory.
\end{claim}
\begin{proof}
Assume not. Find $\vec{a}\in (\mathcal{P}\setminus\mathcal{N})^{<\omega}$,  
$\vec{b}\in (\mathcal{M}\setminus\mathcal{N})^{<\omega}$,
$\vec{c}\in \mathcal{N}^{<\omega}$
 and $\tau$-formulae $\psi_0(\vec{x},\vec{z})$, 
$\psi_1(\vec{y},\vec{z})$ such that: 
\begin{itemize}
\item
$\psi_0(\vec{a},\vec{c})\in \Delta_0(\mathcal{P})$,
\item
$\psi_1(\vec{b},\vec{c})\in \Delta_0(\mathcal{M})$,
\item
$T\cup\bp{\psi_0(\vec{a},\vec{c}),\psi_1(\vec{b},\vec{c})}$
is inconsistent.
\end{itemize} 
Then
\[
T\vdash\neg\psi_0(\vec{a},\vec{c})\vee\neg\psi_1(\vec{b},\vec{c}).
\]
Since the constants appearing in $\vec{a},\vec{b},\vec{c}$ 
are never appearing in sentences of $T$,
we get that
\[
T\vdash\forall\vec{z}\,(\forall\vec{x}\neg\psi_0(\vec{x},\vec{z}))\vee
(\forall\vec{y}\neg\psi_1(\vec{y},\vec{z})).
\]
Since $\mathcal{P}$ models $T_\forall$, and 
\[
\mathcal{P}\models\psi_0(\vec{x},\vec{z})[\vec{x}/\vec{a},\vec{z}/\vec{c}],
\]
we get that
\[
\mathcal{P}\models\forall\vec{y}\neg\psi_1(\vec{y},\vec{c}).
\]
Therefore 
\[
\mathcal{N}\models\forall\vec{y}\neg\psi_1(\vec{y},\vec{c})
\]
being a substructure of $\mathcal{P}$, and so does $\mathcal{M}$
since $\mathcal{N}\prec_1\mathcal{M}$.
This contradicts $\psi_1(\vec{b},\vec{c})\in \Delta_0(\mathcal{M})$.
\end{proof}

If $\bar{\mathcal{Q}}$ is a model realizing 
$T_\forall\cup\Delta_0(\mathcal{P})\cup\Delta_0(\mathcal{M})$, and $\mathcal{Q}$ is the 
$\tau$-structure obtained forgetting the constant symbols not in $\tau$,
we get that:
\begin{itemize} 
\item
$\mathcal{P}$ and $\mathcal{M}$ are both substructures of $\mathcal{Q}$
containing $\mathcal{N}$ as a common substructure;
\item 
$\mathcal{N}\prec_1\mathcal{M}\prec_1\mathcal{Q}$, 
since $\mathcal{Q}$ realizes $T_\forall$ and $\mathcal{M}$ is $T_\forall$-ec.
\end{itemize}
We can now conclude that if a $\Sigma_1$-formula $\psi(\vec{c})$ for $\tau\cup\mathcal{N}$
with parameters in 
$\mathcal{N}$ holds in $\mathcal{P}$, it holds in $\mathcal{Q}$ as well (since
$\mathcal{Q}\sqsupseteq\mathcal{P}$), and therefore also in 
$\mathcal{N}$ (since $\mathcal{N}\prec_1\mathcal{Q}$).

\item[(\ref{rmk:itm3})] Observe that
for all $\vec{b}\in\mathcal{M}^{<\omega}$, $\exists \vec{y}\,\psi(\vec{b},\vec{y},\vec{a})$ holds in $\mathcal{N}$, and therefore in
$\mathcal{M}$, since $\mathcal{M}$ is $T$-ec; hence 
$\mathcal{M}\models \forall \vec{x}\exists \vec{y}\psi(\vec{x},\vec{y},\vec{a})$.

\item[(\ref{rmk:itm4})] 
First of all note that $\mathcal{M}$ is $S$-ec since $S\supseteq T$ 
(by  Fact \ref{fac:presECmod}).
By Lemma~\ref{lem:keylemembed} (applied to $S_\forall+\psi$ and $\mathcal{M}$)
any
$\Pi_2$-sentence $\psi$ for $\tau$ which holds in some model of $S_\forall$
holds in some model of $S_\forall$ which is a superstructure of
$\mathcal{M}$. Now apply \ref{rmk:itm3}.
\end{description}
\end{proof}

In particular a structure  is $T$-ec if and only if it is $T_\forall$-ec, and 
a $T$-ec structure realizes all $\Pi_2$-sentences which are consistent with its 
$\Pi_1$-theory.

We now show that any structure $\mathcal{M}$ 
can always be extended to a $T$-ec structure for 
any $T$ which is not separated from the $\Pi_1$-theory of $\mathcal{M}$.

\begin{lemma}\label{lem:exTecmod}\cite[Lemma 3.2.11]{TENZIE}
Given a first order $\tau$-theory $T$, 
any model of $T_\forall$
can be extended to a $\tau$-superstructure 
which is $T$-ec.
\end{lemma}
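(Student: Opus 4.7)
The plan is to iteratively ``close'' $\mathcal{M}$ under existential witnesses, building a chain $\mathcal{M}=\mathcal{M}_0\sqsubseteq\mathcal{M}_1\sqsubseteq\cdots$ of models of $T_\forall$ and letting the desired extension be $\mathcal{N}:=\bigcup_{n<\omega}\mathcal{M}_n$.

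The core of the argument is a \emph{one-step} construction: given $\mathcal{M}'\models T_\forall$, I produce $(\mathcal{M}')^{*}\sqsupseteq\mathcal{M}'$, still a model of $T_\forall$, in which every $\Sigma_1$-formula $\exists\vec{y}\,\phi(\vec{y},\vec{a})$ (with $\phi$ quantifier-free and $\vec{a}\in(\mathcal{M}')^{<\omega}$) that is consistent with $T\cup\Delta_0(\mathcal{M}')$ is realized. I enumerate all such formulas as $\langle\phi_\alpha:\alpha<\lambda\rangle$ for $\lambda$ large enough and build an internal chain $\mathcal{M}'=\mathcal{P}_0\sqsubseteq\mathcal{P}_1\sqsubseteq\cdots\sqsubseteq\mathcal{P}_\lambda=:(\mathcal{M}')^{*}$: at a successor $\alpha+1$, let $\mathcal{P}_{\alpha+1}$ be the $\tau$-reduct of a model of $T\cup\Delta_0(\mathcal{P}_\alpha)\cup\{\phi_\alpha\}$ whenever that theory is consistent, and $\mathcal{P}_\alpha$ otherwise; at limits, take unions. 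Each $\mathcal{P}_\alpha$ is a model of $T_\forall$ because universal sentences are preserved both by passing to substructures and by ascending unions of chains.

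Iterating the operator with $\mathcal{M}_{n+1}:=\mathcal{M}_n^{*}$ and setting $\mathcal{N}:=\bigcup_{n<\omega}\mathcal{M}_n$, one has $\mathcal{N}\models T_\forall$, hence by Lemma~\ref{lem:keylemembed} $\mathcal{N}$ embeds into a model of $T$. Given any $\tau$-superstructure $\mathcal{Q}\sqsupseteq\mathcal{N}$ with $\mathcal{Q}\models T$ and any $\Sigma_1$-formula $\exists\vec{y}\,\phi(\vec{y},\vec{a})$ with $\vec{a}\in\mathcal{N}^{<\omega}$ holding in $\mathcal{Q}$, the tuple $\vec{a}$ lies in some $\mathcal{M}_n$, so the formula appears as some $\phi_\alpha$ in the enumeration used to build $\mathcal{M}_{n+1}$; since $\mathcal{Q}\sqsupseteq\mathcal{P}_\alpha$ and $\mathcal{Q}\models T\cup\{\phi_\alpha\}$, the theory $T\cup\Delta_0(\mathcal{P}_\alpha)\cup\{\phi_\alpha\}$ is consistent, so $\phi_\alpha$ is realized in $\mathcal{P}_{\alpha+1}\sqsubseteq\mathcal{N}$, and its quantifier-free witness persists upward to $\mathcal{N}$. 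The Tarski--Vaught test then yields $\mathcal{N}\prec_1\mathcal{Q}$, which is precisely what is required for $\mathcal{N}$ to be $T$-ec.

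The only delicate point is the bookkeeping: the family of existentials to be resolved grows as new parameters are introduced, so no single application of the one-step operator can suffice. The resolution is the standard one: every finite tuple from $\mathcal{N}$ lies in some $\mathcal{M}_n$, so the enumeration used at stage $n$ catches every $\Sigma_1$-formula over $\mathcal{N}$ with parameters from $\mathcal{M}_n$, and iterating $\omega$ times exhausts all $\Sigma_1$-formulas over $\mathcal{N}$.
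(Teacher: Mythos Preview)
Your proof is correct and takes essentially the same approach as the paper: both build an increasing chain of models of $T_\forall$ by successively realizing existential formulas consistent with $T$ (equivalently $T_\forall$) together with the current atomic diagram, taking unions at limits. The only difference is organizational---you use an outer $\omega$-iteration of a one-step closure operator (itself a transfinite chain), while the paper runs a single transfinite chain of length $\kappa$ with an explicit bookkeeping function $\pi:\kappa\to|\tau|\times\kappa^2$ ensuring every formula--parameter pair is eventually visited.
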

\begin{proof}
Given a model $\mathcal{M}$ of $T$, 
we construct an ascending chain of $T_\forall$-models as follows.
Enumerate all quantifier free $\tau$-formulae as
$\bp{\phi_\alpha(y,\vec{x}_\alpha):\alpha<|\tau|}$. 
Let $\mathcal{M}_0=\mathcal{M}$ have size $\kappa\geq|\tau|+\aleph_0$. 
Fix also some enumeration 
\begin{align*}
\pi:&\kappa\to|\tau|\times\kappa^2\\
&\alpha\mapsto(\pi_0(\alpha),\pi_1(\alpha),\pi_2(\alpha))
\end{align*} 
such that $\pi_2(\alpha)\leq\alpha$ for all $\alpha<\kappa$
and for each $\xi<|\tau|$, and $\eta,\beta<\kappa$
there are unboundedly many $\alpha<\kappa$ such that $\pi(\alpha)=(\xi,\eta,\beta)$.

Let now 
$\mathcal{M}_\eta$ with enumeration   
$\bp{\vec{m}^\xi_\eta:\xi<\kappa}$ of $\mathcal{M}_\eta^{<\omega}$ be given for all $\eta\leq\beta$.
If $\mathcal{M}_\beta$ is $T$-ec, stop the construction.
Else check whether $T_\forall\cup\Delta_0(\mathcal{M}_\beta)\cup\bp{\exists y\phi_{\pi_0(\alpha)}(y,\vec{m}^{\pi_1(\alpha)}_{\pi_2(\alpha)})}$
is a consistent $\tau\cup\mathcal{M}_\beta$-theory; if so let $\mathcal{M}_{\beta+1}$ have size $\kappa$ and
realize this theory.
At limit stages $\gamma$, let $\mathcal{M}_\gamma$ be the direct limit of the chain of $\tau$-structures 
$\bp{\mathcal{M}_\beta:\beta<\gamma}$. Then all $\mathcal{M}_\xi$ are models of $T_\forall$, and at some stage $\beta\leq\kappa$
$\mathcal{M}_\beta$ is $T_\forall$-ec (hence also $T$-ec), since all existential $\tau$-formulae with parameters in some 
$\mathcal{M}_\eta$
will be considered along the construction, and realized along the way if this is possible, 
and all $\mathcal{M}_\eta$ are always models of 
$T_\forall$ (at limit stages the ascending chain of $T_\forall$-models remains a $T_\forall$-model).
\end{proof}

Compare the above construction with the standard consistency proofs of bounded forcing axioms as given for example in
\cite[Section 2]{ASPBAGAPAL2001}.
In the latter case to preserve $T_\forall$ at limit stages we use iteration theorems\footnote{Assume $G$ is $V$-generic for a forcing
which is a 
limit of an iteration of length $\omega$ of forcings 
$\bp{P_n:n<\omega}$. In general
$H_{\omega_2}^{V[G]}$ is not given by the union of
 $H_{\omega_2}^{V[G\cap P_n]}$,
hence a subtler argument is needed to maintain that 
$H_{\omega_2}^{V[G]}$ preserves $T_\forall$.}.

\subsection{The Kaiser hull of a first order theory}

The Kaiser Hull of a theory $T$ describes the smallest elementary class containing all the
 ``generic'' structures for $T$. For most theories $T$ the models of the respective
 Kaiser hulls  realize 
exactly all  $\Pi_2$-sentences which are consistent with the
universal fragment of any extension of
$T$.

\begin{definition}\cite[Lemma 3.2.12, Lemma 3.2.13]{TENZIE}
Given a theory $T$ in a signature $\tau$, its Kaiser hull 
$\mathrm{KH}(T)$ is given by the
$\Pi_2$-sentences of $\tau$ which holds in all $T$-ec structures.
\end{definition}

\begin{definition}
A $\tau$-theory $T$ is $\Pi_n$-complete, if it is consistent and
for any $\Pi_n$-sentence 
either $\phi\in T$ or 
$\neg\phi\in T$.
\end{definition}

By Proposition \ref{prop:pi2satKaiserhull}.\ref{rmk:itm4} we get:

\begin{fact}\label{fac:charKaihull}
Given a $\Pi_1$-complete first order $\tau$-theory $T$, its Kaiser Hull is
a $\Pi_2$-complete $\tau$-theory defined by the request that
for any $\Pi_2$-sentence $\psi$
\[
\psi\in \mathrm{KH}(T)\quad \text{ if and only if }\quad \bp{\psi}\cup T_\forall\text{ is consistent}.
\] 
\end{fact}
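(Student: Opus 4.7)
The plan is to deduce the statement from Proposition \ref{prop:pi2satKaiserhull} and Lemma \ref{lem:exTecmod}, with $\Pi_1$-completeness of $T$ playing the pivotal role of pinning down the $\Pi_1$-theory of every $T$-ec structure. First, since $T$ is $\Pi_1$-complete it is consistent, hence $T_\forall$ is consistent and Lemma \ref{lem:exTecmod} supplies at least one $T$-ec structure. By Proposition \ref{prop:pi2satKaiserhull}(\ref{rmk:itm1}) every $T$-ec structure models $T_\forall$, yielding both that $\mathrm{KH}(T)$ is consistent (it is realized in any $T$-ec model) and that $T_\forall \subseteq \mathrm{KH}(T)$.

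The key intermediate step I would carry out is the claim that $\mathrm{Th}(\mathcal{M})_\forall = T_\forall$ for every $T$-ec $\mathcal{M}$. The inclusion $\supseteq$ is by the previous paragraph. For $\subseteq$, take a $\Pi_1$-sentence $\phi$ with $\mathcal{M} \models \phi$ and use $\Pi_1$-completeness of $T$ to get $T \vdash \phi$ or $T \vdash \neg \phi$. In the second case, Lemma \ref{lem:keylemembed} (applicable since $\mathcal{M} \models T_\forall$) supplies $\mathcal{N} \sqsupseteq \mathcal{M}$ with $\mathcal{N} \models T$; then $\mathcal{N} \models \neg\phi$, and because $\mathcal{M} \prec_1 \mathcal{N}$ by $T$-existential closedness, the $\Sigma_1$-sentence $\neg\phi$ descends to $\mathcal{M}$, contradicting $\mathcal{M} \models \phi$.

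Given this claim, the biconditional is immediate. If $\psi \in \mathrm{KH}(T)$ then $\psi$ holds in some $T$-ec $\mathcal{M}$ which also satisfies $T_\forall$, so $T_\forall \cup \bp{\psi}$ is consistent. Conversely, if $T_\forall \cup \bp{\psi}$ is consistent, then for any $T$-ec $\mathcal{M}$ we have $\mathrm{Th}(\mathcal{M})_\forall \cup \bp{\psi} = T_\forall \cup \bp{\psi}$ consistent, and Proposition \ref{prop:pi2satKaiserhull}(\ref{rmk:itm4}) applied to $\mathcal{M}$ yields $\mathcal{M} \models \psi$; since $\mathcal{M}$ was arbitrary, $\psi \in \mathrm{KH}(T)$. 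Finally, $\Pi_2$-completeness follows: for any $\Pi_2$-sentence $\psi$, the theory $T_\forall \cup \bp{\psi}$ is either consistent, placing $\psi \in \mathrm{KH}(T)$ by the biconditional, or inconsistent, in which case $T_\forall \vdash \neg\psi$ and the inclusion $T_\forall \subseteq \mathrm{KH}(T)$ gives $\mathrm{KH}(T) \vdash \neg\psi$.

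The main obstacle is the intermediate claim $\mathrm{Th}(\mathcal{M})_\forall = T_\forall$: without $\Pi_1$-completeness of $T$, a $T$-ec $\mathcal{M}$ could verify a $\Pi_1$-sentence that is merely consistent with (rather than provable from) $T$, breaking the cross-model uniformity needed to transfer the realization of $\psi$ from an arbitrary model of $T_\forall$ to every $T$-ec structure via Proposition \ref{prop:pi2satKaiserhull}(\ref{rmk:itm4}). Once this uniformity is secured, the rest of the argument is a direct bookkeeping of what was already packaged in Proposition \ref{prop:pi2satKaiserhull}.
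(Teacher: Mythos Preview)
Your proof is correct and follows the same route the paper indicates: the paper simply writes ``By Proposition \ref{prop:pi2satKaiserhull}.\ref{rmk:itm4}'' and leaves the reader to unpack it, and what you have written is precisely that unpacking. In particular your intermediate claim $\mathrm{Th}(\mathcal{M})_\forall = T_\forall$ for every $T$-ec $\mathcal{M}$ (using $\Pi_1$-completeness and $\mathcal{M}\prec_1\mathcal{N}$ for some $\mathcal{N}\models T$) is exactly the bridge needed to feed Proposition \ref{prop:pi2satKaiserhull}(\ref{rmk:itm4}), and your derivation of $\Pi_2$-completeness from the biconditional together with $T_\forall\subseteq\mathrm{KH}(T)$ is the intended reading.
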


In particular any model of the Kaiser hull of a $\Pi_1$-complete
$T$ realizes simultaneously 
all $\Pi_2$-sentences which are individually consistent with $T_\forall$.

For theories $T$ of interests to us their Kaiser hull can be described in the same terms, 
but the proof is much more delicate.
We start with the following weaker property which holds for arbitrary theories:
\begin{fact}\label{fac:charkaihullnonpi1comp}
Given a $\tau$-theory $T$, its Kaiser hull $\mathrm{KH}(T)$ contains
the set of $\Pi_2$-sentences $\psi$ for $\tau$ such that for all complete $S\supseteq T$,
$S_\forall\cup\bp{\psi}$ is consistent.
\end{fact}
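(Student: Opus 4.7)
The plan is to reduce the claim directly to Proposition \ref{prop:pi2satKaiserhull}(\ref{rmk:itm4}), which asserts that for a $T$-ec structure $\mathcal{M}$ with complete theory $S=\mathrm{Th}(\mathcal{M})$, a $\Pi_2$-sentence $\psi$ holds in $\mathcal{M}$ if and only if it holds in some model of $S_\forall$. Consequently, given a $T$-ec $\mathcal{M}$ and a $\Pi_2$-sentence $\psi$ satisfying the hypothesis of the Fact, it suffices to exhibit a single $\tau$-model of the universal theory of $\mathcal{M}$ together with $\psi$; the proposition then delivers $\mathcal{M}\models\psi$, and since $\mathcal{M}$ was an arbitrary $T$-ec model this places $\psi$ in $\mathrm{KH}(T)$.

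The first step is to manufacture a complete $S^\ast\supseteq T$ whose universal fragment coincides with $\mathrm{Th}_\forall(\mathcal{M})$. By definition of $T$-ec there is some $\mathcal{N}\sqsupseteq\mathcal{M}$ with $\mathcal{N}\models T$; set $S^\ast:=\mathrm{Th}(\mathcal{N})$, a complete $\tau$-theory extending $T$.

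The second step is to identify $S^\ast_\forall$ with $\mathrm{Th}_\forall(\mathcal{M})$. Because $\mathcal{M}$ is $T$-ec and $\mathcal{N}\sqsupseteq\mathcal{M}$ models $T$, we have $\mathcal{M}\prec_1\mathcal{N}$; taking negations of $\Sigma_1$-formulae, $\mathrm{Th}_\forall(\mathcal{M})=\mathrm{Th}_\forall(\mathcal{N})$, and the right-hand side equals $S^\ast_\forall$ since $S^\ast$ is complete. Applying the hypothesis of the Fact to $S^\ast$ yields the consistency of $S^\ast_\forall\cup\{\psi\}=\mathrm{Th}_\forall(\mathcal{M})\cup\{\psi\}$, which furnishes the required $\tau$-model.

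The third (and essentially automatic) step is to invoke Proposition \ref{prop:pi2satKaiserhull}(\ref{rmk:itm4}) with its $S$ taken to be $\mathrm{Th}(\mathcal{M})$ (so its $S_\forall$ is $\mathrm{Th}_\forall(\mathcal{M})$): the $\Pi_2$-sentence $\psi$ is realized in some model of $\mathrm{Th}_\forall(\mathcal{M})$, hence in $\mathcal{M}$ itself. No genuine obstacle appears; the one conceptual move worth highlighting is that the $T$-ec hypothesis guarantees the universal theory of $\mathcal{M}$ is the universal fragment of a complete extension of $T$, so that the quantification \emph{``for all complete $S\supseteq T$''} in the statement is strong enough to catch the extension relevant to $\mathcal{M}$.
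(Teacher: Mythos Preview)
Your proof is correct and follows essentially the same route as the paper: both fix an arbitrary $T$-ec $\mathcal{M}$, pass to a superstructure $\mathcal{N}\models T$, take its complete theory, use $\mathcal{M}\prec_1\mathcal{N}$ to identify $\mathrm{Th}_\forall(\mathcal{N})$ with $\mathrm{Th}_\forall(\mathcal{M})$, and then apply the hypothesis together with a previously established result. The only cosmetic difference is that the paper cites Fact~\ref{fac:presECmod} (to get that $\mathcal{M}$ is $\mathrm{Th}(\mathcal{N})$-ec) and Fact~\ref{fac:charKaihull} (for the $\Pi_1$-complete case), whereas you invoke Proposition~\ref{prop:pi2satKaiserhull}(\ref{rmk:itm4}) directly; these are equivalent packagings of the same argument.
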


\begin{proof}
Assume $\psi$ is a $\Pi_2$-sentence such that 
for all complete $S\supseteq T$,
$S_\forall\cup\bp{\psi}$ is consistent. We must show that
$\psi$ holds in all $T$-ec models.

Fix $\mathcal{M}$ an existentially closed model for $T$ (it exists by Lemma~\ref{lem:exTecmod}); 
we must show that
$\mathcal{M}\models \psi$.
Let $\mathcal{N}\sqsupseteq\mathcal{M}$ be a model of $T$ and $S$ be the 
$\tau$-theory of $\mathcal{N}$. Then $S$ is a complete theory and
$\mathcal{M}\models S_{\forall}$ since 
$\mathcal{M}\prec_1\mathcal{N}$ (being $T$-ec).
Since $S\supseteq T$, $\mathcal{M}$ is also $S$-ec (by Fact~\ref{fac:presECmod}).
Since  $S_\forall\cup\bp{\psi}$ is consistent, and $S_\forall$ is 
$\Pi_1$-complete, we obtain that
$\mathcal{M}$ models $\psi$, being an $S_\forall$-ec model, and using Fact \ref{fac:charKaihull}. 
\end{proof}

We will show in Lemma~\ref{fac:proofthm1-2}
that the set of $\Pi_2$-sentences described in the Fact provides an equivalent characterization of the Kaiser
hull for many theories admitting a model companion, among which those considered in the previous sections. 


\subsection{Model completeness}

It is possible (depending on the choice of the 
theory $T$)
that there are models of the Kaiser hull of $T$ which are not 
$T$-ec\footnote{This is the main issue we face in the proof of Thm. \ref{Thm:mainthm-4}: 
we cannot prove that the theory $T$ in its assumption has a model companion, 
we will only be able to compute that its Kaiser hull is described 
by \ref{Thm:mainthm-4}(\ref{Thm:mainthm-4-1}).}. 
Robinson has come up with two model theoretic properties 
(model completeness and model companionship)
which describe the case in which the models of the Kaiser hull of 
$T$ are exactly the class of 
$T$-ec models (even in case $T$ is not a complete theory).

\begin{definition}\label{def:modcompl}
A $\tau$-theory $T$ is \emph{model complete} if for all $\tau$-models $\mathcal{M}$ and $\mathcal{N}$ of $T$ we have that 
$\mathcal{M} \sqsubseteq \mathcal{N}$ implies $\mathcal{M} \prec \mathcal{N}$. 
\end{definition}

Remark that theories admitting quantifier elimination are automatically model complete.
On the other hand model complete theories need not be complete\footnote{For example the theory of 
algebraically closed fields is model complete, but algebraically closed fields of different characteristics are 
elementarily inequivalent.}.
However for theories $T$ which are $\Pi_1$-complete,
model completeness entails completeness:
any two models of a $\Pi_1$-complete, model complete $T$ share the same 
$\Pi_1$-theory, therefore 
if $T_1\supseteq T$ and $T_2\supseteq T$ with $\mathcal{M}_i$ a model of $T_i$, we can suppose (by Lemma 
\ref{lem:biembequivequnivth}) that $\mathcal{M}_1\sqsubseteq\mathcal{M}_2$. Since they are both models of $T$,
model completeness entails that $\mathcal{M}_1\prec \mathcal{M}_2$.

\begin{lemma}\cite[Lemma 3.2.7]{TENZIE}\label{lem:robtest}
(Robinson's test) Let $T$ be a $\tau$-theory. The following are equivalent:
\begin{enumerate}[(a)]
\item \label{lem:robtest-1} $T$ is model complete. 
\item \label{lem:robtest-2} Any model of $T$ is $T$-ec.
\item \label{lem:robtest-4} Each \emph{existential} $\tau$-formula $\phi(\vec{x})$ in free variables $\vec{x}$ 
is $T$-equivalent to a universal $\tau$-formula $\psi(\vec{x})$ in the same free variables. 
\item \label{lem:robtest-3} Each $\tau$-formula $\phi(\vec{x})$ in free variables $\vec{x}$ 
is $T$-equivalent to a universal $\tau$-formula $\psi(\vec{x})$ in the same free variables. 
\end{enumerate}
\end{lemma}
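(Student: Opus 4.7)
The plan is to close the cycle $(a)\Rightarrow(b)\Rightarrow(c)\Rightarrow(a)$ together with the equivalence $(c)\Leftrightarrow(d)$. The implication $(d)\Rightarrow(c)$ is immediate, as existential formulae form a subclass of all $\tau$-formulae. Likewise $(a)\Rightarrow(b)$ is immediate: $\mathcal{M}\models T$ embeds into itself, and if $\mathcal{M}\sqsubseteq\mathcal{N}\models T$, model completeness yields $\mathcal{M}\prec\mathcal{N}$, hence $\mathcal{M}\prec_{\Sigma_1}\mathcal{N}$, so $\mathcal{M}$ is $T$-ec.

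For $(c)\Rightarrow(d)$ I would induct on the structure of a $\tau$-formula, viewing each formula as built from atomic formulae using $\neg$, $\wedge$, and $\forall$ (with $\vee$ and $\exists$ as defined abbreviations). Atomic formulae are universal. If $\phi\equiv_T\psi$ with $\psi$ universal, then $\phi\wedge\phi'\equiv_T\psi\wedge\psi'$ is universal after merging prefixes, and $\forall\vec{y}\phi\equiv_T\forall\vec{y}\psi$ remains universal. For negation, $\neg\phi\equiv_T\neg\psi$ where $\neg\psi$ is $T$-equivalent to an existential formula (push the negation past the universal prefix onto the quantifier-free kernel), and (c) yields a universal equivalent. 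For $(c)\Rightarrow(a)$: given $\mathcal{M}\sqsubseteq\mathcal{N}$ both models of $T$ and any $\tau$-formula $\phi(\vec{x})$, by $(c)\Rightarrow(d)$ both $\phi$ and $\neg\phi$ are $T$-equivalent to universal formulae $\psi_1,\psi_2$. Universal formulae descend to substructures, so for $\vec{a}\in\mathcal{M}^{<\omega}$ we have $\mathcal{N}\models\phi(\vec{a})\Rightarrow\mathcal{N}\models\psi_1(\vec{a})\Rightarrow\mathcal{M}\models\psi_1(\vec{a})\Rightarrow\mathcal{M}\models\phi(\vec{a})$, and symmetrically via $\psi_2$ for the converse, giving $\mathcal{M}\prec\mathcal{N}$.

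The substantive implication is $(b)\Rightarrow(c)$, which I expect to be the main obstacle. Fix an existential $\tau$-formula $\phi(\vec{x})$, add fresh constants $\vec{c}$, and set
\[
\Gamma(\vec{c}) = \{\psi(\vec{c}) : \psi(\vec{x}) \text{ is a universal } \tau\text{-formula with } T\vdash \phi(\vec{c})\to\psi(\vec{c})\}.
\]
I claim $T\cup\Gamma(\vec{c})\vdash\phi(\vec{c})$; compactness then produces finitely many members of $\Gamma$ whose conjunction is $T$-equivalent to $\phi(\vec{c})$, and this conjunction is itself $T$-equivalent to a single universal formula after merging prefixes, finishing the proof. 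Suppose instead that $\mathcal{M}$ is a model of $T\cup\Gamma(\vec{c})\cup\{\neg\phi(\vec{c})\}$. I argue that $T\cup\Delta_0(\mathcal{M})\cup\{\phi(\vec{c})\}$ is consistent as a $\tau\cup\mathcal{M}$-theory: otherwise by compactness some quantifier-free $\tau$-formula $\theta(\vec{c},\vec{y})$ and $\vec{a}\in\mathcal{M}^{<\omega}$ satisfy $\theta(\vec{c},\vec{a})\in\Delta_0(\mathcal{M})$ and $T\vdash\theta(\vec{c},\vec{a})\to\neg\phi(\vec{c})$; since neither $\vec{c}$ nor $\vec{a}$ appear in $T$, this upgrades to $T\vdash\phi(\vec{c})\to\forall\vec{y}\neg\theta(\vec{c},\vec{y})$, putting $\forall\vec{y}\neg\theta(\vec{c},\vec{y})$ into $\Gamma(\vec{c})$ and forcing $\mathcal{M}\models\forall\vec{y}\neg\theta(\vec{c},\vec{y})$, contradicting $\mathcal{M}\models\theta(\vec{c},\vec{a})$. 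Realizing the consistent theory yields $\mathcal{N}\sqsupseteq\mathcal{M}$ with $\mathcal{N}\models T\cup\{\phi(\vec{c})\}$. By (b) $\mathcal{M}$ is $T$-ec, so $\mathcal{M}\prec_{\Sigma_1}\mathcal{N}$, and the existential sentence $\phi(\vec{c})$ pulls back to $\mathcal{M}$, contradicting $\mathcal{M}\models\neg\phi(\vec{c})$.
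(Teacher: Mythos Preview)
Your proof is correct and follows essentially the same approach as the paper's: the same cycle of implications, the same $\Gamma(\vec{c})$ construction for $(b)\Rightarrow(c)$, and the same diagram-plus-compactness argument to produce the superstructure $\mathcal{N}$. The only cosmetic differences are that the paper organizes the equivalences as $(a)\Rightarrow(b)\Rightarrow(c)\Rightarrow(d)\Rightarrow(a)$ and proves $(c)\Rightarrow(d)$ by induction on the $\Pi_n/\Sigma_n$ hierarchy rather than by your structural induction, and that the paper phrases the consistency claim for $T\cup\Delta_0(\mathcal{M})\cup\{\phi(\vec{c})\}$ positively rather than by contradiction; the content is the same.
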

Remark that \ref{lem:robtest-3} (or \ref{lem:robtest-4}) shows that being a model complete $\tau$-theory $T$ is expressible by
a $\Delta_0(\tau,T)$-property in any model of $\ZFC$, hence it is absolute with respect 
to forcing.

\begin{proof}
\emph{}

\begin{description}
\item[\ref{lem:robtest-1} implies \ref{lem:robtest-2}]
Immediate.
\item[\ref{lem:robtest-2} implies \ref{lem:robtest-4}]
Fix an existential formula $\phi(\vec{x})$ in free variables $x_1,\dots,x_n$.
Let $\Gamma$ be the set of universal formulae $\theta(\vec{x})$ such that
\[
T\vdash\forall\vec{x}\,(\phi(\vec{x})\rightarrow \theta(\vec{x})).
\]
Note that $\Gamma$ is closed under finite conjunctions and disjunctions.
Let $\vec{c}=(c_1,\dots,c_n)$ be a finite set of new constant symbols
and $\Gamma(\vec{c})=\bp{\theta(\vec{c}):\, \theta(\vec{x})\in \Gamma}$.

It suffices to prove 
\begin{equation}\label{eqn:keyeq2-->3}
T\cup\Gamma(\vec{c})\models\phi(\vec{c});
\end{equation}
if this is the case, by compactness, a finite subset $\Gamma_0(\vec{c})$ of $\Gamma(\vec{c})$ is such that
\[
T\cup\Gamma_0(\vec{c})\models\phi(\vec{c});
\]
letting $\bar{\theta}(\vec{x}):=\bigwedge\bp{\psi(\vec{x}): \psi(\vec{c})\in\Gamma_0(\vec{c})}$,
the latter 
gives that 
\[
T\models \forall\vec{x}\,(\bar{\theta}(\vec{x})\rightarrow\phi(\vec{x}))
\]
(since the constants $\vec{c}$ do not appear in $T$). 

$\bar{\theta}(\vec{x})\in \Gamma$ is a universal formula witnessing \ref{lem:robtest-4} for $\phi(\vec{x})$.

So we prove (\ref{eqn:keyeq2-->3}):
\begin{proof}
Let $\mathcal{M}$ be a $\tau\cup\bp{c_1,\dots,c_n}$-model of 
$T\cup\Gamma(\vec{c})$. We must show that
$\mathcal{M}$ models $\phi(\vec{c})$.

The key step is to prove the following:

\begin{claim}
$T\cup\Delta_0(\mathcal{M})\cup\bp{\phi(\vec{c})}$
is consistent (where $\Delta_0(\mathcal{M})$ is the $\tau\cup\bp{c_1,\dots,c_n}$-atomic 
diagram of $\mathcal{M}$).
\end{claim}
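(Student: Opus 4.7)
The plan is to prove the Claim by contradiction using compactness, mirroring the pattern used in the proof of Lemma~\ref{lem:keylemembed}. Suppose $T\cup\Delta_0(\mathcal{M})\cup\bp{\phi(\vec{c})}$ is inconsistent. Since $\Delta_0(\mathcal{M})$ is closed under finite conjunctions, by compactness there is a single quantifier free $\tau\cup\bp{c_1,\dots,c_n}\cup\mathcal{M}$-formula $\delta(\vec{c},\vec{d})\in\Delta_0(\mathcal{M})$ (where $\vec{d}\in(\mathcal{M}\setminus\bp{c_1^{\mathcal{M}},\dots,c_n^{\mathcal{M}}})^{<\omega}$ are the fresh constants naming the additional elements of $\mathcal{M}$ occurring in it) such that
\[
T\cup\bp{\phi(\vec{c}),\delta(\vec{c},\vec{d})}\vdash\bot.
\]

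Next I would eliminate the constants. Since the constants $\vec{d}$ never occur in $T$ nor in $\phi(\vec{c})$, the above entails
\[
T\vdash\phi(\vec{c})\rightarrow\forall\vec{y}\,\neg\delta(\vec{c},\vec{y}).
\]
Since the constants $\vec{c}$ never appear in $T$, we may further quantify over them and obtain
\[
T\vdash\forall\vec{x}\,(\phi(\vec{x})\rightarrow\forall\vec{y}\,\neg\delta(\vec{x},\vec{y})).
\]
The formula $\theta(\vec{x}):=\forall\vec{y}\,\neg\delta(\vec{x},\vec{y})$ is universal and, by the displayed implication, belongs to $\Gamma$. Hence $\theta(\vec{c})\in\Gamma(\vec{c})$, and since $\mathcal{M}$ models $T\cup\Gamma(\vec{c})$ we conclude $\mathcal{M}\models\forall\vec{y}\,\neg\delta(\vec{c},\vec{y})$. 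But $\delta(\vec{c},\vec{d})\in\Delta_0(\mathcal{M})$ means precisely that $\mathcal{M}\models\delta(\vec{c},\vec{d})$, a contradiction.

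The only delicate point — and the one on which the entire Robinson test pivots — is bookkeeping the three disjoint classes of constant symbols: the constants in $\tau$ (appearing in $T$), the fresh constants $\vec{c}$ (appearing in $\phi(\vec{c})$ and in $\Gamma(\vec{c})$ but not in $T$), and the constants naming elements of $\mathcal{M}\setminus\bp{\vec{c}^{\mathcal{M}}}$ used in $\Delta_0(\mathcal{M})$ (appearing in neither $T$ nor in $\phi(\vec{c})$). Once the Claim is established, a $\tau\cup\bp{\vec{c}}\cup\mathcal{M}$-model $\mathcal{N}$ of $T\cup\Delta_0(\mathcal{M})\cup\bp{\phi(\vec{c})}$ furnishes, on forgetting the constants not in $\tau\cup\bp{\vec{c}}$, a $\tau\cup\bp{\vec{c}}$-superstructure of $\mathcal{M}$ which models $T$ and $\phi(\vec{c})$; as $\mathcal{M}$ is itself a model of $T$, hence $T$-ec by (\ref{lem:robtest-2}), we get $\mathcal{M}\prec_1\mathcal{N}$, and since $\phi$ is existential $\mathcal{M}\models\phi(\vec{c})$, completing the verification of (\ref{eqn:keyeq2-->3}).
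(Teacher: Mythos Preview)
Your proof is correct and is essentially the contrapositive of the paper's argument: the paper shows directly that for every $\psi(\vec{c},\vec{a})\in\Delta_0(\mathcal{M})$ the formula $\forall\vec{y}\,\neg\psi(\vec{c},\vec{y})$ cannot lie in $\Gamma(\vec{c})$ (since $\mathcal{M}$ models its negation), whence $T\cup\bp{\phi(\vec{c}),\psi(\vec{c},\vec{a})}$ is consistent, and then concludes by compactness; you run the same idea backwards starting from a hypothetical inconsistency. The bookkeeping of the three tiers of constants and the subsequent use of the Claim to derive (\ref{eqn:keyeq2-->3}) match the paper exactly.
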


Assume the Claim holds and let $\mathcal{N}$ realize the above theory.
Then 
\[
\mathcal{M}\sqsubseteq\mathcal{N}\restriction(\tau\cup\bp{\vec{c}}).
\]
Hence 
\[
\mathcal{M}\restriction\tau\sqsubseteq\mathcal{N}\restriction\tau.
\]
By \ref{lem:robtest-2} 
\[
\mathcal{M}\restriction\tau\prec_1\mathcal{N}\restriction\tau.
\]

Now let $b_1,\dots,b_n\in\mathcal{M}$ be the interpretations of $c_1,\dots,c_n$.
Then 
\[
\mathcal{N}\restriction\tau\models\phi(b_1,\dots,b_n).
\]
Since $\phi(\vec{x})$ is $\Sigma_1$ for $\tau$, we get that
\[
\mathcal{M}\restriction\tau\models\phi(b_1,\dots,b_n),
\]
hence
\[
\mathcal{M}\models\phi(c_1,\dots,c_n),
\]
and we are done.

So we are left with the proof of the Claim.
\begin{proof}
Let $\psi(\vec{x},\vec{y})$ be a quantifier free $\tau$-formula such that
$\psi(\vec{c},\vec{a})\in\Delta_0(\mathcal{M})$ for some $\vec{a}\in\mathcal{M}$.

Clearly $\mathcal{M}$ models
$\exists \vec{y}\psi(\vec{c},\vec{y})$.

Then the universal formula $\neg\exists \vec{y}\psi(\vec{c},\vec{y})\not\in\Gamma(\vec{c})$, since
$\mathcal{M}$ models its negation and $\Gamma(\vec{c})$ at the same time.

This gives that 
\[
T\not\vdash\forall\vec{x}\,(\phi(\vec{x})\rightarrow \neg\exists \vec{y}\psi(\vec{x},\vec{y})),
\]
i.e.
\[
T\cup \bp{\exists \vec{x}\,[\phi(\vec{x})\wedge\exists \vec{y}\psi(\vec{x},\vec{y})]}
\]
is consistent.

We conclude that
\[
T\cup\bp{\phi(\vec{c})\wedge\psi(\vec{c},\vec{a})}
\]
is consistent for any tuple $a_1,\dots,a_k\in\mathcal{M}$ and formula $\psi$ such that
$\mathcal{M}$ models $\psi(\vec{c},\vec{a})$
(since $\vec{c},\vec{a}$ are constants never appearing in the formulae of $T$).

This shows that $T\cup\Delta_0(\mathcal{M})\cup\bp{\phi(\vec{c})}$ is consistent.
\end{proof}

(\ref{eqn:keyeq2-->3}) is proved.
\end{proof}

\item[\ref{lem:robtest-4} implies \ref{lem:robtest-3}]
We prove by induction on $n$ that $\Pi_n$-formulae and $\Sigma_n$-formulae are $T$-equivalent to a $\Pi_1$-formula.

\ref{lem:robtest-4} gives the base case $n=1$ of the induction for $\Sigma_1$-formulae
and (trivially) for $\Pi_1$-formulae. 

Assuming we have proved the implication for all $\Sigma_{n}$ formulae for some fixed 
$n>0$, we obtain it  for $\Pi_{n+1}$-formulae $\forall\vec{x}\psi(\vec{x},\vec{y})$ (with $\psi(\vec{x},\vec{y})$
$\Sigma_n$)
applying the inductive assumptions to $\psi(\vec{x},\vec{y})$; 
next we 
observe that a $\Sigma_{n+1}$-formula is equivalent to the negation of a
$\Pi_{n+1}$-formula, which is in turn equivalent to the negation of a universal formula (by what we already argued),
which is equivalent to an existential formula, and thus equivalent to a universal formula (by \ref{lem:robtest-4}).

\item[\ref{lem:robtest-3} implies \ref{lem:robtest-1}]
By \ref{lem:robtest-3} every formula is $T$-equivalent both to a universal formula and to an existential formula (since its negation is $T$-equivalent to a universal formula).

This gives that $\mathcal{M}\prec \mathcal{N}$ whenever $\mathcal{M}\sqsubseteq\mathcal{N}$ are models of $T$, since truth of universal formulae is inherited by substructures, while truth of existential formulae pass to superstructures. 
\end{description}
\end{proof}

We will also need the following:

\begin{fact}\label{fac:proofthm1}
Let $\tau$ be a signature and 
$T$ a model complete $\tau$-theory. 
Let $\sigma\supseteq \tau$ be a signature and 
$T^*\supseteq T$ a $\sigma$-theory such that every
$\sigma$-formula is $T^*$-equivalent to a $\tau$-formula. Then $T^*$ is model complete.
\end{fact}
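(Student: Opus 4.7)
The plan is to apply Robinson's test (Lemma \ref{lem:robtest}) in both directions: first to extract from the model completeness of $T$ that every $\tau$-formula is $T$-equivalent to a universal $\tau$-formula, and then to use this to verify the analogous elimination for $T^*$ in the signature $\sigma$, which by Robinson's test again yields the model completeness of $T^*$.

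Concretely, I would argue as follows. Fix a $\sigma$-formula $\phi(\vec{x})$. By the hypothesis on $T^*$ and $\sigma$, there is a $\tau$-formula $\psi(\vec{x})$ (in the same free variables) such that
\[
T^*\vdash\forall\vec{x}\,(\phi(\vec{x})\leftrightarrow\psi(\vec{x})).
\]
Since $T$ is model complete, Lemma \ref{lem:robtest} \ref{lem:robtest-3} applied to $\psi(\vec{x})$ yields a universal $\tau$-formula $\theta(\vec{x})$ with
\[
T\vdash\forall\vec{x}\,(\psi(\vec{x})\leftrightarrow\theta(\vec{x})).
\]
As $T^*\supseteq T$, the same equivalence holds modulo $T^*$, and composing gives
\[
T^*\vdash\forall\vec{x}\,(\phi(\vec{x})\leftrightarrow\theta(\vec{x})).
\]
Because $\sigma\supseteq\tau$, the formula $\theta(\vec{x})$ is a universal $\sigma$-formula. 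Hence every $\sigma$-formula is $T^*$-equivalent to a universal $\sigma$-formula, and Robinson's test \ref{lem:robtest} \ref{lem:robtest-3} (in the reverse direction) allows us to conclude that $T^*$ is model complete.

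There is essentially no obstacle here: the statement is a purely formal consequence of Robinson's test once one observes that being universal is preserved when enlarging the signature, so that a universal $\tau$-formula remains universal when viewed as a $\sigma$-formula. The only point to double-check is that the equivalence between $\phi$ and $\theta$ is indeed witnessed at the level of $T^*$ and not merely of $T$, which is immediate from $T^*\supseteq T$.
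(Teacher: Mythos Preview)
Your proof is correct and follows essentially the same approach as the paper: both reduce to Robinson's test by observing that every $\sigma$-formula is $T^*$-equivalent to a $\tau$-formula, which in turn is $T$-equivalent (hence $T^*$-equivalent) to a universal $\tau$-formula, and that universal $\tau$-formulae remain universal in $\sigma$. The paper's proof is a two-line summary of exactly what you have written out in detail.
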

\begin{proof}
By the model completeness of $T$ and the assumptions on $T^*$ we get that every $\sigma$-formula is equivalent
to a $\Pi_1$-formula for $\tau\subseteq\sigma$. We conclude by Robinson's test.
\end{proof}

We will later show that model complete theories are the Kaiser hull of their universal fragment.
This will be part of a broad family of tameness
properties for first order theories which require a new concept in order to be properly formulated, that of model companionship.

\subsection{Model companionship}
Model completeness comes in pairs with another fundamental concept which generalizes to arbitrary first order 
theories the relation existing between algebraically closed fields and commutative rings without zero-divisors. As a matter of fact, the case described below
occurs when $T^*$ is the theory of algebraically closed fields
and 
$T$ is the theory of commutative rings with no zero divisors.

\begin{definition}\label{def:dmodcompship}
Given two theories $T$ and $T^*$ in the same language $\tau$, 
$T^*$ is the \emph{model companion} of $T$ if the following conditions holds:
\begin{enumerate}
\item Each model of $T$ can be extended to a model of $T^*$.
\item Each model of $T^*$ can be extended to a model of $T$.
\item $T^*$ is model complete. 
\end{enumerate}
\end{definition}

Different theories can have the same model companion, for example the theory of fields 
and the theory of commutative rings with 
no zero-divisors which are not fields both have the theory of algebraically closed fields  
as their model companion.

\begin{theorem}\cite[Thm 3.2.14]{TENZIE}
\label{thm:modcompletionchar}
Let $T$ be a first order theory. If its model companion $T^*$ exists, then
\begin{enumerate}
\item \label{thm:modcompletionchar-1} $T_{\forall} = T^*_{\forall}$.
\item \label{thm:modcompletionchar-2} $T^*$ is the theory of the existentially closed models of $T_{\forall}$. 
\end{enumerate}
\end{theorem}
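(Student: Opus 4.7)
The plan is to prove the two parts separately. For part (\ref{thm:modcompletionchar-1}), I will use the two mutual embedding clauses of Definition~\ref{def:dmodcompship} directly. Given a universal $\tau$-sentence $\psi$ with $T \vdash \psi$, any $\mathcal{M} \models T^*$ extends to some $\mathcal{N} \models T$ in which $\psi$ holds; since $\psi$ is universal it passes down to the substructure $\mathcal{M}$, giving $T^* \vdash \psi$. The converse inclusion is symmetric, using the other extension clause.

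For part (\ref{thm:modcompletionchar-2}), I plan to show that the class of models of $T^*$ coincides with the class of $T_\forall$-existentially closed structures; this suffices because by part (\ref{thm:modcompletionchar-1}) together with Proposition~\ref{prop:pi2satKaiserhull}(\ref{rmk:itm0}) the notions of $T_\forall$-ec and $T$-ec coincide. For the direction ``$\mathcal{M} \models T^*$ implies $\mathcal{M}$ is $T$-ec'', given $\mathcal{N} \supseteq \mathcal{M}$ with $\mathcal{N} \models T$, I extend $\mathcal{N}$ further to $\mathcal{P} \models T^*$; model completeness of $T^*$ gives $\mathcal{M} \prec \mathcal{P}$, and a direct chase of witnesses through $\mathcal{M} \sqsubseteq \mathcal{N} \sqsubseteq \mathcal{P}$ for existential $\tau$-formulae with parameters in $\mathcal{M}$ yields $\mathcal{M} \prec_1 \mathcal{N}$.

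For the converse direction, which I expect to be the main obstacle, let $\mathcal{M}$ be $T_\forall$-ec and extend it to $\mathcal{P} \models T^*$ via Lemma~\ref{lem:keylemembed}; note $\mathcal{M} \prec_1 \mathcal{P}$ since $\mathcal{M}$ is $T_\forall$-ec and $\mathcal{P} \models T_\forall$. The plan is to prove $\mathcal{M} \prec \mathcal{P}$ by the Tarski--Vaught test, after which $\mathcal{M} \models T^*$ is immediate from $\mathcal{P} \models T^*$. Given $\vec{a} \in \mathcal{M}$ and $\phi(\vec{x}, y)$ with $\mathcal{P} \models \exists y\,\phi(\vec{a}, y)$, Robinson's test (Lemma~\ref{lem:robtest}) applied to the model complete $T^*$ supplies an existential $\tau$-formula $\theta(\vec{x}, y) \equiv \exists \vec{z}\,\chi(\vec{x}, y, \vec{z})$ which is $T^*$-equivalent to $\phi(\vec{x}, y)$; so the existential $\tau$-sentence $\exists y \exists \vec{z}\,\chi(\vec{a}, y, \vec{z})$ holds in $\mathcal{P}$, hence in $\mathcal{M}$ by $\mathcal{M} \prec_1 \mathcal{P}$, supplying $b, \vec{e} \in \mathcal{M}$ with $\mathcal{M} \models \chi(\vec{a}, b, \vec{e})$; pushing this back up by quantifier-freeness of $\chi$ gives $\mathcal{P} \models \theta(\vec{a}, b)$, and the $T^*$-equivalence, invoked in $\mathcal{P}$ only, delivers the Tarski--Vaught witness $\mathcal{P} \models \phi(\vec{a}, b)$. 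The central subtlety to watch is that the equivalence $\phi \leftrightarrow \theta$ cannot be used inside $\mathcal{M}$ (which is not yet known to satisfy $T^*$), so the argument must be arranged so that every appeal to this equivalence stays within $\mathcal{P}$, and the single transfer from $\mathcal{P}$ down to $\mathcal{M}$ is effected entirely through the $\Sigma_1$-elementarity supplied by being $T_\forall$-ec.
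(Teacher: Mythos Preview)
Your proof is correct. Part (\ref{thm:modcompletionchar-1}) and the forward direction of part (\ref{thm:modcompletionchar-2}) follow the paper's approach exactly: the paper just cites Lemma~\ref{lem:keylemembed} for the first and Robinson's test~\ref{lem:robtest}\ref{lem:robtest-2} together with Proposition~\ref{prop:pi2satKaiserhull}(\ref{rmk:itm0}) for the second, and your arguments are the unpacked content of those citations.

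The one place you do more than the paper is the backward direction of part (\ref{thm:modcompletionchar-2}), namely that every $T_\forall$-ec structure is already a model of $T^*$. The paper's proof simply asserts that ``$T^*$ is the theory realized exactly by the $T^*$-ec models'' as a consequence of Robinson's test, but Lemma~\ref{lem:robtest}\ref{lem:robtest-2} as stated only gives one inclusion (models of $T^*$ are $T^*$-ec). Your Tarski--Vaught argument, pulling an existential equivalent of $\phi$ down via $\Sigma_1$-elementarity and then invoking the $T^*$-equivalence only inside $\mathcal{P}$, is a clean and self-contained way to supply the missing inclusion. The paper effectively defers this point to the subsequent Theorem~\ref{thm:uniqmodcompan}, where it is shown that $T^*$ is axiomatized by its $\Pi_2$-consequences; your route avoids that forward reference.
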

\begin{proof}
\emph{}

\begin{enumerate}
\item By Lemma~\ref{lem:keylemembed}.
\item By  Robinson's test \ref{lem:robtest} $T^*$ is the theory realized exactly by the $T^*$-ec models; 
by Proposition \ref{prop:pi2satKaiserhull}(\ref{rmk:itm0}) $\mathcal{M}$ is $T^*$-ec if and only if it is
$T^*_\forall$-ec;
by (\ref{thm:modcompletionchar-1}) $T^*_\forall=T_\forall$.
\end{enumerate}
\end{proof}

An immediate by-product of the above Theorem is that
the model companion of a theory does not necessarily exist, but, if it does, it is unique
and is its Kaiser hull.

\begin{theorem} \cite[Thm. 3.2.9]{TENZIE}\label{thm:uniqmodcompan}
Assume $T$ has a model companion $T^*$. Then
$T^*$ is axiomatized by its $\Pi_2$-consequences and is the Kaiser hull of $T_\forall$.

Moreover $T^*$ is the unique model companion of $T$ and is characterized by the property of 
being the unique model complete theory $S$ such that $S_\forall=T_\forall$.
\end{theorem}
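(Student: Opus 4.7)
The plan is to first establish that $T^*$ is axiomatized by its $\Pi_2$-consequences; the identification with $\mathrm{KH}(T_\forall)$ and the two uniqueness statements will then follow almost immediately from Theorem \ref{thm:modcompletionchar} and Robinson's test.

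To show $T^*$ is axiomatized by $(T^*)_{\Pi_2}$, I would fix an arbitrary $\tau$-model $\mathcal{M}\models(T^*)_{\Pi_2}$ and prove $\mathcal{M}$ is $T^*$-ec; Robinson's test (Lemma \ref{lem:robtest}) then forces $\mathcal{M}\models T^*$. Since $T_\forall=(T^*)_\forall\subseteq(T^*)_{\Pi_2}$ by Theorem \ref{thm:modcompletionchar}, Lemma \ref{lem:keylemembed} yields some $\mathcal{N}\sqsupseteq\mathcal{M}$ with $\mathcal{N}\models T^*$, covering the embeddability clause. For the closure clause, fix any such $\mathcal{N}$, a $\Sigma_1$-formula $\phi(\vec{x})$, and $\vec{a}\in\mathcal{M}^{<\omega}$ with $\mathcal{N}\models\phi(\vec{a})$; the goal is $\mathcal{M}\models\phi(\vec{a})$. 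Model completeness of $T^*$ provides via Robinson's test a $\Pi_1$-formula $\psi(\vec{x})$ with $T^*\vdash\forall\vec{x}(\phi(\vec{x})\leftrightarrow\psi(\vec{x}))$.

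The main obstacle is to observe that this equivalence sentence is itself $\Pi_2$. Writing $\phi=\exists\vec{y}\,\phi_0$ and $\psi=\forall\vec{z}\,\psi_0$ with $\phi_0,\psi_0$ quantifier free, the implication $\phi\to\psi$ rewrites as $\forall\vec{y}\,\forall\vec{z}\,(\phi_0\to\psi_0)$, which is $\Pi_1$, while $\psi\to\phi$ is of the form $\Pi_1\to\Sigma_1$ and thus $\Sigma_1$; adding a universal prefix over $\vec{x}$ presents $\forall\vec{x}(\phi\leftrightarrow\psi)$ as the conjunction of a $\Pi_1$-sentence and a $\forall\vec{x}\Sigma_1$-sentence, hence as a $\Pi_2$-sentence. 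Therefore $\forall\vec{x}(\phi\leftrightarrow\psi)\in(T^*)_{\Pi_2}$ and holds in $\mathcal{M}$. Since $\mathcal{N}\models T^*$ gives $\mathcal{N}\models\psi(\vec{a})$, and $\psi$ being $\Pi_1$ passes from $\mathcal{N}$ down to the substructure $\mathcal{M}$, I conclude $\mathcal{M}\models\psi(\vec{a})$, and the equivalence then yields $\mathcal{M}\models\phi(\vec{a})$; this shows $\mathcal{M}$ is $T^*$-ec and completes the first step.

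The identification with the Kaiser hull now follows: Theorem \ref{thm:modcompletionchar}(\ref{thm:modcompletionchar-2}) identifies $T^*$ with the theory of the $T_\forall$-ec structures, so $(T^*)_{\Pi_2}$ coincides by definition with $\mathrm{KH}(T_\forall)$, and combining with the previous step gives $T^*=(T^*)_{\Pi_2}=\mathrm{KH}(T_\forall)$ as theories. For the final uniqueness claim, suppose $S$ is any model complete $\tau$-theory with $S_\forall=T_\forall$; by Robinson's test the models of $S$ are the $S$-ec structures, which by Proposition \ref{prop:pi2satKaiserhull}(\ref{rmk:itm0}) coincide with the $S_\forall$-ec $=T_\forall$-ec structures, i.e.\ with the models of $T^*$. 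Hence $S=T^*$ as theories, which covers both the uniqueness of the model companion of $T$ and the characterization as the unique model complete $S$ with $S_\forall=T_\forall$.
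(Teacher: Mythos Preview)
Your proof is correct and rests on the same key observation as the paper's: the $\Sigma_1\leftrightarrow\Pi_1$ equivalences supplied by Robinson's test for the model complete $T^*$ are themselves $\Pi_2$-sentences, hence already belong to $(T^*)_{\Pi_2}$. The paper uses this to conclude directly via Robinson's test~\ref{lem:robtest}\ref{lem:robtest-4} that $(T^*)_{\Pi_2}$ is model complete, and then applies the uniqueness clause to both $T^*$ and $(T^*)_{\Pi_2}$; you instead package the same computation as ``any $\mathcal{M}\models(T^*)_{\Pi_2}$ is $T^*$-ec''.

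One small point to tighten: the inference ``$\mathcal{M}$ is $T^*$-ec; Robinson's test then forces $\mathcal{M}\models T^*$'' is not what Robinson's test states (it only gives that models of $T^*$ are $T^*$-ec, not the converse). The cleanest fix is to observe that your own work already establishes Robinson's test~\ref{lem:robtest}\ref{lem:robtest-4} for $(T^*)_{\Pi_2}$, so $(T^*)_{\Pi_2}$ is model complete; since both $\mathcal{M}$ and $\mathcal{N}$ model $(T^*)_{\Pi_2}$, you get $\mathcal{M}\prec\mathcal{N}$ outright and hence $\mathcal{M}\models T^*$. With that adjustment your argument and the paper's coincide.
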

\begin{proof}
For quantifier free formulae $\psi(\vec{x},\vec{y})$ and $\phi(\vec{x},\vec{y})$
the assertion
\[
\forall\vec{x}\,[\exists\vec{y}\psi(\vec{x},\vec{y})\leftrightarrow\forall\vec{y}\phi(\vec{x},\vec{y})]
\]
is a $\Pi_2$-sentence.

Let $T^{**}$ be the theory given by the $\Pi_2$-consequences of $T^*$.

Since $T^*$ is model complete, by Robinson's test \ref{lem:robtest}\ref{lem:robtest-4},  
for any $\Sigma_1$-formula $\exists\vec{y}\psi(\vec{x},\vec{y})$
there is a universal formula $\forall\vec{y}\phi(\vec{x},\vec{y})$ such that
\[
\forall\vec{x}\,[\exists\vec{y}\psi(\vec{x},\vec{y})\leftrightarrow\forall\vec{y}\phi(\vec{x},\vec{y})]
\]
is in $T^{**}$.

Again by Robinson's test \ref{lem:robtest}\ref{lem:robtest-4} $T^{**}$ is model complete.

Now assume $S$ is a model complete theory such that 
$S_\forall=T_\forall$
Clearly $T^*_\forall=T^{**}_\forall=T_\forall$. 
By Robinson's test \ref{lem:robtest}\ref{lem:robtest-2} and Proposition \ref{prop:pi2satKaiserhull}(\ref{rmk:itm0}),
$S_\forall$ holds exactly in the $T_\forall$-ec models.
Hence $T^*=T^{**}$ since $T^*_\forall=T^{**}_\forall$.

This shows that any model complete theory is axiomatized by its $\Pi_2$-consequences, 
that the model companion $T^*$ of $T$ is unique, that $T^*$ is also the Kaiser hull of $T$
(being axiomatized by the $\Pi_2$-sentences which hold in all
$T$-ec-models), and is characterized by the propoerty of being the unique model complete
theory $S$ such that $T_\forall=S_\forall$.
\end{proof}

Thm. \ref{thm:uniqmodcompan} provides an
equivalent characterization of model companion theories
(which is expressible by a $\Delta_0$-property in parameters $T$ and $T^*$, hence absolute for transitive models of $\ZFC$). 

We use the following criteria for model companionship in the proofs of Theorems \ref{Thm:mainthm-3}, \ref{Thm:mainthm-5}, \ref{Thm:mainthm-1}.

\begin{lemma}\label{fac:proofthm1-2}
Let $T,T_0$ be $\tau$-theories 
with $T_0$ model complete.
Assume 
that for every complete $\tau$-theory $S\supseteq T$ there
is $\mathcal{M}$ which models $T_0+S_\forall$.
Then:
\begin{enumerate}
\item \label{fac:proofthm1-2-a}
$T^*=T_0+T_\forall$ is the model companion of $T$.
\item \label{fac:proofthm1-2-b}
$T^*$ is axiomatized by the the set of $\Pi_2$-sentences $\psi$ for $\tau$ such that 
$S_\forall\cup\bp{\psi}$ is consistent for all complete $S\supseteq T$.
\item \label{fac:proofthm1-2-c}
$T^*$ is axiomatized by the the set of $\Pi_2$-sentences $\psi$ for $\tau$ such that for all 
universal $\tau$-sentences $\theta$
$T_\forall+\theta+\psi$ is consistent if and only if so is $T_\forall+\theta$.
\end{enumerate}
\end{lemma}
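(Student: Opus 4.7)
The plan is to verify the three conditions of Definition \ref{def:dmodcompship} for part (a), and then derive (b) and (c) via Theorem \ref{thm:uniqmodcompan}, which identifies the model companion of $T$ with the Kaiser hull of $T_\forall$ and shows it is axiomatized by its $\Pi_2$-consequences.

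For (a), model completeness of $T^*$ follows from model completeness of $T_0$ via Robinson's test \ref{lem:robtest} together with Fact \ref{fac:proofthm1}: every formula is $T_0$-equivalent, hence $T^*$-equivalent, to a universal one. The inclusion $T^* \supseteq T_\forall$ combined with Lemma \ref{lem:keylemembed} ensures every model of $T^*$ extends to a model of $T$. The remaining condition, that every model of $T$ extends to a model of $T^*$, reduces via Lemma \ref{lem:keylemembed} to proving $T^*_\forall \subseteq T_\forall$ (the reverse inclusion being clear); this is the main obstacle.

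I would prove $T^*_\forall \subseteq T_\forall$ by contradiction. Suppose $\psi \in T^*_\forall \setminus T_\forall$; extend $T+\neg\psi$ to a complete theory $S$ and invoke the hypothesis to obtain $\mathcal{M}_S \models T_0+S_\forall$. Since $T_\forall \subseteq S_\forall$, $\mathcal{M}_S$ models $T^* = T_0+T_\forall$ and hence $\psi$. The key step is to show that the $\tau \cup \mathcal{M}_S$-theory consisting of $S$ together with the $\Sigma_1$-diagram of $\mathcal{M}_S$ is consistent: by compactness and completeness of $S$ it suffices that every $\Sigma_1$-sentence $\exists\vec{x}\,\phi(\vec{x})$ witnessed in $\mathcal{M}_S$ belongs to $S$, which follows from $\mathcal{M}_S \models S_\forall$ (whence $\mathcal{M}_S$ embeds in a model of $S$ by Lemma \ref{lem:keylemembed}) and upward preservation of $\Sigma_1$-sentences. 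A model $\mathcal{N}^*$ of this theory satisfies $\mathcal{M}_S \prec_1 \mathcal{N}^* \models S$, so $\mathcal{N}^* \models \neg\psi$. The Resurrection Lemma \ref{cor.resurrlemma} then produces $\mathcal{Q} \sqsupseteq \mathcal{N}^*$ with $\mathcal{M}_S \prec \mathcal{Q}$; elementarity forces $\mathcal{Q} \models \psi$, while upward preservation of $\Sigma_1$-sentences from $\mathcal{N}^*$ forces $\mathcal{Q} \models \neg\psi$, the desired contradiction.

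For (b), Theorem \ref{thm:uniqmodcompan} reduces the task to identifying the $\Pi_2$-part of $T^*$ with the set $A$ of (b). The inclusion $A \subseteq T^*$ is Fact \ref{fac:charkaihullnonpi1comp} applied to $T^*$ viewed as the Kaiser hull of $T$. Conversely, for a $\Pi_2$-consequence $\psi$ of $T^*$ and a complete $S \supseteq T$, the hypothesis supplies $\mathcal{M}_S \models T_0+S_\forall = T^*$, so $\mathcal{M}_S \models \psi$ and $S_\forall+\psi$ is consistent. For (c), the set $B$ axiomatizes $T^*$ as well: the inclusion $B \subseteq T^*$ follows from a compactness argument paired with Proposition \ref{prop:pi2satKaiserhull}(4), noting that for any $T$-ec model $\mathcal{M}$ and finite $\Theta \subseteq \mathrm{Th}(\mathcal{M})_\forall$ the conjunction $\theta = \bigwedge\Theta$ is universal with $T_\forall+\theta$ consistent (as witnessed by $\mathcal{M}$), so $\psi \in B$ renders $\psi$ consistent with $\mathrm{Th}(\mathcal{M})_\forall$, whence $\mathcal{M} \models \psi$. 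The reverse direction, that every $\Pi_2$-consequence of $T^*$ lies in $B$, I expect to follow by extending a model of $T_\forall+\theta$ to a complete $S \supseteq T$ compatible with $\theta$ (via a $\Sigma_1$-diagram resurrection argument analogous to the one in (a)) and then applying the hypothesis to produce $\mathcal{M}_S \models T_\forall+\theta+\psi$.
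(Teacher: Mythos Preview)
Your argument for part (a) has a genuine gap at the step where you claim that a model $\mathcal{N}^*$ of $S$ together with the $\Sigma_1$-diagram of $\mathcal{M}_S$ satisfies $\mathcal{M}_S\prec_1\mathcal{N}^*$. Consistency of $S$ with the $\Sigma_1$-diagram only yields $\mathcal{M}_S\sqsubseteq\mathcal{N}^*$ (upward preservation of $\Sigma_1$-formulae is automatic for substructures); to obtain $\prec_1$ you would need the $\Pi_1$-diagram of $\mathcal{M}_S$ as well. But that theory is \emph{inconsistent} in your setup: the universal sentence $\psi$ lies in the $\Pi_1$-diagram of $\mathcal{M}_S$ (since $\mathcal{M}_S\models T^*\vdash\psi$), while $\neg\psi\in S$. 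So no $\mathcal{N}^*\models S$ with $\mathcal{M}_S\prec_1\mathcal{N}^*$ can exist, and the Resurrection Lemma never gets off the ground.

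Your overall strategy is salvageable, and in fact the contradiction comes much more cheaply. Let $S^*$ be the complete theory of $\mathcal{M}_S$. Since $\mathcal{M}_S\models S_\forall$ we have $S_\forall\subseteq S^*_\forall$; since $S$ is complete, $S_\forall$ is $\Pi_1$-complete, hence $S^*_\forall=S_\forall$. But $\psi$ is $\Pi_1$ and $\mathcal{M}_S\models\psi$, so $\psi\in S^*_\forall=S_\forall\subseteq S$, contradicting $\neg\psi\in S$. This is precisely the observation the paper exploits (it uses $S^*_\forall=S_\forall$ to show directly that $S^*\cup\Delta_0(\mathcal{M})$ is consistent for any $\mathcal{M}\models T$, hence $\mathcal{M}$ embeds in a model of $S^*\supseteq T^*$). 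Your treatment of (b) and the sketch for (c) are fine and, for (b), more explicit than the paper's.
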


\begin{proof}
By Fact \ref{fac:proofthm1} $T^*$ is model complete. 
\begin{enumerate}
\item
We need to show that any model of $T^*$ embeds into a model of $T$ and conversely.

Assume $\mathcal{N}$ models $T^*$.
Then $\mathcal{N}$ models $T_\forall$. By Lemma \ref{lem:keylemembed}
there exists $\mathcal{M}\sqsupseteq \mathcal{N}$
which models $T$.

Conversely let $\mathcal{M}$ model $T$ and
$S$ be the $\tau$-theory of $\mathcal{M}$.
By 
assumption there is $\mathcal{N}$ which models $T_0+S_\forall$
(but this $\mathcal{N}$ may not be a superstructure of $\mathcal{M}$).
Let $S^*$ be the $\tau$-theory of $\mathcal{N}$.
Then $S^*_\forall=S_\forall$, since $S_\forall$ and $S^*_\forall$ 
are $\Pi_1$-complete theories with $S^*_\forall\supseteq S_\forall$.
Moreover $S^*\supseteq T^*$, since $S_\forall\supseteq T_\forall$.

\begin{claim}
The $\tau\cup\mathcal{M}$-theory 
$S^*\cup\Delta_0(\mathcal{M})$
is consistent. 
\end{claim}

Assume the Claim holds, then $\mathcal{M}$ is a $\tau$-substructure of a model of 
$S^*\supseteq T^*$ and we are done.

\begin{proof}
If not there is $\psi(\vec{a})\in \Delta_0(\mathcal{M})$ such that 
$S^*\cup\bp{\psi(\vec{a})}$ is inconsistent.
This gives that
\[
S^*\vdash \neg \psi(\vec{a}).
\]
Since none of the constant in $\vec{a}$ occurs in $\tau$, we get that
\[
S^*\vdash \forall\vec{x}\neg \psi(\vec{x}),
\]
i.e. $\forall\vec{x}\neg \psi(\vec{x})\in S^*_\forall=S_\forall$.
But $\mathcal{M}$  models $S_\forall$ and
$\forall\vec{x}\neg \psi(\vec{x})$ fails in $\mathcal{M}$; a contradiction.
\end{proof}
\item
Assume $\psi\in T^*$ and $S$ is a complete extension of $T$, 
we must show that
$S_\forall+\psi$ is consistent.
By assumption there is $\mathcal{N}$ which models
$T^*+S_\forall$, and we are done.
\item Left to the reader.
\end{enumerate}
\end{proof}

\begin{remark}\label{rmk:keyrmkcharkaihull}
We do not know whether the characterization of the model companion of $T$ given in 
Lemma~\ref{fac:proofthm1-2}(\ref{fac:proofthm1-2-c})
can be proved for \emph{all} theories $T$ admitting a model companion: following the notation of 
the Lemma, it is conceivable that some $\tau$-theory $T$ has a model companion $T^*$ and there is 
some some univesal $\tau$-sentence $\theta$ such that for any 
model $\mathcal{M}$ of $T_\forall+\theta$ any superstructure of $\mathcal{M}$ which models
$T^*$ kills the truth of $\theta$.
In this case no $\Pi_2$-sentence in the Kaiser
hull of $T$ is consistent with the universal fragment of $T_\forall+\theta$.
\end{remark}

\subsection{Is model companionship a tameness notion?}\label{subsec:tameness-modcompan}

Model completeness and model companionship are ``tameness'' notion for first order theories
which must be handled with care.

\begin{proposition}\label{prop:quantelimallthe}
Given a signature $\tau$ consider the signature $\tau^*$ which adds an $n$-ary predicate
symbol $R_\phi$ for any $\tau$-formula $\phi(x_1,\dots,x_n)$ with displayed free variables.

Let $T_{\tau}$ be the following $\tau^*$-theory:
\begin{itemize}
\item
$\forall\vec x\,(\phi(\vec{x})\leftrightarrow R_\phi(\vec{x}))$ for all quantifier free $\tau$-formulae $\phi(\vec{x})$,
\item
$\forall\vec x\,[R_{\phi\wedge\psi}(\vec{x})\leftrightarrow (R_\psi(\vec{x})\wedge R_\phi(\vec{x}))]$
for all $\tau$-formulae $\phi(\vec{x}),\psi(\vec{x})$,
\item
$\forall\vec x\,[R_{\neg\phi}(\vec{x})\leftrightarrow \neg R_\phi(\vec{x})]$
for all $\tau$-formulae $\phi(\vec{x})$,
\item
$\forall\vec x\,[\exists yR_{\phi}(y,\vec{x})\leftrightarrow R_{\exists y\phi}(\vec{x})]$
for all $\tau$-formulae $\phi(y,\vec{x})$.
\end{itemize}

Then any $\tau$-structure $\mathcal{N}$ admits a unique extension to 
a $\tau^*$-structure $\mathcal{N}^*$ which models $T_\tau$.
Moreover every $\tau^*$-formula is $T_\tau$-equivalent to 
an atomic $\tau^*$-formula. In particular for any $\tau$-model $\mathcal{N}$, the algebras of
its $\tau$-definable subsets and of the $\tau^*$-definable subsets of $\mathcal{N}^*$ are the same.

Therefore for any consistent $\tau$-theory $T$, $T\cup T_{\tau}$ is consistent and 
admits quantifier elemination, hence
is model complete.
\end{proposition}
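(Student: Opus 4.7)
The plan is to verify the three claims of the proposition in order by straightforward inductions on formula complexity, then deduce model completeness from Robinson's test. The whole argument is a bookkeeping exercise tracking how the four axiom schemes of $T_\tau$ control the interpretations of the new predicates $R_\phi$.

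For the first claim, given a $\tau$-structure $\mathcal{N}$, I would define $R_\phi^{\mathcal{N}^*}$ to be the extension $\phi(\mathcal{N})$ of $\phi(\vec{x})$ in $\mathcal{N}$ and verify the four axiom schemes directly: the first scheme holds by Tarski's definition of satisfaction for quantifier-free formulae, and the remaining three are exactly the recursive clauses of the same definition for conjunction, negation, and existential quantification. Uniqueness of the extension follows by induction on the $\tau$-formula $\phi$: the first axiom pins down $R_\phi^{\mathcal{N}^*}$ for quantifier-free $\phi$, and the remaining three schemes propagate the characterization through the connectives and the quantifier.

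For the second claim, I would induct on the syntactic construction of the $\tau^*$-formula $\chi$. An atomic $R_\phi(\vec{x})$ is already of the desired form; an atomic $\tau$-formula $\phi(\vec{x})$ is $T_\tau$-equivalent to $R_\phi(\vec{x})$ by the first axiom scheme. In the inductive step, assuming $\chi_1$ and $\chi_2$ are $T_\tau$-equivalent to $R_{\phi_1}$ and $R_{\phi_2}$ with $\phi_1,\phi_2$ $\tau$-formulae, the second axiom scheme gives $\chi_1\wedge\chi_2\equiv R_{\phi_1\wedge\phi_2}$; negation and existential quantification are disposed of by the third and fourth schemes. Since $\tau$-formulae are closed under these syntactic operations, the induction indeed produces an $R_\phi$ with $\phi\in\tau$, which is the only mild point of care in the argument.

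The remaining conclusions follow quickly. The algebras of $\tau$-definable and $\tau^*$-definable subsets of $\mathcal{N}^*$ coincide because every $\tau^*$-formula defines the same set as some $R_\phi$, whose interpretation is $\phi(\mathcal{N})$ by the first claim. For a consistent $\tau$-theory $T$, any $\tau$-model of $T$ extends uniquely to a $\tau^*$-model of $T\cup T_\tau$, establishing consistency. Since every $\tau^*$-formula is $T_\tau$-equivalent to an atomic (hence quantifier-free) one, the theory $T\cup T_\tau$ admits quantifier elimination; model completeness is then immediate from Robinson's test (Lemma \ref{lem:robtest}, clause \ref{lem:robtest-3}). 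I expect no serious obstacle in executing this plan.
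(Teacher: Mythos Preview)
Your proposal is correct and follows essentially the same approach as the paper: both arguments are straightforward inductions on formula complexity using the four axiom schemes of $T_\tau$. The only cosmetic difference is that the paper factors the quantifier-elimination step into two separate inductions (first showing every $\tau$-formula $\phi$ is $T_\tau$-equivalent to $R_\phi$, then showing every $\tau^*$-formula is $T_\tau$-equivalent to a $\tau$-formula by replacing each $R_\psi$ with $\psi$, and finally composing), whereas you carry out a single direct induction on $\tau^*$-formulae; this is a matter of organization rather than substance.
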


\begin{proof}
By an easy induction one can prove that any $\tau$-formula $\phi(\vec{x})$
is $T_\tau$-equivalent to the atomic $\tau^*$-formula $R_{\phi}(\vec{x})$.

Another simple inductive argument brings that any
$\tau^*$-formula $\phi(\vec{x})$ is $T_\tau$-equivalent to the 
$\tau$-formula obtained by replacing all symbols $R_\psi(\vec{x})$ occurring in
$\phi$ by the $\tau$-formula $\psi(\vec{x})$. Combining these observations together
we get that any $\tau^*$-formula is equivalent to an atomic 
$\tau^*$-formula. 

$T_{\tau}$ forces the $\mathcal{M}^*$-interpretation of any relation symbol $R_\phi(\vec{x})$
in $\tau^*\setminus\tau$ to
be the $\mathcal{M}$-interpretation of the $\tau$-formula $\phi(\vec{x})$ to which it is $T_{\tau}$-equivalent.
\end{proof}

Observe that the expansion of the language from $\tau$ to $\tau^*$ 
behaves well with respect to several model theoretic notions of tameness distinct 
from model completeness: for example $T$ is a \emph{stable} $\tau$-theory if and only if 
so is the $\tau^*$-theory $T\cup T_{\tau}$, the same holds for 
NIP-theories, or for $o$-minimal theories, or for $\kappa$-categorical theories.

The passage from $\tau$-structures to $\tau^*$-structures which model 
$T_{\tau}$ can have effects
on the embeddability relation; for example assume $\mathcal{M}\sqsubseteq\mathcal{N}$ is a non-elementary embedding of 
$\tau$-structures; then $\mathcal{M}^*\not\sqsubseteq\mathcal{N}^*$: if the non-atomic $\tau$-formula
$\phi(\vec{a})$ in parameter $\vec{a}\in \mathcal{M}^{<\omega}$ 
holds in $\mathcal{M}$ and does not hold in
$\mathcal{N}$, the atomic $\tau^*$-formula $R_\phi(\vec{a})$ holds in $\mathcal{M}^*$ and does not hold in
$\mathcal{N}^*$.

However if $T$ is a model complete $\tau$-theory, then for 
$\mathcal{M}\sqsubseteq\mathcal{N}$ $\tau$-models of $T$, we get that
$\mathcal{M}\prec\mathcal{N}$; this entails that $\mathcal{M}^*\sqsubseteq\mathcal{N}^*$, which (by the quantifier
elimination of $T\cup T_{\tau}$) gives that $\mathcal{M}^*\prec\mathcal{N}^*$. 
In particular for a model complete $\tau$-theory $T$ and $\mathcal{M},\mathcal{N}$ 
$\tau$-models of $T$,
$\mathcal{M}\sqsubseteq\mathcal{N}$ if and only if 
$\mathcal{M}^*\sqsubseteq\mathcal{N}^*$.

Let us now investigate the case of model companionship.
If $T$ is the model companion of $S$ with $S\neq T$ in the signature $\tau$, 
$T\cup T_\tau$ and $S\cup T_\tau$ are both model complete theories in the signature $\tau^*$. 
But $T\cup T_\tau$ cannot be the model companion of $S\cup T_\tau$, by uniqueness of the model companion,
since each of these theories is the model companion of itself and they are distinct.
Moreover if $T$ and $S$ are also complete, no $\tau^*$-model of $S\cup T_\tau$ can embed into a 
$\tau^*$-model of $T\cup T_\tau$:
since $T$ is the model companion of 
$S$ and $S\neq T$, $T_\forall=S_\forall$ and there is some $\Pi_2$-sentence $\psi$
$\forall x\exists y\phi(x,y)$ with $\phi$-quantifer free in $T\setminus S$.
Therefore $\forall x\,R_{\exists y\phi}(x) \in (T\cup T_\tau)_\forall \setminus (S\cup T_\tau)_\forall$; 
we conclude by Lemma \ref{lem:biembequivequnivth}, since $T\cup T_\tau$ and $S\cup T_\tau$ are complete, 
hence the above sentence separates $(T\cup T_\tau)_\forall$ from
$(S\cup T_\tau)_\forall$.

\subsection{Summing up}
The results of this section gives that for any $\tau$-theory $T$:
\begin{itemize}
\item The universal fragment of $T$ describes the family of substructures
of models of $T$, and the $T$-ec models realize all $\Pi_2$-sentences which are ``absolutely'' consistent with 
$T_\forall$ 
(i.e. consistent with the universal fragment of any extension of $T$).
\item Model companionship and model completeness describe (almost all) the cases 
in which the family of $\Pi_2$-sentences which are ``absolutely'' consistent with $T$ (as defined in the previous item)
describes the elementary class given by the $T$-ec structures.
\item One can always extend $\tau$ to a signature $\tau^*$ so that $T$ has a 
conservative extension to a $\tau^*$-theory $T^*$ which is model complete, but this process may be completely 
uninformative since it may completely destroy the
substructure relation existing between $\tau$-models of $T$ (unless $T$ is already model complete).
\item On the other hand 
for certain theories $T$ (as the axiomatizations of set theory 
considered in the present paper),
 one can unfold their ``tameness'' 
by carefully extending $\tau$ to a signature $\tau^*$ in which only certain $\tau$-formulae 
are made equivalent to atomic $\tau^*$-formulae. In the new signature $T$ can be extended to a conservative extension $T^*$ which has a model companion $\bar{T}$, while
this process has mild consequences on the $\tau^*$-substructure relation for models of $T^*_\forall$ 
(i.e. for the pairs of interest of $\tau$-models $\mathcal{M}_0\sqsubseteq\mathcal{M}_1$ of a suitable fragment of $T$, 
their unique extensions to $\tau^*$-models $\mathcal{M}^*_i$ are still models of $T^*_\forall$
and maintain that $\mathcal{M}^*_0\sqsubseteq\mathcal{M}^*_1$ also for $\tau^*$).
This gives useful structural information on the web of relations existing between $\tau^*$-models of $T^*_\forall$
(as outlined by Theorems \ref{Thm:mainthm-3}, \ref{Thm:mainthm-5},
\ref{Thm:mainthm-1}).
\item
Our conclusion is that model completeness and model companionship are tameness properties of elementary classes 
$\mathcal{E}$ defined by a theory $T$ rather than of the theory $T$ itself: 
these model-theoretic notions outline certain regularity patterns for the substructure relation on
models of $\mathcal{E}$, patterns which may be unfolded only when passing to a signature distinct 
from the one in which
$\mathcal{E}$ is first axiomatized (much the same way as it occurs for Birkhoff's 
characterization of algebraic varieties in terms of universal theories).

%
\item The results of the present paper shows that if we consider set theory together with large cardinal axioms as formalized in the signature $\sigma_\omega,\sigma_{\omega,\NS_{\omega_1}},\sigma_{\omega_1}$, we obtain (until now unexpected) tameness properties for this first order theory, properties 
which couple perfectly with well 
known (or at least published) generic absoluteness results.
We do not have an abstract model theoretic justification for selecting these signatures out of the 
continuum many
signatures which produce definable extensions of $\ZFC$. 
However the common practice of set theory (independently of our results) already 
motivate our choice, and our results validate it.
\end{itemize}
\section{Auxiliary results}\label{sec:auxres}

We collect here auxiliary results needed to prove
Theorems \ref{Thm:mainthm-3} and \ref{Thm:mainthm-1}.
We prove all these results working in ``standard''
models of $\ZFC$, i.e. we assume the models are well-founded. This is a practice we already adopted
in Section \ref{sec:Hkappa+}. We leave to the reader to remove this unnecessary assumption.

\subsection{Generalizations of Levy absoluteness}\label{subsec:genlevabs}

We start with a natural generalization of Levy's absoluteness we used in the proof of Thm. \ref{Thm:mainthm-1}.

\begin{lemma}\label{lem:levyabsHkappa+}
Let $\kappa$ be an infinite  cardinal
and $\mathcal{A}$ be any family of subsets of 
$\bigcup_{n\in\omega}\pow{\kappa}^n$.
Let $\tau_{\mathcal{A}}=\tau_{\ST}\cup\mathcal{A}$.

Then:
\[
(H_{\kappa^+}^V,\tau_{\mathcal{A}}^V)\prec_{\Sigma_1}
(V,\tau_{\mathcal{A}}^{V}).
\]
\end{lemma}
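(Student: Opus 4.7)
The plan is to verify the Tarski--Vaught test for $\Sigma_1$-elementarity via a standard Löwenheim--Skolem plus Mostowski-collapse argument, adapted so that the atomic $\mathcal{A}$-predicates are preserved by the collapse. So I would fix a $\Delta_0$-formula $\chi(y,\vec x)$ for $\tau_{\mathcal{A}}$ and parameters $\vec a\in H_{\kappa^+}^V$, suppose $V\models\exists y\,\chi(y,\vec a)$ is witnessed by some $b\in V$, and produce $\bar c\in H_{\kappa^+}^V$ with $V\models\chi(\bar c,\vec a)$. Let $S_1,\dots,S_k\in\mathcal{A}$ be the finitely many predicates occurring in $\chi$; each $S_i\subseteq\pow{\kappa}^{n_i}\subseteq H_{\kappa^+}^V$ has transitive closure of size at most $2^\kappa$, so $S_1,\dots,S_k,\vec a,b\in H_\lambda^V$ for any sufficiently large regular $\lambda$. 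For such a $\lambda$, $\Delta_0$-absoluteness of $\chi$ between the transitive sets $V$ and $H_\lambda^V$ gives $H_\lambda^V\models\chi(b,\vec a)$.

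Next I would apply downward Löwenheim--Skolem to $(H_\lambda^V,\tau_{\mathcal{A}}^V)$ to obtain an elementary substructure $M$ of cardinality $\kappa$ with
\[
\kappa\cup\trcl(\{\vec a\})\cup\{\vec a,b\}\subseteq M,
\]
let $\pi:M\to\bar{M}$ be its Mostowski collapse, and set $\bar c=\pi(b)$. The crucial observation will be that, thanks to $\kappa\subseteq M$, $\pi$ fixes every ordinal below $\kappa$, hence $\kappa$ itself and all of $\trcl(\{\vec a\})$ (so in particular $\pi(\vec a)=\vec a$), and consequently fixes every subset of $\kappa$ lying in $M$: for $x\subseteq\kappa$ with $x\in M$ one has $x\cap M=x$, and $\pi(x)=\{\pi(y):y\in x\}=x$ by induction. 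Since every tuple in $S_i\cap M$ is an $n_i$-tuple of subsets of $\kappa$, $\pi$ fixes it pointwise, yielding
\[
S_i^{\bar{M}}=\pi[S_i\cap M]=S_i\cap M=S_i^V\cap\bar{M}.
\]
Thus the interpretations of the $\mathcal{A}$-predicates in $\bar{M}$ coincide on $\bar{M}$ with their restrictions from $V$.

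From here the conclusion follows by standard absoluteness: the $\tau_{\ST}$-atomic predicates and function symbols are defined by $\Delta_0$ $\in$-formulae and hence are absolute between the transitive set $\bar{M}$ and $V$; combined with the agreement of the $\mathcal{A}$-predicates just established, a routine induction on the complexity of $\Delta_0$-formulae for $\tau_{\mathcal{A}}$ shows that $\chi$ is absolute between $(\bar{M},\tau_{\mathcal{A}}^V)$ and $(V,\tau_{\mathcal{A}}^V)$ on arguments from $\bar{M}$. Elementarity of $M$ together with $H_\lambda^V\models\chi(b,\vec a)$ and $b\in M$ gives $M\models\chi(b,\vec a)$, hence $\bar{M}\models\chi(\bar c,\vec a)$, hence $V\models\chi(\bar c,\vec a)$. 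Since $\bar{M}$ is transitive of cardinality $\kappa$, $\bar c\in H_{\kappa^+}^V$, which is exactly what the Tarski--Vaught criterion demands.

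The only genuine obstacle in going beyond the classical Levy absoluteness theorem is ensuring that the Mostowski collapse does not perturb the arguments of the $\mathcal{A}$-atomic formulae. This is precisely what forces the constraint $\kappa\subseteq M$: once that is arranged, subsets of $\kappa$---and hence tuples from $\pow{\kappa}^{n_i}$---in $M$ are fixed pointwise by $\pi$, so the atomic $\mathcal{A}$-formulae retain their $V$-truth values in $\bar{M}$. Without it the base case of the $\Delta_0$-absoluteness induction would fail and the whole argument would collapse.
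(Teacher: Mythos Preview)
Your argument is correct and follows essentially the same strategy as the paper's proof: take a size-$\kappa$ elementary hull containing $\kappa$ as a subset, Mostowski-collapse it, and use that subsets of $\kappa$ (hence tuples from the $\mathcal{A}$-predicates) are fixed by the collapse. The paper works inside some $V_\alpha$ rather than $H_\lambda$ and places the sets $A_i$ as \emph{elements} of the hull (deriving $\pi_X(A_i)=A_i\cap M$) rather than treating them purely as predicates, but these differences are cosmetic; the one small point to tighten is that you should explicitly put $\kappa\in M$ (not just $\kappa\subseteq M$), since the equality $S_i^{\bar M}=S_i^V\cap\bar M^{n_i}$ requires the reverse inclusion $\bar M\cap\pow{\kappa}\subseteq M$, which in turn relies on $\pi(\kappa)=\kappa$.
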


\begin{proof}
Assume for some $\tau_{\mathcal{A}}$-formula
$\phi(\vec{x},y)$ without quantifiers\footnote{A quantifier free $\tau_{A_1,\dots,A_k}$-formula is a boolean
combination of atomic $\tau_\ST$-formulae with formulae of type $A_j(\vec{x})$.
For example $\exists x\in y A(y)$ is not a quantifier free $\tau_{\ST}$-formula, and is actually equivalent to the 
$\Sigma_1$-formula
$\exists x(x\in y)\wedge A(y)$.}
and 
$\vec{a}\in H_{\kappa^+}$
\[
(V,\tau_{\mathcal{A}}^{V})\models\exists y\phi(\vec{a},y).
\]
Let $\alpha>\kappa$ be large enough  so that for some $b\in V_\alpha$
\[
(V,\tau_{\mathcal{A}}^{V})\models\phi(\vec{a},b).
\]
Then
\[
(V_\alpha,\tau_{\mathcal{A}}^{V})\models\phi(\vec{a},b).
\]
Let $A_1,\dots,A_k$ be the subsets of $\pow{\kappa}^{i_k}$ which are the predicates mentioned in $\phi$. 
By the downward Lowenheim-Skolem theorem, we  can find
$X\subseteq V_\alpha$ which is the domain of a $\tau_{A_1,\dots,A_k}$-elementary substructure of
\[
(V_\alpha,\tau_{\ST},A_1,\dots,A_k)
\]
such that $X$ is a set of size $\kappa$ containing $\kappa$ and such that
$A_1,\dots,A_k,\kappa,b,\vec{a}\in X$. 
Since $|X|=\kappa\subseteq X$, a standard argument shows that 
$H_{\kappa^+}\cap X$ is a transitive set, and that $\kappa^+$ is the least ordinal in
$X$ which is not contained in $X$.
Let $M$ be the transitive collapse of $X$ via the Mostowski collapsing map $\pi_X$.

We have that
the first ordinal moved by $\pi_X$ is $\kappa^+$ and
$\pi_X$ is the identity on $H_{\kappa^+}\cap X$. Therefore $\pi_X(a)=a$
for all 
$a \in H_{\kappa^+}\cap X$.
Moreover for $A\subseteq \pow{\kappa}^n$ in $X$
\begin{equation}\label{eqn:piXidonpowkappa}
\pi_X(A)=A\cap M.
\end{equation}
We prove equation (\ref{eqn:piXidonpowkappa}):
\begin{proof}
Since 
$X\cap V_{\kappa+1}\subseteq X\cap H_{\kappa^+}$,
$\pi_X$ is 
the identity on $X\cap H_{\kappa^+}$, and
$A\subseteq V_{\kappa+1}$,
we get that
\[
\pi_X(A)=\pi_X[A\cap X]=\pi_X[A\cap X\cap V_{\kappa+1}]=A\cap M\cap V_{\kappa+1}=A\cap M.
\]
\end{proof}
It suffices now to show that
\begin{equation}\label{eqn:keyeqlevabs}
(M,\tau_{\ST}^V,\pi_X(A_1),\dots,\pi_X(A_k))\sqsubseteq (H_{\kappa^+},\tau_{\ST}^V,A_1,\dots,A_k).
\end{equation}
Assume \ref{eqn:keyeqlevabs} holds; since $\pi_X$ is an isomorphism and $\pi_X(A_j)=\pi_X[A_j\cap X]$, we
get that 
\[
(M,\tau_{\ST}^V,\pi_X(A_1),\dots,\pi_X(A_k))\models\phi(\pi_X(b),\vec{a})
\]
since 
\[
(X,\tau_{\ST}^V,A_1\cap X,\dots,A_k\cap X)\models\phi(b,\vec{a}).
\]
By (\ref{eqn:keyeqlevabs}) we get that 
\[
(H_{\kappa^+},\tau_{\ST}^V,\pi_X(A_1),\dots,\pi_X(A_k))\models\phi(\pi_X(b),\vec{a})
\]
and we are done.

We prove (\ref{eqn:keyeqlevabs}):
since $M$ is transitive, any atomic $\tau_\ST$-formula (i.e. any $\Delta_0$-property)
holds true in $M$ if and if it holds in $H_{\kappa^+}$.
It remains to argue that the same occurs for the $\tau_{\mathcal{A}}$-formulae of type $A_j(x)$, i.e. that
$A_j\cap M=\pi_X(A_j)$ for all $j=1,\dots,n$; which is the case
by (\ref{eqn:piXidonpowkappa}).
\end{proof}

\begin{remark}
Key to the proof is the fact that subsets of $\kappa$
have bounded rank below $\kappa^+$.
If $A\subseteq H_{\kappa^+}$ has elements of 
unbounded rank, the equality $\pi_X(A)=A\cap M$ 
may fail: for example
if $A=H_{\kappa^+}$, $\pi_X(A)=H_{\kappa^+}\cap X$
while $A\cap M=M$.
This shows that \ref{eqn:keyeqlevabs} fails for this choice 
of $A$.
\end{remark}

\subsection{Universally Baire sets and generic absoluteness for second order number theory}\label{sec:genabssecordnumth}

We collect here the properties of universally Baire sets and the generic absoluteness results for second order 
number theory we need to prove Thm. \ref{Thm:mainthm-3}.

\begin{notation}
$\mathcal{A}\subseteq \bigcup_{n\in\omega}\pow{\kappa}^n$ is projectively closed
if it is closed under projections, finite unions, complementation, and permutations
(if $\sigma:n\to n$ is a permutation and $A\subseteq\pow{\kappa}^n$, 
$\hat{\sigma}[A]=\bp{(a_{\sigma(0)},\dots,a_{\sigma(n-1}):\, (a_0,\dots,a_{n-1})\in A}$).

Otherwise said, $\mathcal{A}$ 
is the class of lightface definable subsets of some signature on 
$\pow{\kappa}$.
\end{notation}

\subsection{Universally Baire sets}\label{subsec:univbaire}
Assuming large cardinals 
there is a very large sample of projectively closed families of subsets of $\pow{\omega}$ which are are ``simple'', 
hence it is natural to consider elements of these families
as atomic predicates. 

The exact definition of what is meant by a ``simple'' subset of $2^\omega$ 
is captured by the notion of universally Baire set.

Given a topological space $(X,\tau)$, $A\subseteq X$ is nowhere dense if its closure 
has a dense complement,
meager if it is the countable union of nowhere dense sets, with the Baire property if it 
has meager symmetric difference with
an open set.
Recall that $(X,\tau)$ is Polish if $\tau$ is a completely metrizable, separable topology on $X$.

\begin{definition}
(Feng, Magidor, Woodin) 
Given a Polish space $(X,\tau)$, $A\subseteq X$ is \emph{universally Baire} 
if for every compact Hausdorff space $(Y,\sigma)$ and
every continuous $f:Y\to X$ we have that $f^{-1}[A]$ has the Baire property in $Y$.

$\bool{UB}$ denotes the family of universally Baire subsets of $X$ for some Polish space $X$.
\end{definition}

We adopt the convention that $\mathsf{UB}$ denotes the class of universally Baire sets and of all elements of 
$\bigcup_{n\in\omega+1}(2^{\omega})^n$ (since the singleton of such elements are universally Baire sets).


The theorem below outlines three simple examples of projectively closed families of universally Baire sets
containing $2^\omega$.
\begin{theorem}\label{thm:UBsetsgenabs}
Let $T_0$ be the $\tau_\ST$-theory $\mathsf{ZFC^*}+$\emph{there are infinitely many 
Woodin cardinals and a measurable above}
and $T_1$ be the $\tau_\ST$-theory $\mathsf{ZFC^*}+$\emph{there are class many Woodin cardinals}.
\begin{enumerate}
\item \cite[Thm. 3.1.12, Thm. 3.1.19]{STATLARSON}
Assume $V$ models $T_0$. Then every projective subset of $2^\omega$ is universally Baire.
\item \cite[Thm. 3.3.3, Thm. 3.3.5, Thm. 3.3.6, Thm. 3.3.8, Thm. 3.3.13, Thm. 3.3.14]{STATLARSON}
Assume $V\models T_1$.
Then $\mathsf{UB}$ is projectively closed.
\end{enumerate}
\end{theorem}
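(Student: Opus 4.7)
The plan is to use the tree-theoretic characterization of universally Baire sets due to Feng--Magidor--Woodin: a set $A\subseteq 2^\omega$ is universally Baire if and only if for every cardinal $\kappa$ there exist trees $T,S$ on $\omega\times\lambda$ with $p[T]=A$ and $p[S]=2^\omega\setminus A$, this absolute complementation persisting in every generic extension by posets of size less than $\kappa$. Both parts of the theorem then reduce to constructing absolutely complementing tree pairs under the stated large cardinal assumptions.

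For part (1) I would proceed by induction on the projective rank. The base case uses Shoenfield's absolute Suslin tree for $\Sigma^1_1$-sets. The measurable cardinal sitting above the Woodins yields, via the Martin--Solovay construction, absolutely complementing trees for $\Pi^1_2$-sets. The inductive step from level $n$ to level $n+1$ consumes one Woodin cardinal: given trees witnessing universal Baireness for the $\Sigma^1_n$-sets, a Martin--Solovay-style construction performed with a Woodin below the measurable produces absolutely complementing trees for $\Pi^1_{n+1}$-sets, and complementation then delivers $\Sigma^1_{n+1}$-UB. Since $T_0$ furnishes infinitely many Woodins beneath the measurable, the induction proceeds through every finite projective level.

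For part (2), closure under complementation is immediate by swapping $T$ and $S$; closure under permutations is trivial reindexing; closure under finite unions is a routine combination of tree pairs (disjoint-union tree on the positive side, product-intersection tree on the negative side). The substantive content is closure under existential projection. Given $A\subseteq(2^\omega)^{n+1}$ universally Baire with tree pair $(T,S)$ witnessing $\kappa$-UB, the projection $\pi[A]$ is Suslin via $T$, but constructing an absolutely complementing tree for its complement requires a Woodin cardinal $\delta$ above $\kappa$ together with Woodin's tree-production machinery (generic ultrapowers obtained from the stationary tower forcing up to $\delta$) to manufacture a tree that remains a valid complement in every generic extension of size less than $\delta$. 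Since $T_1$ supplies Woodins above every cardinal, one gets witnesses to $\kappa$-UB of the projection for every $\kappa$ simultaneously.

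The main obstacle in both parts is the Woodin-cardinal step that certifies a candidate tree actually complements the target set in all sufficiently small forcing extensions; everything else is combinatorial bookkeeping on trees. The extra delicacy in (2) is that the tree production must be carried out uniformly in the parameter $\kappa$ measuring the desired degree of absoluteness, so that one obtains a single universally Baire set rather than merely a family of $\kappa$-UB approximations indexed by $\kappa$; this uniformity is exactly what ``class many'' (rather than ``infinitely many'') Woodins buys us.
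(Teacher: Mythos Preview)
The paper does not give its own proof of this theorem: it is stated as a quotation of results from Larson's monograph on the stationary tower (the items carry explicit citations to \cite[Thm.~3.1.12, 3.1.19]{STATLARSON} and \cite[Thm.~3.3.3--3.3.14]{STATLARSON}), and the text immediately moves on without any argument. So there is nothing in the paper to compare your proposal against.

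That said, your outline is a faithful summary of how those cited proofs actually proceed: the Feng--Magidor--Woodin tree characterization, the Martin--Solovay tree construction propagating universal Baireness up the projective hierarchy one Woodin at a time (with the measurable on top supplying the seed), and Woodin's stationary-tower tree-production lemma for the projection step in part~(2). One minor point: in part~(1) the standard presentation does not literally phrase the base case as Shoenfield for $\Sigma^1_1$ and then the measurable for $\Pi^1_2$; rather, the Martin--Solovay construction is run uniformly through the Woodin cardinals with the measurable providing the homogeneity input at the start, but this is a matter of packaging rather than substance. Your sketch would serve as a reasonable roadmap for a reader who wants to reconstruct the arguments from the cited source.
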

%

To proceed further we now list the standard facts about universally Baire sets we will need:
 
 \begin{enumerate}
 \item\label{itm1:charUBsets} \cite[Thm. 32.22]{JECHST}
 $A\subseteq 2^{\omega}$ is universally Baire if and only if for each forcing notion $P$ there are 
 trees $T_A,S_A$ on $\omega\times\delta$ for some $\delta> |P|$
 such that $A=p[[T_A]]$ (where $p:(2\times\kappa)^\omega\to 2^{\omega}$ denotes the projection on the first component and $[T]$ denotes the body of the tree $T$), and
 \[
P\Vdash T_A\text{ and }S_A\text{ project to complements},
\]
by this meaning that for all $G$ $V$-generic for $P$
\[
V[G]\models (p[[T_A]]\cap p[[S_A]]=\emptyset)\wedge (p[[T_A]]\cup p[[S_A]]=(2^\omega)^{V[G]})
\]
\item 
Any two Polish spaces $X,Y$ of the same cardinality are Borel isomorphic \cite[Thm. 15.6]{kechris:descriptive}. 
\item 
Any Polish space is Borel isomorphic to a Borel subset of $[0;1]^\omega$ \cite[Thm. 4.14]{kechris:descriptive}, hence also to a Borel subset of $2^\omega$ (by the previous item).
 \item
Given $\phi:\mathbb{N}\to\mathbb{N}$, $\prod_{n\in\omega}2^{\phi(n)}$ is Polish
(it is actually homemomorphic to the union of $2^\omega$ with a countable Hausdorff space) 
\cite[Thm. 6.4, Thm. 7.4]{kechris:descriptive}.
\end{enumerate}

Hence it is not restrictive to focus just on universally Baire subsets of $2^\omega$ and of its
countable products, which is what we will do in the sequel.

\begin{notation}\label{not:notUBsetsVG}
Given $G$ a $V$-generic filter for some forcing $P\in V$, $A\in \UB^{V[G]}$ and
$H$ $V[G]$-generic filter for some forcing $Q\in V[G]$, 
\[
A^{V[G][H]}=\bp{r\in (2^\omega)^{V[G][H]}: V[G][H]\models r\in p[[T_A]]},
\]
where $(T_A,S_A)\in V[G]$ is any pair of trees as given in item \ref{itm1:charUBsets} above
such that $p[[T_A]]=A$ holds in $V[G]$,
and $(T_A,S_A)$ project to complements in $V[G][H]$.
\end{notation}

\subsection{Generic absoluteness for second order number theory}\label{subsec:genabssecnumth}



We decide to include a full proof of Woodin's generic absoluteness results
for second order number theory we used, it 
follows readily from \cite[Thm. 3.1.2]{STATLARSON} and the assumptions that there exists
class many Woodin limits of Woodin, we reduce these large cardinal assumptions to the existence of class many Woodin cardinals, while providing an alternative approach
to the proof of some of Woodin's result.
The theorem below is an improvement of  \cite[Thm. 3.1]{VIAMMREV}.

\begin{theorem}\label{thm:genabshomega1}
Assume in $V$ there are class many Woodin cardinals. Let $\mathcal{A}\in V$ be a 
family of universally Baire sets of $V$ and $\tau_{\mathcal{A}}=\tau_{\ST}\cup\mathcal{A}$. Let $G$ be $V$-generic for some forcing notion $P\in V$.

Then 
\[
(H_{\omega_1},\tau_{\mathcal{A}}^V)\prec(H_{\omega_1}^{V[G]},\tau_{\ST}^{V[G]},A^{V[G]}:A\in\mathcal{A}).
\]
\end{theorem}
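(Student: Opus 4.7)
The plan is to apply Tarski--Vaught's criterion via an induction on $\tau_{\mathcal{A}}$-formula complexity. The central device is to associate, recursively within $V$, to every $\tau_{\mathcal{A}}$-formula $\phi(\vec{x})$ a universally Baire set $B_\phi\in\UB^V$ (working modulo the coding of hereditarily countable sets by reals given in Section \ref{subsec:secordequiv} for $\kappa=\omega$) with two properties: $B_\phi^V$ equals $\{\vec{a}\in H_{\omega_1}^V : V\models\phi(\vec{a})\}$, and $B_\phi^{V[G]}$ equals $\{\vec{a}\in H_{\omega_1}^{V[G]} : V[G]\models\phi(\vec{a})\}$, where in the second equality quantifiers are interpreted over $H_{\omega_1}^{V[G]}$ and the predicates $A\in\mathcal{A}$ via their reinterpretations $A^{V[G]}$. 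Once both properties are established, the elementary embedding follows from the atomic case below, since membership in any fixed UB set transfers absolutely between $V$ and $V[G]$.

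The atomic case, which serves both as the base of the induction and as the bridge between the two structures, is handled via tree representations. A $\Delta_0$-formula translates to a recursive predicate on codes, hence is trivially universally Baire and absolute. For an atomic formula $A(\vec{r})$ with $A\in\mathcal{A}$ and $\vec{r}\in H_{\omega_1}^V$, I would fix in $V$ a pair of trees $(T_A,S_A)$ whose projection in $V$ gives $A$ and which project to complements in $V[G]$; these exist by the defining property of universally Baire sets recalled after Theorem \ref{thm:UBsetsgenabs}. Upward absoluteness of ill-foundedness together with the $V[G]$-complementarity of $(T_A,S_A)$ yield $\vec{r}\in A^V$ iff $\vec{r}\in p[[T_A]]^V$ iff $\vec{r}\in p[[T_A]]^{V[G]}$ iff $\vec{r}\in A^{V[G]}$. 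Boolean combinations close under finite unions, complementation and finite operations on the associated tree pairs, which delivers both target properties at the boolean inductive step by the same argument.

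The principal obstacle is the existential clause. For $\phi$ of the form $\exists y\,\theta(y,\vec{x})$, I would define $B_\phi$ as the projection on the first coordinates of $B_\theta$, invoking closure of $\UB^V$ under projections, which is available from class many Woodin cardinals by Theorem \ref{thm:UBsetsgenabs}. The property $B_\phi^V=\{\vec{a}:\exists y\in H_{\omega_1}^V\,V\models\theta(y,\vec{a})\}$ is immediate by the inductive hypothesis on $\theta$ and the fact that projections of trees in $V$ compute existentials over reals of $V$, hence over $H_{\omega_1}^V$ after decoding. The analogous property in $V[G]$ --- that the $V$-trees representing $B_\theta$ compute the correct projections in $V[G]$ and that these match $\{\vec{a}:\exists y\in H_{\omega_1}^{V[G]}\,V[G]\models\theta(y,\vec{a})\}$ --- requires choosing the tree pair for $B_\theta$ of rank below a Woodin cardinal $\delta>|P|$ in $V$, so that the complementarity of projections persists in $V[G]$. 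This is exactly the content of the Woodin-tree machinery underlying the projective closure of $\UB^V$, and is where the large-cardinal hypothesis is essential; I expect the verification of this commutation between $\phi\mapsto B_\phi$ and the reinterpretation $A\mapsto A^{V[G]}$ to be the delicate point of the argument. Once it is in place, Tarski--Vaught yields the full elementary embedding.
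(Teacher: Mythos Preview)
Your proposal is correct but takes a genuinely different route from the paper. You build, by recursion on formulae, a universally Baire code $B_\phi$ for each $\tau_{\mathcal{A}}$-formula and argue that the canonical extension $B_\phi^{V[G]}$ computes the same formula over $H_{\omega_1}^{V[G]}$; the existential step then rests on the fact that canonical extension commutes with projection, which (as you rightly flag) is the substantive content of the projective closure of $\UB$ under class many Woodin cardinals and is where the real work hides. The paper instead proves by induction on $n$ the stronger assertion that $H_{\omega_1}^{V[G]}\prec_n H_{\omega_1}^{V[G][H]}$ for \emph{every} pair of successive generic extensions, via a sandwich argument: given $V[\bar G]\subseteq V[\bar G][\bar H]$, pick a Woodin $\gamma$ above the forcing and force with the countable stationary tower $(\tow{T}^{\omega_1}_\gamma)^{V[\bar G]}$ to obtain $V[\bar G][K]\supseteq V[\bar G][\bar H]$ with $H_{\omega_1}^{V[\bar G]}\prec H_{\omega_1}^{V[\bar G][K]}$ fully elementary in one stroke (by \cite[Thm. 2.7.7, Thm. 2.7.8]{STATLARSON}); the inductive hypothesis on both inclusions then lets a short diagram chase push $\Sigma_n$ up to $\Sigma_{n+1}$. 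Your approach is more explicit (it literally exhibits the $\UB$ witness for each formula) but front-loads the delicate commutation lemma; the paper's approach packages all tree machinery into the single black box of the tower embedding and never has to track how projection interacts with reinterpretation.
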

\begin{proof}
%
%
We proceed by induction on $n$ to prove the following stronger assertion

\begin{claim}
Whenever $G$ is $V$-generic for some forcing notion $P$ in $V$ 
and $H$ is $V[G]$-generic for some forcing notion $Q$ in $V[G]$
\[
(H_{\omega_1}^{V[G]},\tau_{\ST}^{V[G]},A^{V[G]}: A\in\mathcal{A})\prec_n 
(H_{\omega_1}^{V[G][H]},\tau_{\ST}^{V[G][H]},A^{V[G][H]}: A\in\mathcal{A}).
\]
\end{claim}
\begin{proof}
It is not hard to 
check that for all  $A\in \mathcal{A}$, $A^{V[G]}=A^{V[G][H]}\cap V[G]$ 
(choose in $V$ a pair of trees $(T,S)$ such that $A=p[[T]]$ and the pair $(T,S)$ projects to complements in 
$V[G][H]$, and therefore also in $V[G]$).
Therefore  
$(H_{\omega_1}^{V[G]},\tau_{\ST}^{V[G]},A^{V[G]}: A\in\mathcal{A})$ is a
$\tau_{\mathcal{A}}$-substructure of  $(H_{\omega_1}^{V[G][H]},\tau_{\ST}^{V[G][H]},A^{V[G][H]}: A\in\mathcal{A})$.

This proves the base case of the induction.

We prove the successor step.

Assume that for any $G$ $V$-generic for some forcing $P\in V$ and $H$ $V[G]$-generic for some forcing $Q\in V[G]$
\[
(H_{\omega_1}^{V[G]},\tau_{\ST}^{V[G]},A^{V[G]}: A\in\mathcal{A})\prec_n 
(H_{\omega_1}^{V[G][H]},\tau_{\ST}^{V[G][H]},A^{V[G][H]}: A\in\mathcal{A}).
\]	
Fix $\bar{G}$ and $\bar{H}$ as in the assumptions of the Claim as witnessed by forcings $\bar{P}\in V$ and 
$\bar{Q}\in V[\bar{G}]$. 

We want to show that 
\[
		 (H_{\omega_1}^{V[\bar{G}]},\tau_{\ST}^{V[\bar{G}]},A^{V[\bar{G}]}: A\in\mathcal{A})\prec_{n+1} 
		 (H_{\omega_1}^{V[\bar{G}][\bar{H}]},\tau_{\ST}^{V[\bar{G}][\bar{H}]},A^{V[\bar{G}][\bar{H}]}: A\in\mathcal{A}).
\]
	
Let $\gamma$ be a Woodin cardinal of $V$ such that $\bar{P}\ast\dot{\bar{Q}}\in V_\gamma$ 
(where $\dot{\bar{Q}}\in V^P$ is chosen so that $\dot{\bar{Q}}_G=\bar{Q}$).

Then $\gamma$ is Woodin also in $V[\bar{G}]$. Let $K$ be $V[\bar{G}]$-generic for\footnote{$\tow{T}^{\omega_1}_\gamma$ denotes here the countable tower of height $\gamma$ denoted as $\mathbb{Q}_{<\gamma}$ in \cite[Section 2.7]{STATLARSON}.}  
$(\tow{T}^{\omega_1}_\gamma)^{V[\bar{G}]}$
with $\bar{H}\in V[K]$, so that $V[\bar{G}][K]=V[\bar{G}][\bar{H}][\bar{K}]$ for some 
$\bar{K}\in V[\bar{G}][K]$.	
	
		 Hence we have the following diagram:
		\[
			\begin{tikzpicture}[xscale=1.3,yscale=-0.6]
				\node (A0_0) at (0, 0) {$(H_{\omega_1}^{V[\bar{G}]},\tau_{\ST}^{V[\bar{G}]},A^{V[\bar{G}]}: A\in\mathcal{A})$};
				\node (A0_2) at (6, 0) {$(H_{\omega_1}^{V[\bar{G}][K]},\tau_{\ST}^{V[\bar{G}][K]},
				A^{V[\bar{G}][K]}: A\in\mathcal{A})$};
				\node (A1_1) at (3, 3) {$(H_{\omega_1}^{V[\bar{G}][\bar{H}]},
				\tau_{\ST}^{V[\bar{G}][\bar{H}]},A^{V[\bar{G}][\bar{H}]}: A\in\mathcal{A})$};
				\path (A0_0) edge [->]node [auto] {$\scriptstyle{\Sigma_\omega}$} (A0_2);
				\path (A1_1) edge [->]node [auto,swap] {$\scriptstyle{\Sigma_{n}}$} (A0_2);
				\path (A0_0) edge [->]node [auto,swap] {$\scriptstyle{\Sigma_{n}}$} (A1_1);
			\end{tikzpicture}
		\]
		obtained by inductive hypothesis applied both on $V[\bar{G}]$, $V[\bar{G}][\bar{H}]$ and on 
		$V[\bar{G}][\bar{H}]$, $V[\bar{G}][\bar{H}][\bar{K}]$, and using the fact that 
		$(H_{\omega_1}^{V[\bar{G}][K]},\tau_{\UB^{V[\bar{G}]}}^{V[\bar{G}][K]})$ 
		is a fully elementary 
		superstructure of $(H_{\omega_1}^{V[\bar{G}]},\tau_{\UB^{V[\bar{G}]}}^{V[\bar{G}]})$ \cite[Thm. 2.7.7, Thm. 2.7.8]{STATLARSON}.

		Let $\phi \equiv \exists x \psi(x)$ be any $\Sigma_{n+1}$ formula for $\tau_{\mathcal{A}}$
		with parameters in $H_{\omega_1}^{V[\bar{G}]}$.
		First suppose that $\phi$ holds in $(H_{\omega_1}^{V[\bar{G}]},\tau_{\ST}^{V[\bar{G}]},A^{V[\bar{G}]}: A\in\mathcal{A})$, 
		and fix $\bar{a} \in V[\bar{G}]$ such that $\psi(\bar{a})$ holds
		in $(H_{\omega_1}^{V[\bar{G}]},\tau_{\ST}^{V[\bar{G}]},A^{V[\bar{G}]}: A\in\mathcal{A})$. 
		Since 
		\[
		(H_{\omega_1}^{V[\bar{G}]},\tau_{\ST}^{V[\bar{G}]},A^{V[\bar{G}]}: A\in\mathcal{A})\prec_n
		(H_{\omega_1}^{V[\bar{G}][\bar{H}]},\tau_{\ST}^{V[\bar{G}][\bar{H}]},A^{V[\bar{G}][\bar{H}]}: A\in\mathcal{A}),
		\]
		we conclude that $\psi(\bar{a})$ holds
		in $(H_{\omega_1}^{V[\bar{G}][\bar{H}]},\tau_{\ST}^{V[\bar{G}][\bar{H}]},A^{V[\bar{G}][\bar{H}]}: A\in\mathcal{A})$, 
		hence so does
		$\phi$.
		
		Now suppose that $\phi$ holds in 
		$(H_{\omega_1}^{V[\bar{G}][\bar{H}]},\tau_{\ST}^{V[\bar{G}][\bar{H}]},A^{V[\bar{G}][\bar{H}]}: A\in\mathcal{A})$
		 as witnessed by $\bar{a} \in H_{\omega_1}^{V[\bar{G}][\bar{H}]}$. 
		 
		 Since 
		 \[
		 (H_{\omega_1}^{V[\bar{G}][\bar{H}]},\tau_{\ST}^{V[\bar{G}][\bar{H}]},A^{V[\bar{G}][\bar{H}]}: A\in\mathcal{A})
		 \prec_n
		 (H_{\omega_1}^{V[\bar{G}][K]},\tau_{\ST}^{V[\bar{G}][K]},A^{V[\bar{G}][K]}: A\in\mathcal{A}),
		 \]
		 it follows that $\psi(\bar{a})$ holds in 
		 $(H_{\omega_1}^{V[\bar{G}][K]},\tau_{\ST}^{V[\bar{G}][K]},A^{V[\bar{G}][K]}: A\in\mathcal{A})$, 
		 hence so does $\phi$. 
		 Since 
		 \[
		 (H_{\omega_1}^{V[\bar{G}]},\tau_{\ST}^{V[\bar{G}]},A^{V[\bar{G}]}: A\in\mathcal{A})\prec
		 (H_{\omega_1}^{V[\bar{G}][K]},\tau_{\ST}^{V[\bar{G}][K]},A^{V[\bar{G}][K]}: A\in\mathcal{A}),
		 \]
		the formula $\phi$ holds also in $(H_{\omega_1}^{V[\bar{G}]},\tau_{\ST}^{V[\bar{G}]},A^{V[\bar{G}]}: A\in\mathcal{A})$.
		
		Since $\phi$ is arbitrary, this shows that 
		\[
		 (H_{\omega_1}^{V[\bar{G}][\bar{H}]},\tau_{\ST}^{V[\bar{G}]},A^{V[\bar{G}]}: A\in\mathcal{A})\prec_{n+1}
		 (H_{\omega_1}^{V[\bar{G}][\bar{H}]},\tau_{\ST}^{V[\bar{G}][\bar{H}]}, A^{V[\bar{G}][\bar{H}]}: A\in\mathcal{A}),
		 \]
		concluding the proof of the inductive step for $\bar{G}$ and $\bar{H}$.
		
		Since we have class many Woodin,
		this argument is modular in $\bar{G},\bar{H}$ as in the assumptions of the inductive step,
		because we can always find some Woodin cardinal $\gamma$
		of $V$ which remains Woodin in $V[\bar{G}]$ and is of size larger than the poset 
		in $V[\bar{G}]$ for which 
		$\bar{H}$ is $V[\bar{G}]$-generic.
		The proof of the inductive step is completed.
		\end{proof}
%

\end{proof}


\section{Further results} \label{sec:furtherresults}

We introduce without a few comments the results whose proof is defered to a second paper, together with the relevant terminology and definitions. 
The following supplements
Notation \ref{not:keynotation}.

\begin{Notation}\label{not:keynotation-2}
\emph{}

\begin{itemize}
\item
$\tau_{\NS_{\omega_1}}$ is the signature $\tau_\ST\cup\bp{\omega_1}\cup\bp{\NS_{\omega_1}}$ with $\omega_1$ a constant symbol, $\NS_{\omega_1}$ a unary predicate symbol.
\item
$T_{\NS_{\omega_1}}$ is the $\tau_{\NS_{\omega_1}}$-theory
given by $T_\ST$ together with the axioms
\[
\omega_1\text{ is the first uncountable cardinal},
\]
\[
\forall x\;[(x\subseteq\omega_1\text{ is non-stationary})\leftrightarrow\NS_{\omega_1}(x)].
\]

\item
$\ZFC^-_{\NS_{\omega_1}}$ is the $\tau_{\NS_{\omega_1}}$-theory 
\[
\ZFC^-_\ST+T_{\NS_{\omega_1}}.
\]
\item
Accordingly we define 
$\ZFC_{\NS_{\omega_1}}$.
\end{itemize}
\end{Notation}

\begin{Theorem}\label{Thm:mainthm-7}
Let $\mathcal{V}=(V,\in)$ be a model of 
\[
\ZFC+\maxUB+\emph{there is a supercompact cardinal and class many Woodin cardinals},
\]
and $\UB$ denote the family of universally Baire sets in 
$V$.

TFAE
\begin{enumerate}
\item\label{thm:char(*)-modcomp-1}
$(V,\in)$ models $\stUB$;
\item\label{thm:char(*)-modcomp-2}
$\NS_{\omega_1}$ is precipitous\footnote{See \cite[Section 1.6, pag. 41]{STATLARSON}  for a definition of precipitousness and a discussion of its properties. A key observation is that $\NS_{\omega_1}$ being precipitous is independent of $\mathsf{CH}$ (see for example \cite[Thm. 1.6.24]{STATLARSON}), while $\stUB$ entails $2^{\aleph_0}=\aleph_2$ (for example by the results of \cite[Section 6]{HSTLARSON}).

Another key point is that we stick to the formulation of $\Pmax$ as in \cite{HSTLARSON} so to 
be able in its proof to quote verbatim from \cite{HSTLARSON} all the relevant results on $\Pmax$-preconditions we will use.
It is however possible to  develop $\Pmax$ focusing on Woodin's countable tower rather than 
on the precipitousness of $\NS_{\omega_1}$ to develop the notion of $\Pmax$-precondition. Following this approach in
all its scopes, one should be able to reformulate Thm. \ref{Thm:mainthm-7}(\ref{thm:char(*)-modcomp-2}) 
omitting the request that
$\NS_{\omega_1}$ is precipitous. We do not explore this venue any further neither here nor in the sequel of this paper.} and
the $\tau_{\NS_{\omega_1}}\cup\UB$-theory of $V$ has as model companion the
$\tau_{\NS_{\omega_1}}\cup\UB$-theory of $H_{\omega_2}$.
\end{enumerate}
\end{Theorem}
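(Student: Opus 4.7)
The plan is to imitate the proof of Theorem \ref{Thm:mainthm-1} in the enriched signature $\tau_{\NS_{\omega_1}}\cup\UB$ with $\kappa=\omega_1$, substituting $\stUB$ for the role played in Theorem \ref{Thm:mainthm-3} by Woodin's generic absoluteness for second order arithmetic. The two implications split asymmetrically: $(1)\Rightarrow(2)$ will be a direct adaptation of Section \ref{sec:Hkappa+}, while $(2)\Rightarrow(1)$ must extract $\stUB$ from the model-companion hypothesis via a Kaiser-hull argument.

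For $(1)\Rightarrow(2)$, precipitousness is immediate because $\stUB$ includes saturation of $\NS_{\omega_1}$ (via $(*)$), which implies precipitousness. For the model-companionship half I would run Theorem \ref{thm:modcompHkappa+} in the signature $\tau_{\NS_{\omega_1}}\cup\UB$, using the $\WFE_{\omega_1}/\Cod_{\omega_1}$ coding of Theorem \ref{thm:keypropCod} to translate the first order theory of $H_{\omega_2}$ in the enriched signature into a theory about subsets of $\pow{\omega_1^{<\omega}}$ with atomic predicates for $\NS_{\omega_1}$ and each $A\in\UB$; this shows that the $\tau_{\NS_{\omega_1}}\cup\UB$-theory of $H_{\omega_2}$ is model complete. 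One then applies Lemma \ref{fac:proofthm1-2} with $T_0$ the $\tau_{\NS_{\omega_1}}\cup\UB$-theory of $H_{\omega_2}^V$ and $T$ the $\tau_{\NS_{\omega_1}}\cup\UB$-theory of $V$: the hypothesis that for every complete $S\supseteq T$ some model of $T_0$ realizes $S_\forall$ follows by observing, via Lemma \ref{lem:levyabsHkappa+}, that $H_{\omega_2}^V\prec_1 V$ in this signature so $H_{\omega_2}^V$ itself models $T_\forall$, while the generic invariance of the $\Pi_1$-theory of $V$ granted by $\stUB$ and the large cardinal assumptions lets us absorb the universal sentences compatible with $T$.

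For $(2)\Rightarrow(1)$, the idea is to formalize $\stUB$ as a $\Pi_2$-sentence $\phi_{\stUB}$ in $\tau_{\NS_{\omega_1}}\cup\UB$ and push it from the model companion down into $V$. Saturation of $\NS_{\omega_1}$ is $\Pi_2$ in this signature in a straightforward way; the existence of an $L(\UB)$-generic filter for $\Pmax$ can be encoded as a universal assertion that every $\Pmax$-precondition $p$ admits an extension meeting the ``$A$-indexed'' dense subset of $\Pmax$, as $A$ ranges over $\UB$, using that the $L(\UB)$-dense subsets of $\Pmax$ are uniformly parametrized by single universally Baire predicates. By Woodin's $\Pmax$ theory, Asper\`o--Schindler's theorem $\MM^{++}\Rightarrow(*)$, and the supercompact plus class many Woodin hypothesis, $\phi_{\stUB}$ is provably forcible over $V$; combined with the invariance of the $\Pi_1$-theory of $V$ in $\tau_{\NS_{\omega_1}}\cup\UB$ (Theorem \ref{Thm:mainthm-8}), this yields that $\phi_{\stUB}$ is consistent with the universal fragment of every complete extension of $T$. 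By Fact \ref{fac:charkaihullnonpi1comp} and Theorem \ref{thm:uniqmodcompan}, $\phi_{\stUB}$ then lies in the $\tau_{\NS_{\omega_1}}\cup\UB$-theory of $H_{\omega_2}^V$, and Lemma \ref{lem:levyabsHkappa+} lifts it from $H_{\omega_2}^V$ to $V$.

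The hardest part will be pinning down the $\Pi_2$-coding of $\stUB$ in $\tau_{\NS_{\omega_1}}\cup\UB$ and checking that its truth in $H_{\omega_2}^V$ genuinely transfers to $V$: the natural formulation of $\stUB$ refers to the external class $\UB$ and the model $L(\UB)$, neither of which is directly visible inside $H_{\omega_2}$. The coding must exploit that each $A\in\UB$ is $\Delta_1$-definable in $V$ from its tree representation and that the relevant $L(\UB)$-genericity conditions are parametrized uniformly by single $\UB$-predicates, so that ``meeting every $L(\UB)$-dense subset of $\Pmax$'' becomes the universal assertion ``for every $A\in\UB$ and every $p\in\Pmax$ there is $q\leq p$ lying in the $A$-indexed dense set''. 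Once this coding is achieved, the rest of the argument mechanically follows the pattern of Theorems \ref{Thm:mainthm-3} and \ref{Thm:mainthm-5}, with Asper\`o--Schindler supplying the forcibility input that was provided by Woodin's absoluteness there.
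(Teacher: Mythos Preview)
The paper does not prove this theorem: it is stated in Section~\ref{sec:furtherresults} with the proof explicitly deferred to a sequel. So there is no paper proof to compare against, but your proposal has a genuine gap.

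Your $(1)\Rightarrow(2)$ direction proposes to ``run Theorem~\ref{thm:modcompHkappa+} in the signature $\tau_{\NS_{\omega_1}}\cup\UB$''. This does not go through. The proof of Theorem~\ref{thm:modcompHkappa+} hinges on the predicates $S_{\theta_\phi}\in\sigma_\kappa$: for \emph{every} $\tau_\ST$-formula $\phi$, the translated formula $\theta_\phi$ (defining a subset of $\WFE_\kappa^n\subseteq\pow{\kappa^{<\omega}}^n$) is rendered atomic by the symbol $S_{\theta_\phi}$, and that is precisely what collapses quantifier complexity via Robinson's test. In $\tau_{\NS_{\omega_1}}\cup\UB$ you have exactly one predicate on $\pow{\omega_1}$, namely $\NS_{\omega_1}$, together with predicates for universally Baire subsets of $\pow{\omega}$ --- not for arbitrary definable subsets of $\pow{\omega_1^{<\omega}}$. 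The map $\Cod_{\omega_1}$ still exists, but the sets $S_{\theta_\phi}$ you need to invoke are simply not atomic in this signature. Remark~2.5 makes the requirement explicit: the method of Section~\ref{sec:Hkappa+} needs the family of added predicates to be subsets of $\pow{\kappa}^n$ closed under definability and under $\Cod_\kappa$; $\UB\cup\{\NS_{\omega_1}\}$ satisfies neither for $\kappa=\omega_1$.

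This is exactly the obstruction the paper flags in Remark~\ref{Rem:keyrem1}(\ref{Rem:keyrem1-6}) and in the footnote preceding Definition~\ref{def:modcompl}: for $\sigma_{\omega,\NS_{\omega_1}}$ (and a fortiori for $\tau_{\NS_{\omega_1}}\cup\UB$) one cannot deduce model completeness of the theory of $H_{\omega_2}$ from the coding machinery alone. The model completeness in Theorem~\ref{Thm:mainthm-7} must instead come from the $\Pmax$ analysis under $\stUB$: one needs that under $(*)$-$\UB$ every element of $H_{\omega_2}$ is captured (modulo $\NS_{\omega_1}$ and $\UB$ data) by a $\Pmax$-precondition, and that the iteration theory for $\Pmax$-preconditions supplies the required $\Sigma_1$-witnesses. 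That is substantial $\Pmax$ technology (and is where the precipitousness hypothesis in item~(\ref{thm:char(*)-modcomp-2}) is used), not a replay of Section~\ref{sec:Hkappa+}.

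Your $(2)\Rightarrow(1)$ sketch is closer to the expected shape, and your identification of the hard step --- expressing $\stUB$ by a $\Pi_2$-sentence for $\tau_{\NS_{\omega_1}}\cup\UB$ whose truth in $H_{\omega_2}$ reflects to $V$ --- is correct; that coding again leans on $\maxUB$ and the $\Pmax$ machinery rather than on anything in the present paper.
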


Here is the definition of $\maxUB$ and $\stUB$:

\begin{Definition}\label{Keyprop:maxUB}
$\maxUB$: 
There are class many Woodin cardinals in $V$, and  for all 
$G$ $V$-generic for some forcing notion $P\in V$:
\begin{enumerate}
\item\label{Keyprop:maxUB-1}
Any subset of $(2^\omega)^{V[G]}$ definable in $(H_{\omega_1}^{V[G]}\cup\mathsf{UB}^{V[G]},\in)$ 
is universally Baire in $V[G]$.
\item\label{Keyprop:maxUB-2}
Let $H$ be $V[G]$-generic for some forcing notion $Q\in V[G]$. 
Then\footnote{Elementarity is witnessed via the map defined by $A\mapsto A^{V[G][H]}$ for
$A\in \UB^{V[G]}$ and the identity on
$H_{\omega_1}^{V[G]}$ (See Notation \ref{not:notUBsetsVG}
for the definition of $A^{V[G][H]}$).}:
\[
(H_{\omega_1}^{V[G]}\cup\UB^{V[G]},\in) \prec 
(H_{\omega_1}^{V[G][H]}\cup\UB^{V[G][H]},\in).
\] 
\end{enumerate}
\end{Definition}
$\maxUB$ is a form of sharp for the universally Baire sets
(a slight weakening of the conclusion of \cite[Thm. 4.17]{STATLARSON}).
It holds in any forcing extension of $V$ where a supercompact of $V$ becomes countable.
We will comment in details on $\maxUB$ in the sequel of this paper.

See \cite{HSTLARSON} for a definition of $\Pmax$ and \cite[Section 1.6, pag. 39]{STATLARSON} 
for a definition of saturated ideal on $\omega_1$.
\begin{Definition}
Let $\mathcal{A}$ be a family of dense subsets of $\Pmax$.
\begin{itemize}
\item
$\stA$ holds if $\NS_{\omega_1}$ is saturated and 
there exists a filter $G$ on $\Pmax$ meeting all the dense sets in 
$\mathcal{A}$.
\item
$\stUB$ holds
if $\NS_{\omega_1}$ is saturated and there exists an $L(\UB)$-generic filter $G$ on $\Pmax$. 
\end{itemize}
\end{Definition}

Woodin's definition of $(*)$ \cite[Def. 7.5]{HSTLARSON}
is equivalent to $\stA+$\emph{there are class many Woodin cardinals} 
for $\mathcal{A}$ the family of 
dense subsets of $\Pmax$ existing in $L(\mathbb{R})$.


\begin{Theorem}\label{Thm:mainthm-8}
Assume $V$ models that there are class many Woodin cardinals and $\UB$ is the family of universally Baire sets in $V$.
Then the $\Pi_1$-theory of $V$ for the language $\tau_{\NS_{\omega_1}}\cup\UB$ is invariant under set sized forcings.
\end{Theorem}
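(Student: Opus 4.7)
The plan is to prove the equivalence $V\models\psi\iff V[G]\models\psi$ for every $\Pi_1$-sentence $\psi=\forall\vec x\,\phi(\vec x)$ in $\tau_{\NS_{\omega_1}}\cup\UB$ and every $V$-generic $G$ for some $P\in V$, despite the failure of $V\sqsubseteq V[G]$ in this signature documented in Remark~\ref{rmk:keyrembis}. I would first dispose of the unproblematic atoms appearing in $\phi$: bounded $\in$-formulas are $\Delta_0$-absolute between $V$ and $V[G]$, and for each $A\in\UB^V$ the tree-pair characterization together with Notation~\ref{not:notUBsetsVG} gives $V\models A(\vec a)\iff V[G]\models A^{V[G]}(\vec a)$ for all $\vec a\in V$. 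The genuinely problematic atoms are $\NS_{\omega_1}(x)$ and those mentioning the constant $\omega_1$, whose truth may flip between $V$ and $V[G]$ when $P$ is not stationary set preserving or collapses $\omega_1$.

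My strategy is to bridge $V$ and $V[G]$ through a common forcing extension $W$ in which the sensitive atoms become coded by universally Baire data. Choose a Woodin cardinal $\delta$ of $V$ with $|P|<\delta$; then $\delta$ remains Woodin in $V[G]$, and the countable stationary tower $\mathbb{Q}_{<\delta}$ is pre-saturated in both. Using the standard absorption properties of the countable tower at a Woodin cardinal (cfr.\ \cite[Thm. 2.7.7, Thm. 2.7.8]{STATLARSON}), one finds $K$ such that $W=V[G][K]$ is simultaneously a $V$-generic extension by an iteration of $P$ with a countable tower, so that generic elementary embeddings $j_V\colon V\to M_V$ and $j_{V[G]}\colon V[G]\to M_{V[G]}$ both sit inside $W$, with $\crit(j_V)=\omega_1^V$, $\crit(j_{V[G]})=\omega_1^{V[G]}$, and $j_V(\omega_1^V)=j_{V[G]}(\omega_1^{V[G]})=\omega_1^W$.

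The key observation is then that for $S\in V$ with $S\subseteq\omega_1^V$, $S$ is stationary in $V$ iff $\omega_1^V\in j_V(S)$ in $W$, and the analogous equivalence holds in $V[G]$ via $j_{V[G]}$; since both $\omega_1^V$ and $\omega_1^{V[G]}$ are countable in $W$, these criteria can be phrased as atomic $\tau_\ST\cup\UB^W$-predicates on reals of $W$ coding the relevant subsets. Combining this with Levy's absoluteness (Lemma~\ref{lem:levyabsHkappa+}) to reduce counterexamples to $\psi$ to $H_{\omega_2}$, and then with the collapse to $H_{\omega_1}^W$, any failure of $\psi$ in $V$ or in $V[G]$ becomes an existential $\tau_\ST\cup\UB^W$-statement about $H_{\omega_1}^W$. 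Two applications of generic absoluteness for $H_{\omega_1}$ with UB predicates (Thm~\ref{thm:genabshomega1}) then transfer such a $\Sigma_1$-witness between $V$, $V[G]$, and $W$, establishing the desired invariance.

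The main obstacle will be making the encoding of stationarity and of $\omega_1$ in $W$ sufficiently canonical so that the two applications of generic absoluteness are compatible and actually produce a witness in each of $V$ and $V[G]$. A secondary subtlety lies in the quantifier reduction: the $\forall\vec x$ of $\psi$ ranges over all sets, but since the atoms of $\tau_{\NS_{\omega_1}}\cup\UB$ are either trivial or $\Delta_0$-absolute outside $H_{\omega_2}$, any would-be counterexample to $\psi$ can be absorbed into $H_{\omega_2}$ and then coded as a real in $W$. Carrying this out rigorously, and precisely tracking how $\NS_{\omega_1}$ transforms along $j_V$ and $j_{V[G]}$, is the heart of the argument and justifies deferring the complete proof to the sequel of this paper.
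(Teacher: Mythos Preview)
The paper does not prove this theorem. Theorem~\ref{Thm:mainthm-8} is stated in Section~\ref{sec:furtherresults} (``Further results''), which opens by saying explicitly that the proofs of the results listed there are ``defered to a second paper''. There is no argument in the present article to compare your proposal against.

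Your sketch is a reasonable outline of the kind of argument one expects: handling the $\tau_\ST$- and $\UB$-atoms by absoluteness and tree representations, and then using the countable stationary tower at a Woodin cardinal above $|P|$ to absorb both $V$ and $V[G]$ into a common extension where the $\NS_{\omega_1}$-predicate and the constant $\omega_1$ can be recoded via the generic embedding. You correctly flag the delicate point---making the stationary-tower coding of $\NS_{\omega_1}$ canonical enough that the two passages through Theorem~\ref{thm:genabshomega1} cohere---and you yourself conclude by deferring the full proof, which matches the paper's own stance. Since the paper gives no proof here, there is nothing further to adjudicate; your outline is consistent with the hints scattered in Remark~\ref{rmk:keyrembis} and the proof of Theorem~\ref{thm:genabshomega1}.
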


\begin{Notation}
\emph{}

\begin{itemize}
\item Given a family $\mathcal{A}$ of predicate symbols:
\begin{itemize}
\item
$\sigma_{\mathcal{A}}=\tau_{\ST}\cup\mathcal{A}$,
\item
$\sigma_{\mathcal{A},\NS_{\omega_1}}=\tau_{\NS_{\omega_1}}\cup\mathcal{A}$,
\item
$\sigma_\omega$ is $\sigma_{\mathcal{A}}$ for 
$\mathcal{A}=\sigma_\ST$,
\item
$\sigma_{\omega,\NS_{\omega_1}}$ is
$\sigma_{\mathcal{A},\NS_{\omega_1}}$ for 
$\mathcal{A}=\sigma_\ST$.
\end{itemize}

\item
Let $\UB$ denote the family of universally Baire sets, and $L(\UB)$ denote the smallest transitive model of $\ZF$ which contains $\UB$.
\smallskip

$T_\lUB$ is the 
$\sigma_{\omega,\NS_{\omega_1}}$-theory 
given by the axioms
\[
\forall x_1\dots x_n\,[S_\psi(x_1,\dots,x_n)\leftrightarrow 
(\bigwedge_{i=1}^n x_i\subseteq \omega^{<\omega}\wedge \psi^{L(\UB)}(x_1,\dots,x_n))]
\]
as $\psi$ ranges over the $\in$-formulae.

\item
$\ZFC^{*-}_{\lUB}$ is the $\sigma_{\omega}$-theory 
\[
\ZFC^-_{\ST}\cup T_\lUB;
\]

\item
$\ZFC^{*-}_{\lUB,\NS_{\omega_1}}$ is the $\sigma_{\omega,\NS_{\omega_1}}$-theory 
\[
\ZFC^-_{\NS_{\omega_1}}\cup T_\lUB;
\]
\item
Accordingly we define 
$\ZFC_{\NS_{\omega_1}}$, $\ZFC^{*}_{\lUB}$, $\ZFC^*_{\lUB,\NS_{\omega_1}}$.
\end{itemize}
\end{Notation}

\begin{Theorem} \label{Thm:mainthm-6}
Let $T$ be any $\sigma_{\omega,\NS_{\omega_1}}$-theory extending 
\[
\ZFC^*_{\lUB,\NS_{\omega_1}}+\maxUB+\text{ there is a supercompact cardinal and  class many Woodin cardinals}.
\]
Then $T$ has a model companion $T^*$. 

Moreover TFAE for any for any $\Pi_2$-sentence $\psi$ for 
$\sigma_{\omega,\NS_{\omega_1}}$:
\begin{enumerate}[(A)]
\item\label{Thm:mainthm-1A} 
$T^*\vdash \psi$.
\item\label{Thm:mainthm-1B}
 For any complete theory 
\[
S\supseteq T,
\] 
$S_\forall\cup\bp{\psi}$ is consistent.
\item \label{Thm:mainthm-1C}
$T$ proves
\[
\exists P \,(P\text{ is a partial order }\wedge 
\Vdash_P\psi^{\dot{H}_{\omega_2}}).
\]
\item \label{Thm:mainthm-1D}
$T_\forall+\ZFC^*_{\lUB,\NS_{\omega_1}}+\maxUB+\stUB\vdash \psi^{H_{\omega_2}}$.
\item \label{Thm:mainthm-1E}
$T$ proves that 
\[
(\Pmax\Vdash\psi^{\dot{H}_{\omega_2}})^{L(\UB)}.
\]
\end{enumerate}

\end{Theorem}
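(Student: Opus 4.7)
The plan is to follow the same template used for Theorems \ref{Thm:mainthm-3} and \ref{Thm:mainthm-5}, now in the richer signature $\sigma_{\omega,\NS_{\omega_1}}$, by combining an appropriate version of Theorem \ref{Thm:mainthm-1} with the generic absoluteness result Theorem \ref{Thm:mainthm-8} and with the model-companion characterization in terms of $\stUB$ and $\Pmax$ provided by Theorem \ref{Thm:mainthm-7}. The candidate model companion is
\[
T^{*}:=T_\forall\cup\ZFC^{*-}_{\lUB,\NS_{\omega_1}}+\{\text{every set has size }\omega_1\}.
\]
Its model completeness is obtained by repeating verbatim the argument of Theorem \ref{thm:modcompHkappa+}: the extra predicate symbols of $\sigma_{\omega,\NS_{\omega_1}}$ over $\tau_{\ST}$ (both the unary symbol $\NS_{\omega_1}$ and the $S_\phi$ coming from $L(\UB)$) are all interpreted by subsets of $\pow{\omega_1^{<\omega}}$, so the generalized Levy absoluteness Lemma~\ref{lem:levyabsHkappa+} applies and the $\Cod_{\omega_1}$-coding reduces every $\sigma_{\omega,\NS_{\omega_1}}$-formula to a $\Pi_1$-formula provably in $T^{*}$. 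Lemma~\ref{fac:proofthm1-2} then delivers that $T^{*}$ is the model companion of $T$, and via its clause~(\ref{fac:proofthm1-2-c}) it gives the Kaiser-hull equivalence $(A)\Leftrightarrow(B)$.

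For $(A)\Leftrightarrow(C)$, the direction $(A)\Rightarrow(C)$ is immediate using the trivial forcing and Lemma~\ref{lem:levyabsHkappa+}. The reverse direction proceeds exactly along the schema of Remark~\ref{rmk:keyrembis}: given a model $\mathcal{M}$ of $T$ and a complete $S\supseteq T$ satisfied in $\mathcal{M}$, pick $P\in\mathcal{M}$ forcing $\psi^{\dot H_{\omega_2}}$ and set $\mathcal{N}=\mathcal{M}[G]$ for some $\mathcal{M}$-generic $G$. Since $\sigma_{\omega,\NS_{\omega_1}}\subseteq \tau_{\NS_{\omega_1}}\cup\UB^{\mathcal{M}}$, Theorem~\ref{Thm:mainthm-8} applied inside $\mathcal{M}$ yields that $\mathcal{M}\sqsubseteq\mathcal{N}$ as $\sigma_{\omega,\NS_{\omega_1}}$-structures with identical $\Pi_1$-theory, so $S_\forall$ is preserved to $\mathcal{N}$; Lemma~\ref{lem:levyabsHkappa+} in $\mathcal{N}$ gives $H_{\omega_2}^{\mathcal{N}}\prec_1\mathcal{N}$, and therefore $H_{\omega_2}^{\mathcal{N}}$ witnesses $S_\forall+\psi$, producing (B).

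The equivalences $(A)\Leftrightarrow(D)\Leftrightarrow(E)$ are where the main content of the sequel enters. By Theorem~\ref{Thm:mainthm-7}, under $\maxUB+\stUB$ the $\tau_{\NS_{\omega_1}}\cup\UB$-theory of $H_{\omega_2}$ is the model companion of the ambient theory; restricting this equivalence to the subsignature $\sigma_{\omega,\NS_{\omega_1}}$ one gets that a $\Pi_2$-sentence $\psi$ lies in $T^{*}$ if and only if $T_\forall+\ZFC^{*}_{\lUB,\NS_{\omega_1}}+\maxUB+\stUB\vdash \psi^{H_{\omega_2}}$, giving $(A)\Leftrightarrow(D)$. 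For $(D)\Leftrightarrow(E)$, the key input is the by now standard $\Pmax$-analysis (cfr.\ \cite[Thm.~7.3]{HSTLARSON}): in presence of $\maxUB$ and class many Woodin cardinals, the $\Pmax$-extension of $L(\UB)$ is a canonical model of $\stUB$, so $T$ proves $(\Pmax\Vdash \psi^{\dot H_{\omega_2}})^{L(\UB)}$ precisely when $\psi^{H_{\omega_2}}$ is forced by $\stUB$ in every model of $T$.

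The principal obstacle is that the proof of Theorem~\ref{Thm:mainthm-7} itself is deferred to the sequel, and rests on the Asperó--Schindler theorem $\MM^{++}\Rightarrow(*)$ together with a delicate analysis of $\Pmax$-preconditions refined by $\maxUB$-style absoluteness; once Theorems~\ref{Thm:mainthm-1}, \ref{Thm:mainthm-7} and \ref{Thm:mainthm-8} are granted, the argument above is essentially bookkeeping. A secondary subtlety, already flagged in Remark~\ref{Rem:keyrem1}(\ref{Rem:keyrem1-6}), is that one must verify that restricting the $\tau_{\NS_{\omega_1}}\cup\UB$-model companion produced by Theorem~\ref{Thm:mainthm-7} to the smaller signature $\sigma_{\omega,\NS_{\omega_1}}$ still yields the model companion of $T$ there; this is where the choice of $L(\UB)$-definable predicates in $\ZFC^{*-}_{\lUB,\NS_{\omega_1}}$ (rather than arbitrary universally Baire ones) is essential, so that no new $\Pi_1$-sentences destroy forcibility of $\Pi_2$-sentences as in the counterexample with $\mathsf{CH}$ for $\sigma_{\omega_1}$.
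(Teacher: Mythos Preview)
The paper does not prove Theorem~\ref{Thm:mainthm-6}; it is explicitly listed in Section~\ref{sec:furtherresults} among the results whose proof is deferred to the sequel. So there is no proof in the paper to compare against directly, but the paper does explain (Remark~\ref{Rem:keyrem1}(\ref{Rem:keyrem1-6})) why the template you propose cannot work, and your sketch runs into exactly that obstruction.

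The genuine gap is your claim that model completeness of $T^*=T_\forall\cup\ZFC^{*-}_{\lUB,\NS_{\omega_1}}+\{\text{every set has size }\omega_1\}$ follows ``by repeating verbatim the argument of Theorem~\ref{thm:modcompHkappa+}''. That argument hinges on the fact that for the signature $\sigma_\kappa$ the predicate $S_{\theta_\phi}$ is \emph{in the signature} for every $\tau_\ST\cup\{\kappa\}$-formula $\theta_\phi$ with quantifiers ranging over $\pow{\kappa^{<\omega}}$; this is what lets you replace the $\Cod_\kappa$-translation $\theta_\phi$ of an arbitrary formula by an atomic formula and thereby obtain a universal equivalent. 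In $\sigma_{\omega,\NS_{\omega_1}}$ under $T_{\lUB}$, however, the symbols $S_\psi$ name only subsets of $(\pow{\omega^{<\omega}})^n$ defined in $L(\UB)$, together with the single predicate $\NS_{\omega_1}$ on $\pow{\omega_1}$. The $\Cod_{\omega_1}$-translation $\theta_\phi$ of a formula about $H_{\omega_2}$ defines a subset of $(\WFE_{\omega_1})^n\subseteq(\pow{\omega_1^2})^n$, and there is no reason whatsoever for this set to be named by any $S_\psi$ in the signature. You have conflated two distinct facts: that the extra predicates happen to be subsets of $\pow{\omega_1^{<\omega}}^n$ (which suffices for Lemma~\ref{lem:levyabsHkappa+}) and that \emph{every} definable such subset is a predicate (which is what the quantifier-elimination step needs). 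The second fails here, so Robinson's test does not go through by this route.

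This is not a minor patch: the paper states that the proof of Theorem~\ref{Thm:mainthm-6} is ``considerably more involved'' precisely because one must establish model completeness by a different mechanism, via the $\Pmax$-analysis and Theorem~\ref{Thm:mainthm-7}, rather than by the direct coding argument. Your later paragraphs correctly identify Theorem~\ref{Thm:mainthm-7} as the real input, but since your derivation of (A)$\Leftrightarrow$(B) via Lemma~\ref{fac:proofthm1-2} already presupposes the model completeness you have not established, the bookkeeping you describe does not close the circle.
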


We immediately obtain Thm. \ref{Thm:mainthm-4}  as a corollary of 
Thm. \ref{Thm:mainthm-6} and Thm. \ref{Thm:mainthm-8}:
\begin{proof}
Note that every lightface projective set is in 
$L(\UB)$ (since the quantifer defining the set range over 
$\pow{\omega}\subseteq L(\UB)$;
hence we can assume that $\ZFC^*_{\omega,\NS_{\omega_1}}$ is a fragment of 
$\ZFC^*_{\lUB,\NS_{\omega_1}}$: the interpretation of $S_\theta$ according to 
$\ZFC^*_{\omega,\NS_{\omega_1}}$
is the same of $S_{\theta^{\pow{\omega^{<\omega}}}}$ according to 
$\ZFC^*_{\omega,\NS_{\omega_1}}$
which has the same interpretation of $S_{\theta^{\pow{\omega^{<\omega}}}}$ according to 
$\ZFC^*_{\lUB,\NS_{\omega_1}}$.
Therefore a
$\Pi_2$-sentence for $\sigma_\omega$ in the theory $\ZFC^*_{\omega,\NS_{\omega_1}}$ 
can be regarded as
a $\Pi_2$-sentence also for the theory $\ZFC^*_{\lUB,\NS_{\omega_1}}$.
\begin{description}
\item[(\ref{Thm:mainthm-4-1}) implies (\ref{Thm:mainthm-4-3})]
If $P$ forces $\MM^{++}$, by Asper\`o and Schindler's result, $P\Vdash\stUB$; hence 
$P\Vdash\psi^{H_{\omega_2}}$ by (\ref{Thm:mainthm-4-1}).

\item[(\ref{Thm:mainthm-4-3}) implies (\ref{Thm:mainthm-4-2})]
Given some complete $S\supseteq T$, and a model $\mathcal{M}$ of $S$, 
find $\mathcal{N}$ forcing extension of $\mathcal{M}$ which models $\psi^{H_{\omega_2}}$.
By Thm. \ref{Thm:mainthm-8} and Lemma \ref{lem:levyabsHkappa+}, 
$H_{\omega_2}^{\mathcal{N}}\models S_\forall$ and we are done.

\item[(\ref{Thm:mainthm-4-2}) implies (\ref{Thm:mainthm-4-1})]
assume 
$\mathcal{M}$ models
\[
T_\forall+\ZFC^*_{\omega,\NS_{\omega_1}}+\theta_{\bool{SC}}+\stUB;
\]
find $\mathcal{N}$ forcing extension of $\mathcal{M}$ which models
\[
T_\forall+\ZFC^*_{\omega,\NS_{\omega_1}}+\maxUB.
\]
By Thm. \ref{Thm:mainthm-8} and (\ref{Thm:mainthm-4-2}), 
$\psi$ is consistent with the $\Pi_1$-theory of $\mathcal{N}$. 
By the equivalence
of $\ref{Thm:mainthm-1A}$ with $\ref{Thm:mainthm-1D}$ of Thm. \ref{Thm:mainthm-1} applied
to the $\Pi_1$-complete theory of $\mathcal{N}$,
we get that $\mathcal{N}$ models $\psi^{H_{\omega_2}}$ is forcible by $\Pmax$ over $L(\UB)$.
Since all the universally Baire predicates predicates appearing in $\psi$ are projective and lightface definable,
$\mathcal{N}$ models $\psi^{H_{\omega_2}}$ is forcible by $\Pmax$ over $L(\mathbb{R})$.
Since $L(\mathbb{R})^{\mathcal{M}}$ and $L(\mathbb{R})^{\mathcal{N}}$ are elementarily equivalent 
(without any need to appeal to $\maxUB$, but just to $\theta_{\bool{SC}}$ and \cite[Thm. 3.1.2]{STATLARSON}), we get 
that $\mathcal{M}$ models $\psi^{H_{\omega_2}}$ is forcible by $\Pmax$ over $L(\mathbb{R})$.
Since $\mathcal{M}\models\stUB$, we conclude that $\psi^{H_{\omega_2}}$ holds in 
$\mathcal{M}$.
\end{description}
\end{proof}

\bibliographystyle{plain}
	\bibliography{Biblio}

\end{document}